\documentclass[11pt,reqno,oneside]{amsart}
\usepackage{amsfonts,amssymb,amsmath,epic,eepic,float,fullpage}
\usepackage{longtable}
\usepackage[usenames,dvipsnames]{color}
\usepackage{siunitx}
\usepackage{empheq}
\usepackage[mathscr]{euscript}
\usepackage[T1]{fontenc}
\usepackage{mathtools}
\usepackage{comment}
\usepackage{bm}
\usepackage{caption}
\usepackage{hyperref}
\usepackage{enumerate,enumitem}
\usepackage{tikz}
\usetikzlibrary{graphs,patterns,decorations.markings,arrows,matrix}
\usetikzlibrary{calc,decorations.pathmorphing,decorations.pathreplacing,shapes}
\allowdisplaybreaks

\newtheorem{thm}{Theorem}[section]
\newtheorem{lem}[thm]{Lemma}
\newtheorem{prop}[thm]{Proposition}

\newtheorem{cor}[thm]{Corollary}

\theoremstyle{definition}
\newtheorem{defn}[thm]{Definition}

\theoremstyle{remark}
\newtheorem{rmk}[thm]{Remark}

\theoremstyle{remark}
\newtheorem{ex}[thm]{Example}
\long\def\junk#1{}

\def\leq{\leqslant}
\def\geq{\geqslant}

\def\buv{(\mathbf{u},\mathbf{v})}

\colorlet{lgray}{white!85!black}
\colorlet{lblue}{white!85!blue}
\colorlet{lblack}{white!85!black}
\colorlet{lred}{white!85!red}
\colorlet{lgreen}{white!80!green}
\colorlet{dgreen}{black!30!green}
\definecolor{green}{rgb}{0.1,0.8,0.1}
\definecolor{yellow}{rgb}{1.0,0.85,0.25}
\usepackage{ytableau}

\renewcommand\le{\leq}
\renewcommand\ge{\geq}

\newcommand{\ff}{\mathbb{F}}

\numberwithin{equation}{section}

\title{Inhomogeneous $q$-Whittaker polynomials II: ring theorem and positive specializations}
\author{Ajeeth Gunna \and Damir Yeliussizov}

\address{Ajeeth Gunna, University of Melbourne, Australia.}
\email{ajeeth.gunna@unimelb.edu.au}
\address{Damir Yeliussizov, Kazakh-British Technical University, Almaty, Kazakhstan.}
\email{yeldamir@gmail.com}

\begin{document}
\begin{abstract}
We study inhomogeneous $q$-Whittaker polynomials which extend both $q$-Whittaker and stable Grothendieck polynomials. We prove that inhomogeneous $q$-Whittaker polynomials (in countably many variables) form a basis of certain commutative ring extending the ring of symmetric functions to a subring of its completion.  
We then describe positive specializations of that ring and relate them with a subset of Macdonald-positive specializations of the ring of symmetric functions. 
We also show some related probability distributions obtained from  positive specializations of inhomogeneous $q$-Whittaker polynomials. 
\end{abstract}

\maketitle
\setcounter{tocdepth}{1}
\makeatletter
\def\l@subsection{\@tocline{2}{0pt}{2.5pc}{5pc}{}}
\makeatother
\tableofcontents
\section{Introduction}

\subsection{Motivation and context}
In this paper, we study some structural questions about a family of symmetric functions called {\it inhomogeneous $q$-Whittaker polynomials}. These functions are a special case of a broader class of symmetric polynomials introduced by Borodin and Korotkikh \cite{spin-BK2024} known as  {\it inhomogeneous spin $q$-Whittaker polynomials}. 

These functions arise from quantum integrable systems, in the form of two-dimensional exactly solvable lattice models, as partition functions. Lattice models have been systematically employed in the study of symmetric functions. In this approach, the symmetric polynomials are expressed as partition functions and then by leveraging the underlying {Yang--Baxter equation} one can derive a range of properties and identities with conceptually evident proofs. Furthermore, 
this approach allows to construct new highly nontrivial families of symmetric polynomials through modifications of local vertex weights. In particular, the additional parameters appearing in the inhomogeneous spin $q$-Whittaker polynomials arise from such modifications to the vertex weights of the lattice model used for {\it spin $q$-Whittaker polynomials} by Borodin and Wheeler in~\cite{spin-BW2021}. We refer the reader to the first instalment of this series~\cite{GWZJ2025} for a comprehensive account of the interplay between lattice models and symmetric polynomials.

Our focus is on inhomogeneous $q$-Whittaker polynomials which exemplify a remarkable feature of lattice models: their ability to unify seemingly unrelated families of symmetric functions. These polynomials extend both $q$-Whittaker polynomials and stable Grothendieck polynomials. While $q$-Whittaker polynomials arise as polynomial eigenfunctions of the $q$-deformed Toda chain \cite{Ruijsenaars1990,Etingof1999} and as characters of certain subspaces of integrable representations of affine algebras \cite{ChariLoktev2006}, stable Grothendieck polynomials appear as polynomial representatives of Schubert varieties in the $K$-theory of Grassmannians \cite{Gcom:B2002}. We expect that the family of inhomogeneous $q$-Whittaker polynomials 
reveals deep and unexpected connections between the diverse contexts in which the two different subclasses of polynomials arise.

\subsection{Summary of main results}
Our main results are the following: 

\medskip

{\bf Basis phenomenon: Ring theorem.} We prove that inhomogeneous $q$-Whittaker functions (in infinitely many variables) form a basis of certain commutative ring extending the ring of symmetric functions to a subring of its completion (by degree filtration), i.e. they have finite product expansions; moreover, this ring extends to a bialgebra via skew versions. 
This is a rather rare and remarkable phenomenon for classes of symmetric functions (with infinitely growing degrees, living in the completion of the ring of symmetric functions) and it generalizes the result of Buch \cite{Gcom:B2002} about stable Grothendieck polynomials. 

\medskip

{\bf Positive specializations.} The ring theorem provides foundation for studying positive specializations on the basis of inhomogeneous $q$-Whittaker functions. We describe real homomorphisms of that ring that are nonnegative on the basis of inhomogeneous $q$-Whittaker functions and their skew versions. This description is related to similar positivity questions about $q$-Whittaker polynomials which are a special case of Macdonald-positive specializations of the ring of symmetric functions,
classified by Matveev who proved the Kerov conjecture \cite{positive:M2019Kerov}. 
The latter generalizes classical Edrei--Thoma theorem on the description of Schur-positive specializations, which is a fundamental result related to total positivity and asymptotic representation theory, see e.g. \cite{Borodin_Olshanski_2016} and references therein. We also show some related probability distributions on integer partitions obtained from such positive specializations of inhomogeneous $q$-Whittaker polynomials. 

\subsection{Inhomogeneous $q$-Whittaker polynomials}

We introduce the family $\{\ff_{\lambda/\mu}\}$ of skew inhomogeneous $q$-Whittaker polynomials through their branching formula, which offers the most direct definition for our purposes:

For two partitions $\lambda$ and $\mu$, the one-variable \emph{skew inhomogeneous $q$-Whittaker polynomial} is defined by
\begin{equation*}
 \mathbb{F}_{\lambda/\mu}(x) := \mathbf{1}_{\mu \prec \lambda}\,\,
 x^{|\lambda|-|\mu|}\,
 \prod_{r=1}^{\ell(\lambda)}
 (x;q)_{\mu_{r}-\lambda_{r+1}}\,
 \frac{(q;q)_{\lambda_{r}-\lambda_{r+1}}}{(q;q)_{\lambda_{r}-\mu_{r}}\,(q;q)_{\mu_{r}-\lambda_{r+1}}}.
\end{equation*}
Then for many variables the inhomogeneous $q$-Whittaker polynomials can be defined recursively via the branching formula: 
$$
\ff_{\lambda/\mu}(x_1, \ldots, x_n) = \sum_{\nu} \ff_{\lambda/\nu}(x_1, \ldots, x_{n-1})\, \ff_{\nu/\mu}(x_n). 
$$

These polynomials are inhomogeneous in terms of their degree and their lowest-degree component coincides with the corresponding skew $q$-Whittaker polynomial. 
Their highest degree increases with the number of variables, as illustrated by the following example:
\[
\ff_{1/1}(x_1,\ldots, x_n) = 1 - \mathbb{F}_{1}(x_{1},\dots,x_{n}) = \prod_{i=1}^{n} (1 - x_{i}).
\]

\subsection{Basis phenomenon: Ring theorem}
The inhomogeneous $q$-Whittaker polynomials satisfy the basic stability property: $$\ff_{\lambda}(x_1,\ldots, x_n, x_{n+1})|_{x_{n+1} = 0} = \ff_{\lambda}(x_1,\ldots, x_n),$$ and hence by taking inverse limit we can consider them in infinitely many variables.  
Note that in infinitely many variables the functions $\mathbb{F}_{\lambda} = \mathbb{F}_{\lambda}(x_1,x_2,\ldots)$ become elements of the completion $\hat\Lambda$ of the ring of symmetric functions $\Lambda$ which contain infinite linear combinations 
of its basis elements with growing degrees. 
More concretely, we have $$\mathbb{F}_{\lambda} = W_{\lambda} + \text{higher degree elements},$$ and such series expansions in the `basis' $\{W_{\lambda} \}$ of $q$-Whittaker functions are infinite. Note that $q$-Whittaker functions $\{W_{\lambda} \}$ form a (homogeneous) basis of $\Lambda$ (over $\mathbb{R}(q)$). In particular, the elements $\{\mathbb{F}_{\lambda} \}$ are linearly independent and the elements of $\hat\Lambda$ can also be expressed as infinite linear combinations of $\{\mathbb{F}_{\lambda} \}$. Given these properties of inhomogeneous $q$-Whittaker functions, the following result is rather remarkable, as a priori any products of these functions should expand infinitely in the same family. 

\begin{thm}[Theorem~\ref{thm:finite_LR_rule}]
(i) There is a commutative ring $\Gamma^q$ (subring of $\hat\Lambda$) with a basis $\{\mathbb{F}_{\lambda}\}$: 
$$
\Gamma^q = \bigoplus_{\lambda} \mathbb{R}(q) \cdot \mathbb{F}_{\lambda},
$$
with finite product expansions given by 
$$
\mathbb{F}_{\mu} \cdot \mathbb{F}_{\nu} = \sum_{\lambda} c^{\lambda}_{\mu, \nu} \,\,\mathbb{F}_{\lambda}, \quad c^{\lambda}_{\mu, \nu} \in \mathbb{R}(q),
$$
for all $\mu, \nu$. 

(ii) Moreover, $\Gamma^q$ can be made a commutative and cocommutative bialgebra with the above product and the coproduct $\Delta: \Gamma^q \to \Gamma^q \otimes \Gamma^q$ given by 
$$
\Delta(\ff_{\lambda}) = \sum_{\mu} \ff_{\lambda/\mu} \otimes \ff_{\mu}
$$
for skew functions $\mathbb{F}_{\lambda/\mu} \in \Gamma^q$ which are also elements of the ring $\Gamma^q$. 
\end{thm}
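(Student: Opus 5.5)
We follow the strategy Buch used for stable Grothendieck polynomials: first prove finiteness in finitely many variables, then pass to the limit using stability. The key input is a Cauchy-type identity, together with a dual family $\{\mathbb{G}_\lambda\}$ that pairs with $\{\mathbb{F}_\lambda\}$. Concretely, we expect (from the lattice-model/Yang--Baxter framework of \cite{spin-BK2024,GWZJ2025}) a dual family of inhomogeneous $q$-Whittaker-type functions $\mathbb{G}_\lambda(y)$ whose lowest-degree part is the dual $q$-Whittaker function. These should satisfy
\[
\sum_\lambda \mathbb{F}_\lambda(x)\,\mathbb{G}_\lambda(y)=\prod_{i,j}\frac{1}{(x_iy_j;q)_\infty}\quad(\text{possibly with a correction factor}),
\]
and the $\mathbb{G}_\lambda$ should be polynomials spanning $\Lambda$, with an upper-unitriangular transition to $q$-Whittaker duals. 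This yields a bilinear pairing $\langle \mathbb{F}_\lambda,\mathbb{G}_\mu\rangle=\delta_{\lambda\mu}$ between $\hat\Lambda$ and $\Lambda$. We also use the branching rule: the skew Cauchy identity gives a coproduct on $\mathbb{G}$ dual to multiplication of $\mathbb{F}$'s.

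\textbf{Step 1 (finiteness of products).} Expand $\mathbb{F}_\mu\mathbb{F}_\nu=\sum_\lambda c^\lambda_{\mu\nu}\mathbb{F}_\lambda$ in $\hat\Lambda$; this is possible since $\{\mathbb{F}_\lambda\}$ is unitriangular to $\{W_\lambda\}$ with respect to degree. By duality,
\[
c^\lambda_{\mu\nu}=\langle \mathbb{F}_\mu\mathbb{F}_\nu,\mathbb{G}_\lambda\rangle,
\]
which is the coefficient of $\mathbb{G}_\mu(y)\otimes\mathbb{G}_\nu(z)$ in $\mathbb{G}_\lambda(y,z)$ (the "coproduct" of $\mathbb{G}$, obtained from the skew branching $\mathbb{G}_\lambda(y,z)=\sum_\kappa \mathbb{G}_{\lambda/\kappa}(y)\mathbb{G}_\kappa(z)$). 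We then need to show this vanishes unless $|\lambda|\le |\mu|+|\nu|$, or more precisely unless $\lambda\subseteq$ some bounded shape such as $\ell(\lambda)\le\ell(\mu)+\ell(\nu)$ and $\lambda_1\le \mu_1+\nu_1$. This holds if $\mathbb{G}_\lambda$ expands in a basis $\{\mathbb{G}_\mu\otimes\mathbb{G}_\nu\}$ whose lowest degrees are at least $|\lambda|$, i.e.\ $\mathbb{G}$ is "top-degree triangular" with only lower-degree corrections. An alternative route avoids $\mathbb{G}$: work in $n$ variables, where $\mathbb{F}_\lambda(x_1,\dots,x_n)=0$ unless $\ell(\lambda)\le n$, and show the product lies in the span of $\mathbb{F}_\lambda$ with $\lambda_1\le\mu_1+\nu_1$. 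We would use the branching formula to see that every monomial $x^\alpha$ of $\mathbb{F}_\lambda(x_1,\dots,x_n)$ has $\alpha_i\le\lambda_1$. Comparing maximal $x_1$-degree then bounds $\lambda_1$, and stability makes the coefficients independent of $n$ once $n\ge \ell(\mu)+\ell(\nu)$. Together these bounds make the sum finite.

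\textbf{Step 2 (ring and bialgebra).} Finiteness plus linear independence give the ring $\Gamma^q$, with commutativity inherited from $\hat\Lambda$. For (ii), write $\mathbb{F}_\lambda(x,y)=\sum_\mu\mathbb{F}_{\lambda/\mu}(x)\mathbb{F}_\mu(y)$ using branching and stability. We must show each $\mathbb{F}_{\lambda/\mu}\in\Gamma^q$, i.e.\ that it is a finite combination of the $\mathbb{F}_\kappa$. Again by duality, the coefficient of $\mathbb{F}_\kappa$ in $\mathbb{F}_{\lambda/\mu}$ is the coefficient of $\mathbb{G}_\lambda$ in $\mathbb{G}_\mu\mathbb{G}_\kappa$. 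This vanishes unless $|\kappa|\le|\lambda|$, since products of polynomials $\mathbb{G}$ expand in degree-bounded terms, which gives finiteness. Coassociativity follows from splitting the variables into three groups, and compatibility of $\Delta$ with the product follows from $\Delta f(x,y)=f(x,y)$ being a ring homomorphism $\hat\Lambda\to\hat\Lambda\hat\otimes\hat\Lambda$ restricted to $\Gamma^q$. Cocommutativity follows from the symmetry of $f(x,y)$ in the two sets of variables.

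The main obstacle is Step 1: producing the bound on $\lambda$. The cleanest approach needs the dual family and the Cauchy identity with the right triangularity properties. If those are not available as earlier results, the fallback is the direct bound on the $x_1$-degree of $\mathbb{F}_\lambda$ from the branching formula (degree in $x_n$ equal to $\lambda_1$ at most), combined with a careful induction on $n$.
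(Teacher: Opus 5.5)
Your proposal has a genuine gap at the bound $\ell(\lambda)\le\ell(\mu)+\ell(\nu)$.

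\textbf{The dual-family route.} Top-degree triangularity of the polynomials $\mathbb{G}$ only shows that $\mathbb{G}_\mu(y)\mathbb{G}_\nu(z)$ can occur in $\mathbb{G}_\lambda(y,z)$ when $|\mu|+|\nu|\le|\lambda|$. That bounds $(\mu,\nu)$ in terms of $\lambda$, which is the trivial direction: it is just the statement that the expansion of $\ff_\mu\ff_\nu=W_\mu W_\nu+\cdots$ exists in $\hat\Lambda$. Your intermediate claim that $c^\lambda_{\mu,\nu}=0$ unless $|\lambda|\le|\mu|+|\nu|$ is false, since $\ff_\square^2$ contains $-(1-q)\ff_{(2,1)}$.

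\textbf{The fallback.} This part does correctly give $\lambda_1\le\mu_1+\nu_1$. Each one-variable factor $\ff_{\lambda/\kappa}(x)$ has degree exactly $\lambda_1$. A dimension count then shows that the $\ff_\lambda(x_1,\dots,x_n)$ with $\lambda_1\le N$ and $\ell(\lambda)\le n$ form a basis of the symmetric polynomials of degree at most $N$ in each variable. That is a pleasant elementary replacement for the paper's use of the dual Cauchy identity \eqref{eq:dual_cauchy} with the $j$-polynomials.

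However, the sentence ``stability makes the coefficients independent of $n$ once $n\ge\ell(\mu)+\ell(\nu)$'' is the length bound itself, not a consequence of stability. Stability only says that $c^\lambda_{\mu,\nu}$ does not change with $n$ for each fixed $\lambda$. It cannot stop new $\lambda$ with $\ell(\lambda)=n$ from entering at every $n$; such terms genuinely do enter when passing from one to two variables in $\ff_\square^2$. At $q=0$ you could transport the $\lambda_1$ bound to lengths by conjugation symmetry of stable Grothendieck functions. No such symmetry exists here: $\omega_q$ sends $\ff_{\lambda}$ to the different family $J_{\lambda'}$ (Proposition~\ref{prop:omega_inhomo}).

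\textbf{The missing ingredient} is the vanishing of the \emph{dual} functions in few variables, Proposition~\ref{cor:vanishing_of_skew_whit}. Take $\mathbb{G}=\widetilde{\ff}$ and the kernel of \eqref{eq:cauchy}, which is multiplicative in the $y$-variables. Your own identity then reads $\widetilde{\ff}_\lambda(y,z)=\sum_{\mu,\nu}c^\lambda_{\mu,\nu}\,\widetilde{\ff}_\mu(y)\,\widetilde{\ff}_\nu(z)$. Use $\ell(\mu)$ variables $y$ and $\ell(\nu)$ variables $z$. The left side vanishes if $\ell(\lambda)>\ell(\mu)+\ell(\nu)$, while the surviving products on the right are linearly independent, so $c^\lambda_{\mu,\nu}=0$. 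The paper runs the same argument in the skew form $\widetilde{\ff}_{\lambda/\nu}(y)=\sum_\mu c^\lambda_{\mu,\nu}\widetilde{\ff}_\mu(y)$, and bounds $\lambda_1$ analogously via $j_{\lambda'/\nu'}$ and Proposition~\ref{cor:vanishing_of_skew_HL}.

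\textbf{Step~2 has the same direction error.} For the coefficient of $\ff_\kappa$ in $\ff_{\lambda/\mu}$, degree only gives $|\kappa|\ge|\lambda|-|\mu|$, not $|\kappa|\le|\lambda|$. Finiteness instead follows from two bounds:
\begin{itemize}
\item $\ell(\kappa)\le\ell(\lambda)$. Read this off the identity $\sum_\lambda\ff_{\lambda/\mu}(x)\widetilde{\ff}_\lambda(y)=\sum_\kappa\ff_\kappa(x)\widetilde{\ff}_\kappa(y)\widetilde{\ff}_\mu(y)$ in exactly $\ell(\lambda)$ variables $y$, where every $\widetilde{\ff}_\kappa$ with $\ell(\kappa)>\ell(\lambda)$ vanishes.
\item $\kappa_1\le\lambda_1$, from the skew version of your exponent bound.
\end{itemize}
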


This result is an instance of the {\it basis phenomenon}, which so far was known to hold only for stable Grothendieck functions and their shifted versions. For stable Grothendieck functions such ring properties were a byproduct of the Littlewood--Richardson (LR) rule proved by Buch \cite{Gcom:B2002}. In \cite{Gcom:Y2019SGrothCauchy} the second author gave another proof of this ring theorem without relying on LR rules. Similarly, for shifted versions of stable Grothendieck functions such ring properties were first conjectured by Ikeda and Naruse \cite{Gcom:IkedaNaruse2013}, and then established for one such family by Clifford--Thomas--Yong in \cite{Gcom:CTYong2014} using LR rule, and proved for another shifted family by Lewis--Marberg \cite{Gcom:LewisMarberg2024} with a proof idea similar to \cite{Gcom:Y2019SGrothCauchy}. 

Let us note that (for $q = 0$) our theorem generalizes the  ring theorem for stable Grothendieck functions. Our proof relies on applying various Cauchy identities in a novel way along with  properties of several dual functions.  The principal difference and difficulty  compared to the proof of ring theorem for stable Grothendieck polynomials from \cite{Gcom:Y2019SGrothCauchy} is that  for inhomogeneous $q$-Whittaker polynomials there is no conjugation automorphism which was essential for stable Grothendieck polynomials.

\subsection{Positive specializations}
As we have established existence of the ring $\Gamma^q$, we can study its positivity. We study positive specializations of the ring $\Gamma^q$ assuming $|q| < 1$ is a fixed real number throughout. A ring homomorphism $\varphi : \Gamma^q \to \mathbb{R}$ is called an {\it $\mathbb{F}$-positive specialization} if $\varphi(\mathbb{F}_{\lambda/\mu}) \ge 0$ for all $\lambda, \mu$. 

Studying this problem for the ring $\Gamma^q$ carries some additional technical obstacles compared to similar problems for the ring $\Lambda$ of symmetric functions. 
For instance, 
we do {\it not } know if $\Gamma^q$ is a polynomial ring with free generators, 
unlike in the case of the polynomial ring 
$\Lambda$ for which positive specializations are described via values or generating functions on its generators such as power sum or elementary symmetric functions, as manifested in the known example of the celebrated Edrei--Thoma theorem on description of Schur-positive specializations and its generalizations for some other bases of $\Lambda$, see e.g. \cite{Borodin_Olshanski_2016, positive:M2019Kerov} and references therein.  Here for the ring $\Gamma^q$ we generally rely on a different approach for describing positive specializations via extensions and unions of some basic generating positive specializations, as was developed in \cite{positive:Yel20}; see also \cite{marberg2025positive} for a recent work addressing this approach. 

    For specializations $\varphi_1, \varphi_2 : \Gamma^{q} \to \mathbb{R}$, we can define the {\it union} of $\varphi_1, \varphi_2$ as the specialization $\varphi$ denoted by $\varphi = (\varphi_1, \varphi_2) = \varphi_1 \cup \varphi_2$ 
    via the bialgebra coproduct $\Delta: \Gamma^q \to \Gamma^q \otimes \Gamma^q$ as 
    $$
    \varphi = (\varphi_1, \varphi_2) = m \circ (\varphi_1 \otimes \varphi_2) \circ \Delta,  
    $$
    where $m : \mathbb{R} \otimes \mathbb{R} \to \mathbb{R}$ is the multiplication map.  
    It is given on the basis elements by the branching formula 
    $$
    \varphi(\ff_{\lambda}) = \ff_{\lambda}(\varphi_1, \varphi_2) = \sum_{\mu} \ff_{\lambda/\mu}(\varphi_1) \ff_{\mu}(\varphi_2), 
    $$
which is compatible with bialgebra structure of $\Gamma^q$. 

\medskip

We show that $\ff$-positive specializations can be described via three basic {\it $\ff$-positive generators}: 
\begin{itemize}
\item Simple variable substitution $\hat\theta_{\boldsymbol{\alpha}}(\ff_{\lambda/\mu}) = \ff_{\lambda/\mu}(\boldsymbol{\alpha})$.
\item Dual variable substitution $\hat\theta'_{\boldsymbol{\beta}} = \hat\theta_{\boldsymbol{\beta}} \circ \omega_q$ given by composing simple variable substitution with $\omega_q$ automorphism (see Prop~\ref{prop:omega_inhomo} below).
\item Plancherel specialization given by $\hat\theta_{Pl, \gamma} = \lim_{n \to \infty}(\underbrace{\hat\theta_{\gamma/n}, \ldots, \hat\theta_{\gamma/n}}_{n \text{ times}})$.
\end{itemize}
These specializations present extensions to similar known Schur-positive generators. 

\medskip

It will be useful to parametrize $\ff$-positive specializations via sequences from the following infinite simplex:  
\begin{multline*}
\Omega := \left\{(\boldsymbol{\alpha}, \boldsymbol{\beta}, \gamma) \in [0,1]^{\infty} \times \mathbb{R}^{\infty}_{\ge 0}  \times \mathbb{R}_{\ge 0} : \boldsymbol{\alpha} =(\alpha_1 \ge \alpha_2 \ge \cdots),\right. \\ \left.
\boldsymbol{\beta} =(\beta_1 \ge \beta_2 \ge \cdots), \sum_{n} (\alpha_n + \beta_n) < \infty \right\}.
\end{multline*}

For a ring homomorphism $\varphi : \Gamma^q \to \mathbb{R}$ we also denote $\varphi_n := \varphi(\ff_{1^n})$ for its values on the single column elements, for which we also have $1 \ge \varphi_1 \ge \varphi_2 \ge \cdots \ge 0$ if $\varphi$ is $\ff$-positive.  

\medskip

We then describe $\ff$-positive specializations as follows. 

\begin{thm}[Theorem~\ref{thm:fact}]
We have: $\varphi : \Gamma^q \to \mathbb{R}$ is $\mathbb{F}$-positive specialization such that     
$
\sum_{n} \varphi_n < \infty,
$
if and only if $\varphi$ can be factorized via $\ff$-positive generators as  
$$
\varphi = \hat\theta_{\boldsymbol{\alpha}} \cup \hat\theta'_{\boldsymbol{\beta}} \cup \hat\theta_{Pl, \gamma} 
$$
for parameters $(\boldsymbol{\alpha}, \boldsymbol{\beta}, \gamma) \in \Omega$ 
such that $\sum_n \varphi_n = \sum_{n} \alpha_n + (1-q)\sum_n\beta_n + \gamma < \infty$. 
\end{thm}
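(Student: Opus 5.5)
We treat the two directions separately. The ``if'' direction is routine and we settle it first; all the work is in the ``only if'' direction.

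\textbf{Sufficiency.} We first check that each generator is $\ff$-positive.
\begin{itemize}
\item For $\hat\theta_{\boldsymbol{\alpha}}$ with $\alpha_i\in[0,1]$, positivity is immediate from the one-variable branching formula. Each factor $(x;q)_{k}$ is nonnegative for $x\in[0,1]$, and the $q$-binomial ratios are positive for $|q|<1$ (we take $0\le q<1$ when needed). Products over many variables are then positive by branching, and the limit in the number of variables converges because $\sum\alpha_n<\infty$.
\item For $\hat\theta'_{\boldsymbol{\beta}}$ we use Prop~\ref{prop:omega_inhomo}. The functions $\omega_q(\ff_{\lambda/\mu})$ should be the dual inhomogeneous functions, and their one-variable branching weights are manifestly nonnegative for $\beta\ge 0$.
\item For $\hat\theta_{Pl,\gamma}$, positivity follows because it is a pointwise limit of unions of $\hat\theta_{\gamma/n}$. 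Unions preserve positivity through the coproduct formula $\varphi(\ff_{\lambda/\mu})=\sum_\nu \ff_{\lambda/\nu}(\varphi_1)\ff_{\nu/\mu}(\varphi_2)$, since skew functions are in $\Gamma^q$ by part (ii) of the ring theorem.
\end{itemize}
Finally, the identity $\sum_n\varphi_n=\sum\alpha_n+(1-q)\sum\beta_n+\gamma$ comes from computing the generating series $\sum_n \varphi(\ff_{1^n})z^n$ on each generator and multiplying. This works because single columns branch multiplicatively under union: only $\ff_{1^n/1^k}$ survive.

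\textbf{Necessity.} We reduce to the homogeneous setting. Given an $\ff$-positive $\varphi$, we build from it a $q$-Whittaker-positive (Macdonald-positive with $t=0$) specialization $\psi$ of $\Lambda$, and then invoke Matveev's classification \cite{positive:M2019Kerov}. That classification says a $q$-Whittaker-positive specialization is given by parameters $(\boldsymbol{\alpha},\boldsymbol{\beta},\gamma)$ through $\Pi(z)=e^{\gamma z}\prod_i \frac{1}{(\alpha_i z;q)_\infty}\prod_i(1+\beta_i z)$, or a similar form.

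The natural construction uses the expansion $\ff_\lambda=W_\lambda+\dots$ and the Cauchy-type identities from Part I. One option is to realize inhomogeneous functions as $W$'s evaluated at a shifted specialization. Concretely, the one-variable weight $x^{|\lambda/\mu|}(x;q)_{\mu_r-\lambda_{r+1}}$ suggests that $\ff_{\lambda/\mu}(x)$ is a positive combination of $W$-branchings in the ``variables'' $x$ together with a $(-x)$-type dual piece.

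The alternative, which we would try first, is the approach of \cite{positive:Yel20}: peel off generators one at a time.
\begin{enumerate}
\item From the column values $\varphi_n$ (a decreasing sequence in $[0,1]$), define $\alpha_1=\lim_n \varphi_n^{1/n}$, or the analogous quantity from the row values $\varphi(\ff_{(n)})$, which should detect the largest $\alpha$ or $\beta$.
\item Show that $\varphi$ factors as $\hat\theta_{\alpha_1}\cup\varphi'$ with $\varphi'$ still $\ff$-positive. This would be done by constructing $\varphi'$ through an ``inverse'' coproduct and using the ring structure to verify that positivity survives.
\item Iterate to extract all $\alpha_i$ and $\beta_i$.
\item Show that the residue, once all $\alpha_i$ and $\beta_i$ have been removed, has $\varphi'(\ff_{\lambda/\mu})$ matching the Plancherel value. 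For this we use summability, $\sum\varphi_n<\infty$, and compare with the homogeneous leading terms via a scaling limit.
\end{enumerate}

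\textbf{Main obstacle.} The hardest step is step~2, the factorization: showing that removing a single generator preserves $\ff$-positivity. Since $\Gamma^q$ is not known to be polynomial, we cannot define $\varphi'$ on free generators. Our plan is to define $\varphi'$ on $\Lambda\subset\Gamma^q$-completion through generating functions, then extend it using the finite product expansions, and finally prove positivity of $\varphi'(\ff_{\lambda/\mu})$ as a limit of ratios of positive determinant-like minors. This last part is an analogue of the total-positivity argument in Edrei--Thoma, applied to Toeplitz-type matrices built from the $\varphi_n$ and the $\varphi(\ff_{(n)})$.

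The summability hypothesis is what guarantees that the infinite products over the $\alpha_i$ and $\beta_i$ converge, and that no extra ``$\hat\Lambda$-tail'' degrees of freedom appear. Without it, positive specializations could see the higher-degree components in a nonstandard way.
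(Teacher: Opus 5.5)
Your ``if'' half is essentially the paper's Lemma~\ref{lem:genf} combined with the fact that extensions commute with unions. The ``only if'' half, which is the real content of the theorem, is not proved. You never say how the $W$-positive $\psi$ is built. The route you would try first, peeling off one generator at a time by an Edrei--Thoma-type minor argument, stalls exactly at the step you flag: $\Gamma^q$ has no known free generators, and nothing in the setup gives a determinantal (Jacobi--Trudi-type) formula for $\ff_{\lambda/\mu}$. So there are no Toeplitz minors to work with, and that step is essentially the whole difficulty of a Kerov--Matveev-type classification. Step~1 is also misdirected. Since $\hat\theta_{\alpha}(\ff_{1^n})=0$ for $n\ge 2$ while $\hat\theta'_{\beta}(\ff_{1^n})=(1-q)\bigl(\beta/(1+\beta)\bigr)^n$, the column values detect the $\beta_j$, not the $\alpha_i$.

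The missing idea is to use the expansion in the opposite direction from the one you consider. Do not write $\ff$ in terms of $W$ (those coefficients alternate in sign); write $W_\lambda=\sum_\mu a_{\lambda,\mu}(q)\,\ff_\mu$ with $a_{\lambda,\mu}(q)\ge 0$ \cite[Corollary~5.2]{GWZJ2025}. This pulls $\varphi$ back to $\Lambda$ directly: set $\phi(W_\lambda):=\sum_\mu a_{\lambda,\mu}(q)\,\varphi(\ff_\mu)$, which is automatically nonnegative. Summability is used precisely to make this finite:
\begin{itemize}
\item $W_1=\sum_{n\ge1}\ff_{1^n}$ gives $\phi(W_1)=\sum_n\varphi_n$;
\item $W_1^m=\sum_{\lambda\vdash m}f_\lambda W_\lambda$ with all $f_\lambda>0$ (Proposition~\ref{prop:w1}) gives $f_\mu\,\phi(W_\mu)\le\phi(W_1)^m$.
\end{itemize}
Matveev's classification then applies to $\phi$ as it stands. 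Each $W$-positive generator extends to $\Gamma^q$ and extension commutes with unions, so $\varphi=\hat\phi$ inherits the factorization; nothing is ever peeled off. (Multiplicativity of $\phi$ and $\hat\phi=\varphi$ require interchanging sums against the signed structure constants of $\Gamma^q$. The paper only asserts these, so a careful write-up should justify them.)

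Three further points. First, Matveev's theorem puts no upper bound on the $\alpha_i$, so $\alpha_1\le 1$ must be extracted from $\ff$-positivity, and your plan never does this. Do not copy the paper's sentence for this step: monotonicity of $\varphi_n$ does not pass to $\phi(W_{1^n})$. For $\hat\theta'_2$ one has $\varphi_n=(1-q)(2/3)^n$ but $\phi(W_{1^n})=(1-q)2^n$. Also, the factors $1/(\alpha_i z;q)_\infty$ in Corollary~\ref{cor:wgen} belong to the rows $W_{(n)}/(q;q)_n$, not to $W_{1^n}$. A working argument runs as follows. Put $R(w):=\sum_{m\ge0}\varphi(\ff_{1^{m+1}/1})(1+w)^m$. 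By Lemma~\ref{lem:fi} its coefficients are $\varphi_m-\varphi_{m+1}\ge0$, and they sum to $1$ because $\varphi_m\to0$; hence $R>0$ on $(-1,0]$. By Proposition~\ref{prop:wf}, $R(w)=\sum_n\phi(W_{1^n})w^n$ for small $w<0$. By analytic continuation on $|1+w|<1$, it follows that $R(w)=e^{\gamma w}\prod_i(1+\alpha_i w)\prod_j\frac{1-q\beta_j w}{1-\beta_j w}$. If $\alpha_1>1$, this vanishes at $w=-1/\alpha_1\in(-1,0)$, a contradiction.

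Second, in your ``if'' half, existence of the Plancherel limit on $\Gamma^q$ needs proof. The paper bounds the series for $\overline{\ff}_\lambda$ by a multiple of $(e^\gamma-1)^{|\lambda|}$ using positivity of the Pieri rule.

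Third, also in that half, the column generating series do not simply multiply under unions: for $\hat\theta_x\cup\varphi'$ one gets $\varphi_n=(1-x)\varphi'_n+x\varphi'_{n-1}$. What the formula actually needs is that $\sum_n\varphi_n$ is additive, which is true. In addition, the dual weights are nonnegative only for $q\ge0$: if $q<0$ then $\hat\theta'_\beta(\ff_{1/1})=(1+q\beta)/(1+\beta)<0$ once $\beta>1/|q|$. Lemma~\ref{lem:genf}(ii) shares this blind spot.
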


This is a $q$-extension of a result from \cite{positive:Yel20} about Grothendieck-positive specializations. 

As we will show, $\ff$-positive specializations are related to Whittaker-positive specializations of the ring $\Lambda$ of symmetric functions. 
The latter are a special case of Macdonald-positive specializations which were originally conjectured by Kerov and proved by Matveev in \cite{positive:M2019Kerov}.


\subsection{Related probability distributions}

We also show that $\ff$-positive specializations of the ring $\Gamma^q$ produce some interesting probability distributions on integer partitions. 

\begin{thm}[Theorem~\ref{thm:prob}]
Let $q \in (0,1)$ and $\varphi : \Gamma^q \to \mathbb{R}$ be $\mathbb{F}$-positive specialization such that $\varphi_1 < 1$ and $\sum_n \varphi_n < \infty$. Then for any fixed partition $\mu$ there is a well-defined probability distribution on the set of partitions  $\lambda \supseteq \mu$ given by 
$$\mathbb{P}_{\varphi, \mu}(\lambda) := \frac{c_{\lambda, \mu}\, \varphi(\mathbb{F}_{\lambda/\mu})}{Z_{\varphi}}, \qquad c_{\lambda, \mu} := \frac{\prod_{i=1}^{\ell(\lambda')} {(q;q)_{m_{i}(\lambda')}}}{\prod_{i=1}^{\ell(\mu')} {(q;q)_{m_{i}(\mu')}}}, 
$$ 
and 
normalization constant $Z_{\varphi}$ depending on $\varphi$. 
\end{thm}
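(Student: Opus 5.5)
We need to show that $c_{\lambda,\mu}\,\varphi(\ff_{\lambda/\mu}) \ge 0$ and that $Z_\varphi := \sum_{\lambda\supseteq\mu} c_{\lambda,\mu}\,\varphi(\ff_{\lambda/\mu})$ is finite and positive. Nonnegativity is immediate. Since $q\in(0,1)$, every $(q;q)_m>0$, so $c_{\lambda,\mu}>0$, and $\varphi(\ff_{\lambda/\mu})\ge 0$ by $\ff$-positivity. Positivity of $Z_\varphi$ is also easy, because the term $\lambda=\mu$ contributes $\varphi(\ff_{\mu/\mu})=\varphi(1)=1$. The whole content is therefore the convergence of $Z_\varphi$, and the plan is to compute it in closed form through a Cauchy-type identity.

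\textbf{Step 1: the identity for a single variable.} The factor $c_{\lambda,\mu}$ equals $b_\lambda/b_\mu$, where $b_\lambda=\prod_i (q;q)_{m_i(\lambda')}=\prod_r (q;q)_{\lambda'_r-\lambda'_{r+1}}$ is the $q$-Whittaker normalization. This suggests pairing $\ff_{\lambda/\mu}$ with the dual functions $b_\lambda \ff^*_\lambda$ and specializing the dual side to the ``trivial'' point where the dual basis becomes $1/b_\lambda$. I would first establish the identity in one variable $x\in[0,1)$, namely
\begin{equation*}
\sum_{\lambda\supseteq \mu} \frac{b_\lambda}{b_\mu}\, \ff_{\lambda/\mu}(x) = C(x),
\end{equation*}
with $C(x)$ independent of $\mu$. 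I expect $C(x)=1/(1-x)$. Two checks support this. At $\mu=\varnothing$ only single rows occur, and $\sum_k x^k=1/(1-x)$. At $\mu=(1)$, $q=0$ the check also works. This identity should follow from a telescoping $q$-binomial computation using the explicit branching formula, summing over the horizontal strip row by row. Equivalently, it is the Cauchy identity of Theorem~\ref{thm:finite_LR_rule}'s proof with one side specialized.

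\textbf{Step 2: extending to all of $\varphi$.} The independence of $\mu$ propagates through the branching rule. For a union $\varphi=\varphi_1\cup\varphi_2$,
\begin{equation*}
\sum_{\lambda}c_{\lambda,\mu}\varphi(\ff_{\lambda/\mu})=\sum_\nu c_{\nu,\mu}\varphi_2(\ff_{\nu/\mu})\sum_\lambda c_{\lambda,\nu}\varphi_1(\ff_{\lambda/\nu}),
\end{equation*}
so $Z$ is multiplicative over unions, with all terms nonnegative so that Tonelli applies. By the factorization Theorem~\ref{thm:fact}, it then suffices to compute $Z$ for each generator:
\begin{itemize}
\item for $\hat\theta_\alpha$ we get $\prod_n (1-\alpha_n)^{-1}$, which is finite because $\alpha_1=\varphi_1<1$ (this is exactly where the hypothesis $\varphi_1<1$ enters) and $\sum\alpha_n<\infty$;
\item for the Plancherel generator we get the limit $(1-\gamma/n)^{-n}\to e^{\gamma}$;
\item for the dual generator $\hat\theta'_\beta$ we use $\omega_q$ from Prop~\ref{prop:omega_inhomo} together with the dual Cauchy identity, obtaining a product like $\prod_n(1+\beta_n)$ or $\prod_n (-\beta_n;q)_\infty$, which is finite since $\sum\beta_n<\infty$.
\end{itemize}
Multiplying these finite factors gives $Z_\varphi<\infty$, and normalizing yields the distribution $\mathbb{P}_{\varphi,\mu}$.

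\textbf{Main obstacle.} The hard parts are Step 1 and its dual analogue. We must verify exactly that the weighted single-variable sum is independent of $\mu$, and identify what $\omega_q$ does to the weights $b_\lambda/b_\mu$. If exact independence of $\mu$ fails, I would fall back on bounding the sum by a $\mu$-independent constant. One way is to compare $\ff_{\lambda/\mu}(x)$ termwise with $x^{|\lambda/\mu|}$ times $q$-binomial factors. Another is to use the lowest-degree (homogeneous $q$-Whittaker) Cauchy identity together with the sign pattern of $(x;q)_k\le 1$.
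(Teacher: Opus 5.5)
\textbf{There is a genuine gap: your Step 1, on which the whole argument rests, is false as stated.}

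With the weight of the statement, $c_{\lambda,\mu}=\prod_i(q;q)_{\lambda_i-\lambda_{i+1}}\big/\prod_i(q;q)_{\mu_i-\mu_{i+1}}$ (note $m_i(\lambda')=\lambda_i-\lambda_{i+1}$, not $\lambda'_i-\lambda'_{i+1}$), the case $\mu=\varnothing$ gives $\sum_{k\ge0}(q;q)_k\,x^k$. This is because $\ff_{(k)}(x)=x^k$ and $c_{(k),\varnothing}=(q;q)_k$. Your check gave the single rows weight $1$, which is correct only at $q=0$. Worse, the sum genuinely depends on $\mu$. For $\mu=(1)$ the contributing shapes are $(k)$ and $(k,1)$, with $\ff_{(k)/(1)}(x)=\frac{1-q^k}{1-q}\,x^{k-1}(1-x)$ and $\ff_{(k,1)/(1)}(x)=x^k$. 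The coefficient of $x$ then comes out as $(1+q)^2(1-q)$ instead of $1-q$. So no $\mu$-independent closed form exists for this weight, and the multiplicativity of Step 2 has nothing to multiply.

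The exact identity that does hold, and that the paper's proof uses (Proposition~\ref{cor:Littlewood_inwhit}), carries the \emph{reciprocal} weight $b_{\mu'}(q)/b_{\lambda'}(q)=1/c_{\lambda,\mu}$:
\[
\sum_{\lambda\supseteq\mu}\frac{b_{\mu'}(q)}{b_{\lambda'}(q)}\,\ff_{\lambda/\mu}(x_1,x_2,\ldots)=\prod_{i\ge1}\frac{1}{(x_i;q)_\infty}.
\]
In one variable with $\mu=\varnothing$ this is just the $q$-binomial theorem $\sum_k x^k/(q;q)_k=1/(x;q)_\infty$. So the paper's argument actually normalizes $\frac{b_{\mu'}}{b_{\lambda'}}\varphi(\ff_{\lambda/\mu})$, with $Z_\varphi^{-1}=\prod_{k\ge0}(1-\varphi^{q^k}_1)$ independent of $\mu$. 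For the literal $c_{\lambda,\mu}$ you only get convergence by comparison, $c_{\lambda,\mu}\le b_{\mu'}^{-2}\,b_{\mu'}/b_{\lambda'}$ (since $b_{\lambda'}\le 1$ for $q\in(0,1)$), and the normalizer then depends on $\mu$.

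Once the correct weight is used, your architecture is a legitimate alternative to the paper's. The weighted sum is multiplicative over unions because the weights factor as $w_{\lambda,\mu}=w_{\lambda,\nu}w_{\nu,\mu}$ and all terms are nonnegative; you then invoke Theorem~\ref{thm:fact}. The generator factors become $1/(\alpha;q)_\infty$, $\prod_{k\ge0}e^{\gamma q^k}$ and $1+\beta$, rather than $(1-\alpha)^{-1}$ and $e^{\gamma}$.

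Even then, the dual generator remains open. You need $\sum_{\lambda}\frac{b_{\mu'}}{b_{\lambda'}}\hat\theta'_\beta(\ff_{\lambda/\mu})=1+\beta$ for \emph{every} $\mu$, which you only guessed. It is not a one-variable statement: it must be obtained from the infinite-variable identity pushed through $\omega_q$ and the analytic-continuation definition of $\hat\theta'_\beta$.

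The paper sidesteps all per-generator work. It applies $\varphi$ directly to the infinite-variable identity and evaluates the kernel via $\prod_i(x_i;q)_\infty=\prod_{k\ge0}\prod_i(1-q^kx_i)$. Each factor $\prod_i(1-q^kx_i)=1-\ff_1(q^k\mathbf{x})$ is sent by $\varphi$ to $1-\varphi^{q^k}_1$, where $\varphi^t$ is the specialization with parameters $(t\boldsymbol{\alpha},t\boldsymbol{\beta},t\gamma)$.

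Two smaller slips:
\begin{itemize}
\item $\ff_{\mu/\mu}\neq 1$ (e.g.\ $\ff_{1/1}=\prod_i(1-x_i)$), so positivity of $Z_\varphi$ needs another reason, for instance the closed form above.
\item $\alpha_1\neq\varphi_1$ in general, although $\varphi_1<1$ does force $\alpha_1<1$.
\end{itemize}
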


The proof of this result is based on the factorization of $\ff$-positive specializations shown above, and certain Littlewood-type identity for inhomogeneous $q$-Whittaker polynomials. 

\section{Preliminaries}
\subsection{Notation}
 A \emph{partition} is an non-increasing sequence $\lambda=(\lambda_{1}\geq \lambda_{2}\geq \cdots)$ of positive integers with finitely many non-zero terms. For a partition $\lambda$, the number of non-zero terms  is referred as \emph{length of the partition} which is denoted by \(\ell(\lambda)\), and the sum of all the terms of $\lambda$ is denoted by $|\lambda|$. Furthermore, we define $m_i(\lambda)$ to be the number of parts of $\lambda$ of size $i$.
\

We associate a partition with its corresponding \emph{Young diagram} which is obtained by placing $\lambda_{i}$ boxes in the $i^{th}$ row. The column indices of the Young diagrams increases from left to right while the row indices increases from top to bottom. We denote the \emph{conjugate} of $\lambda$ by $\lambda'$ which is obtained by reflecting the Young diagram of $\lambda$ along the main diagonal.
\

We say $\mu \subseteq \lambda$ whenever the Young diagram of $\mu$ is contained in the Young diagram of $\lambda$. When $\mu \subseteq \lambda$, we define \emph{skew partition} $\lambda/\mu$ as a set theoretic difference of the corresponding  Young diagrams. We write $|\lambda/\mu|$ to denote $|\lambda|-|\mu|$. We write $\mu\prec \lambda$ whenever $\lambda/\mu$ is a horizontal strip i.e., at most one box in each column.
\

Finally, for a positive integer $k$ and formal parameters $x,q$, we recall the $q$-Pochhammer symbol:
\[
(x;q)_k := \prod_{i=0}^{k-1} (1 - x q^i).
\]
\subsection{Macdonald polynomials}
We denote by $\Lambda$ the algebra of symmetric power series of bounded degree in countably many variables $\mathbf{x} =(x_{1}, x_{2}, \dots)$ over $\mathbb{Q}(q, t)$. The \emph{Macdonald symmetric functions} $P_{\lambda}(x_{1}, x_{2}, \dots; q, t)$ form a basis of $\Lambda$. We do not provide a detailed background on these polynomials here (we refer the reader to~\cite{Macdonald}), but recall only the essential aspects required to discuss their positive specializations from~\cite{positive:M2019Kerov}. 

For a partition $\lambda$, 
let:
\begin{equation*}
b_{\lambda}(q,t):=\prod_{(i,j)\in\lambda}\mathsf{b}_{\lambda}(i,j),  
\,\text{ where }\,
\mathsf{b}_{\lambda}(i,j):=\,\dfrac{1-q^{\lambda_{i}-j}t^{\lambda^{'}_{j}-i+1}}{1-q^{\lambda_{i}-j+1}t^{\lambda^{'}_{j}-i}}.   
\end{equation*}
 (If a box $(i,j)$ is outside $\lambda$ we also set $\mathsf{b}_{\lambda}(i,j)=1$.) 
 
 For $\mu\subset\lambda$, we denote by $R_{\lambda/\mu}$ (respectively by $C_{\lambda/\mu}$) the union of all rows (respectively columns) containing boxes from $\lambda/\mu$.
For partitions $\mu \prec \lambda$, let:
\begin{equation*}
\psi_{\lambda/\mu}(q,t):=\prod_{(i,j)\in R_{\lambda/\mu}-C_{\lambda/\mu}}\dfrac{\mathsf{b}_{\mu}(i,j)}{\mathsf{b}_{\lambda}(i,j)}, \qquad 
\overline{\psi}_{\lambda/\mu}(q,t):=\prod_{(i,j)\in  C_{\lambda/\mu}}\dfrac{\mathsf{b}_{\lambda}(i,j)}{\mathsf{b}_{\mu}(i,j)}.
\end{equation*}

 \begin{defn}
 For two partitions $\lambda$ and $\mu$, the single variable skew Macdonald polynomial is defined as:
 \begin{equation}
     P_{\lambda/\mu}(x;q,t)\,=\,\mathbf{1}_{\lambda\prec \mu}\,\psi_{\lambda/\mu}(q,t)\,x^{|\lambda/\mu|}
 \end{equation}
 The $n$-variable \emph{skew Macdonald polynomial} are then defined inductively by
\begin{equation}\label{eq:branching_macd}
P_{\lambda/\mu}(x_{1},\dots,x_{n};q,t)
:= \sum_{\mu \subseteq \nu \prec \lambda}
P_{\nu/\mu}(x_{1},\dots,x_{n-1};q, t)\,
P_{\lambda/\nu}(x_{n};q, t).
\end{equation}
 \end{defn}



\medskip

The \emph{dual Macdonald polynomial} is defined as
\begin{equation}\label{eq:P_Q_relation}
Q_{\lambda/\mu} \;:=\; \frac{b_{\lambda}(q,t)}{b_{\mu}(q,t)}\, P_{\lambda/\mu}.
\end{equation}
We now collect some necessary properties of the polynomials $P_{\lambda}$. They form a basis for the ring of symmetric functions $\Lambda$. At $q=0$, Macdonald functions reduce to Hall--Littlewood polynomials and at $t=0$ they reduce to $q$-Whittaker polynomials, which are discussed below. When $q=t$, both families $P_{\lambda/\mu}$ and $Q_{\lambda/\mu}$ specialize to the skew Schur polynomials $s_{\lambda/\mu}$.

\medskip
We recall that $\Lambda$ denotes the ring of symmetric functions with coefficients in $\mathbb{Q}(q,t)$, which is a polynomial ring with free generators given by power sum symmetric functions $\{p_n \}$, or elementary symmetric functions $\{e_n \}$, i.e. 
$$
\Lambda \cong \mathbb{Q}(q,t)[p_1, p_2, \ldots] \cong \mathbb{Q}(q,t)[e_1, e_2, \ldots]. 
$$
There is an involutive automorphism $\omega_{q,t}:\Lambda_{} \to \Lambda$, which can be defined on the power-sum symmetric functions $p_{n}$ by
\[
\omega_{q,t} : p_{n} \longmapsto (-1)^{n-1} \,\dfrac{1-t^{n}}{1-q^{n}}\,p_{n}.
\] 

For any partitions $\mu \subseteq \lambda$, the involution acts on skew Macdonald polynomials as follows:
\medskip
\begin{equation}\label{eq:omega_qt}
\omega_{q,t}\bigl(P_{\lambda/\mu}(\mathbf{x};q,t)\bigr)
   = Q_{\lambda'/\mu'}(\mathbf{x};t,q),
   \qquad
\omega_{q,t}\bigl(Q_{\lambda/\mu}(\mathbf{x};q,t)\bigr)
   = P_{\lambda'/\mu'}(\mathbf{x};t,q).
\end{equation}

\medskip
This involution maps relates a Cauchy identity to the dual Cauchy identity:

\begin{equation}\label{eq:Macd_Cauchy}
\sum_{\lambda}\,P_{\lambda}(x_{1},x_{2},\dots;q,t)\,Q_{\lambda}(y_{1},y_{2},\dots;q,t)\,=\,\prod_{i,j\geq 1}\,\dfrac{(tx_{i}y_{j};q)_{\infty}}{(x_{i}y_{j};q)_{\infty}}
\end{equation}
and then applying the involution map to the Macdonald polynomial corresponding to the $y$ variables to obtain the following:
\begin{equation*}
\sum_{\lambda}P_{\lambda}(x_{1},x_{2},\dots;q,t)\,P_{\lambda'}(y_{1},y_{2},\dots;t,q)\,=\,\prod_{i,j\geq 1}\,(1+x_{i}y_{j}).
\end{equation*}

\subsection{Macdonald-positive specializations}

\begin{defn}
A {\it specialization} of $\Lambda$ is any homomorphism $\Lambda \to \mathbb{R}$. A specialization $\phi :\Lambda\mapsto\mathbb{R}$ is said to be $(q,t)$-\emph{Macdonald positive specialization} when $\phi(P_{\lambda})\geq 0$ for all $\lambda$.  
\end{defn}

 As $Q_{\lambda}=b_{\lambda}P_{\lambda}$ we know that every positive specialization of $P_{\lambda}$ is a positive specialization of $Q_{\lambda}$. If $\phi$ is $(q,t)$-Macdonald positive specialization, then the {\it dual} specialization $\omega_{t,q}(\phi):=\phi\circ \omega_{t,q}$ is a $(t,q)$-Macdonald positive specialization due to~\eqref{eq:omega_qt}. It immediately follows that $\omega_{q,t}(\omega_{t,q}(\phi))=\phi$.


\begin{defn}
 Given two 
 specializations 
 $\phi_{1}$ and $\phi_{2}$ of $\Lambda$, we define their union $\phi=(\phi_{1},\phi_{2})=\phi_{1}\cup\phi_{2}$ specialization of $\Lambda$ via power sum symmetric functions as 
 $$
\phi(p_n) = \phi_1(p_n) + \phi_2(p_n), \quad n = 1,2,\ldots
 $$
 so that for all $\lambda, \nu$ we have
\begin{equation*}
\phi(P_{\lambda/\nu})\,=\,\sum_{\nu\subseteq\mu\subseteq\lambda}\phi_{1}(P_{\lambda/\mu})\, \phi_{2}(P_{\mu/\nu}) ,\text{\hspace{7mm}and\hspace{7mm}}\phi(Q_{\lambda/\nu})\,=\,\sum_{\nu\subseteq\mu\subseteq\lambda}\phi_{1}(Q_{\lambda/\mu})\,\phi_{2}(Q_{\mu/\nu}).
\end{equation*}
\end{defn}

We observe that:
\begin{equation*}
\omega_{q,t}(\phi_{1},\phi_{2})\,=\,(\omega_{q,t}(\phi_{1}),\omega_{q,t}(\phi_{2})).    
\end{equation*}

\medskip

The following are the basic generating $(q,t)$-Macdonald positive specializations with $\phi(P_{\lambda/\mu})\geq 0$ for any partitions $\mu\subset \lambda$:
\begin{itemize}
    \item {\it Simple variable substitution} $\theta_{\alpha} \text{ for any } \alpha \geq 0 \text{ defined by setting } 
x_{1}\mapsto \alpha,\; x_{i}\mapsto 0 \text{ for } i\geq 2$. More precisely, we have: 
\[
\theta_{\alpha}\bigl(P_{\lambda/\mu}\bigr)
=
\begin{cases}
\psi_{\lambda/\mu}(q,t)\,\alpha^{|\lambda|-|\mu|}, & \text{if } \mu \prec \lambda,\\[4pt]
0, & \text{otherwise.}
\end{cases}
\]

\item {\it Dual variable substitution} $\theta'_{\beta} := \theta_{\beta} \circ \omega_{q,t} = \omega_{q,t}(\theta_{\beta})$ for any $\beta\geq 0$.  Then we have:
\[
\theta'_{\beta}\bigl(P_{\lambda/\mu}\bigr)
=
\begin{cases}
\overline{\psi}_{\lambda'/\mu'}(t,q)\,\beta^{|\lambda|-|\mu|}, & \text{if } \mu' \prec \lambda',\\[4pt]
0, & \text{otherwise.}
\end{cases}
\]

\item {\it Plancherel specialization} given by  $\theta_{Pl,\gamma}:=\lim_{m\mapsto\infty} (\underbrace{\theta_{\gamma/m}, \ldots, \theta_{\gamma/m}}_{m \text{ times}})$ for any $\gamma\geq 0$. We have: 
$$
\theta_{Pl,\gamma}(P_{\lambda/\mu}) = \lim_{m \to \infty} P_{\lambda/\mu}(\underbrace{\gamma/m, \ldots, \gamma/m}_{m \text{ times}}) = \dfrac{\gamma^{|\lambda/\mu|}}{|\lambda/\mu|!}\,\sum_{T\in \operatorname{SYT}(\lambda/\mu)}\psi_{T}(q,t),
$$
where $\operatorname{SYT}(\lambda/\mu)$ denotes the set of {standard} Young tableaux of shape $\lambda/\mu$, that is, semistandard tableaux in which each entry $1,\dots,|\lambda/\mu|$ appears exactly once. (The latter formula is a standard well-known property of specializations of symmetric functions.)
\end{itemize}

\medskip
We now state the result which classifies all the Macdonald-positive specializations (this description is equivalent to the one from \cite{positive:M2019Kerov}):

\begin{thm}[\cite{positive:M2019Kerov}]
\label{thm:Macdpositive}
Let $q, t \in \mathbb{R}$ with $|q|, |t| < 1$. A specialization $\phi : \Lambda \to \mathbb{R}$ is a $(q,t)$-Macdonald positive  specialization if and only if $\phi$ can be factorized as 
\[
\phi\,=\,\theta_{Pl,\gamma}\cup\theta_{\alpha_{1}}\cup\theta_{\alpha_{2}}\cup\cdots \cup\theta'_{\beta_{1}} \cup  \theta'_{\beta_{2}} \cup\cdots
\]
for parameters $\alpha_1 \ge \alpha_2 \ge \cdots \ge 0$, $\beta_1 \ge \beta_2 \ge \cdots \ge 0$, $\gamma \ge 0$ such that 
$
\sum_{n} \bigl(\alpha_{n} + \beta_{n}\bigr) < \infty.
$
\end{thm}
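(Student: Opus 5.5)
The plan is to reduce the statement to Matveev's theorem in its original formulation, which describes $(q,t)$-Macdonald positive specializations by the generating function of their values on the one-row functions $g_n := Q_{(n)}(\mathbf{x};q,t)$. In that form a specialization $\phi$ is Macdonald positive if and only if
$$
\sum_{n\ge 0}\phi(g_n)z^n = e^{\tilde\gamma z}\prod_{i\ge1}\frac{(t\alpha_i z;q)_\infty}{(\alpha_i z;q)_\infty}\prod_{i\ge 1}(1+\beta_i z),
$$
with $\alpha_i,\beta_i,\tilde\gamma\ge 0$ and $\sum_i(\alpha_i+\beta_i)<\infty$. Since the $g_n$ freely generate $\Lambda$ over $\mathbb{Q}(q,t)$, a specialization is determined by this generating function. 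So it suffices to show two things: the union $\theta_{Pl,\gamma}\cup\theta_{\alpha_1}\cup\cdots\cup\theta'_{\beta_1}\cup\cdots$ is well defined, and its generating function has exactly this form, with $\tilde\gamma$ a fixed positive multiple of $\gamma$.

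First step: record that union of specializations multiplies these generating functions. This holds because union adds power sums, and $\sum_n g_n z^n = \exp\bigl(\sum_n \frac{1-t^n}{1-q^n}\frac{p_n z^n}{n}\bigr)$.

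Second step: compute the generating function of each generator. For $\theta_\alpha$, the Cauchy identity \eqref{eq:Macd_Cauchy} with a single $x$-variable gives $(t\alpha z;q)_\infty/(\alpha z;q)_\infty$. For $\theta'_\beta$, \eqref{eq:omega_qt} gives $\omega_{q,t}(Q_{(n)}(\mathbf{x};q,t))=P_{(1^n)}(\mathbf{x};t,q)=e_n$, and evaluating at one variable gives $1+\beta z$. For $\theta_{Pl,\gamma}$, take the product of $m$ copies with $\alpha=\gamma/m$. Its logarithm is $\sum_n\frac{1-t^n}{1-q^n}\frac{m(\gamma z/m)^n}{n}$, which tends to $\frac{1-t}{1-q}\gamma z$. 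So $\tilde\gamma=\frac{1-t}{1-q}\gamma$, and this is a bijection $\gamma\leftrightarrow\tilde\gamma$ on $\mathbb{R}_{\ge0}$ because $|q|,|t|<1$.

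Third step: justify the infinite union. Under $\sum(\alpha_i+\beta_i)<\infty$ the products converge locally uniformly in $z$ near $0$. The coefficients of the finite unions therefore converge, and each $\phi(P_\lambda)$ is a polynomial in finitely many $g_n$. Hence the finite unions converge pointwise on $\Lambda$ to a ring homomorphism, and its generating function is the infinite product.

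With these computations the ``if'' direction can also be checked directly, without appealing to Matveev. The explicit formulas for $\theta_\alpha$ and $\theta'_\beta$ are nonnegative, since each factor $\mathsf{b}$ is a ratio $(1-q^at^b)/(1-q^ct^d)$ of positive numbers when $|q|,|t|<1$ and the exponents are not both zero. Positivity is preserved by the branching formula for unions, and by pointwise limits, which covers both $\theta_{Pl,\gamma}$ and the infinite union.

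The ``only if'' direction is the genuinely hard content, namely Kerov's conjecture as proved by Matveev. I would not reprove it but cite \cite{positive:M2019Kerov}. The remaining work is only to match parameters: the monotone ordering of $\alpha$ and $\beta$ is a free rearrangement, because unions commute. I expect the main obstacle to be bookkeeping rather than mathematics: checking that Matveev's normalization, which is stated for $P$ versus $Q$ and possibly with $\omega$-dual conventions, agrees with the one used here.
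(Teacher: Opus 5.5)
Your proposal is correct and takes the same route as the paper. The paper gives no argument of its own for this statement: it cites Matveev's theorem and remarks that the factorized form is equivalent to his generating-function description, and you likewise cite Matveev for the hard ``only if'' direction. What you add is the translation the paper leaves implicit: unions multiply $\sum_n g_n z^n$; the generators contribute $(t\alpha z;q)_\infty/(\alpha z;q)_\infty$, $1+\beta z$ and $e^{\frac{1-t}{1-q}\gamma z}$; and infinite unions are pointwise limits. All of this is accurate. The one item to add to your bookkeeping is checking that Matveev's hypotheses on $(q,t)$ cover the full range $|q|,|t|<1$ asserted here, not just nonnegative $q,t$.
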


\subsection{Hall--Littlewood and $q$-Whittaker polynomials}
In this subsection, we discuss two specializations of Macdonald polynomials: at \(q=0\) they reduce to \emph{Hall--Littlewood polynomials}, and at \(t=0\) they reduce to \emph{$q$-Whittaker polynomials}.  
We define Hall--Littlewood and $q$-Whittaker polynomials using their branching formulas.

\begin{defn}
Let $\lambda$ and $\mu$ be partitions.  
The one-variable \emph{skew Hall--Littlewood polynomial} is defined by
\begin{equation*}
Q_{\lambda/\mu}(x;t)
   :=\,\mathbf{1}_{\mu\prec\lambda}\, \kappa_{\lambda/\mu}\,
      x^{|\lambda/\mu|}\,,
\end{equation*}
where
\[
\kappa_{\lambda/\mu}
   \,= \,\prod_{i: m_{i}(\lambda)=m_{i}(\mu)+1}
      \bigl(1 - t^{m_{i}(\lambda)}\bigr).
\]      
The $n$-variable \emph{skew Hall--Littlewood polynomials} are then defined inductively by
\begin{equation*}
Q_{\lambda/\mu}(x_{1},\dots,x_{n};t)
    := \,\sum_{\mu \subseteq \nu \prec \lambda}
       Q_{\nu/\mu}(x_{1},\dots,x_{n-1};t)\,
       Q_{\lambda/\nu}(x_{n};t).
\end{equation*}
\end{defn}

\begin{defn}\label{defn:q_Whit}
Let $\lambda$ and $\mu$ be partitions.  
The one-variable \emph{skew $q$-Whittaker polynomial} is defined by
\begin{equation*}
 W_{\lambda/\mu}(x;q) := \mathbf{1}_{\mu \prec \lambda}\,\,
 x^{|\lambda|-|\mu|}\,
 \frac{(q;q)_{\lambda_{r}-\lambda_{r+1}}}{(q;q)_{\lambda_{r}-\mu_{r}}\,(q;q)_{\mu_{r}-\lambda_{r+1}}}.
\end{equation*}
The $n$-variable \emph{skew $q$-Whittaker polynomials} are then defined inductively by
\begin{equation}\label{eq:branching_whit}
W_{\lambda/\mu}(x_{1},\dots,x_{n};q)
:= \sum_{\mu \subseteq \nu \prec \lambda}
W_{\nu/\mu}(x_{1},\dots,x_{n-1};q)\,
W_{\lambda/\nu}(x_{n};q).
\end{equation}
\end{defn}

These two families are related by specializations of the involution map $\omega_{q,t}$.  
In particular,
\begin{equation}\label{eq:omega_q}
\omega_{q,0}\bigl(W_{\lambda/\mu}(\mathbf{x};q)\bigr)
   = Q_{\lambda'/\mu'}(\mathbf{x};q),
 \qquad
\omega_{0,t}\bigl(Q_{\lambda/\mu}(\mathbf{x};t)\bigr)
   = W_{\lambda'/\mu'}(\mathbf{x};t).
\end{equation}
Throughout the text we denote $W_{\lambda/\mu} = W_{\lambda/\mu}(\mathbf{x}; q)$. 

\medskip

We shall use $q$-Whittaker positive specializations. We say that ring homomorphism $\phi : \Lambda \to \mathbb{R}$ is a {\it $q$-Whittaker-positive} specialization if $\phi(W_{\lambda}) \ge 0$ for all $\lambda$. 
The $q$-Whittaker polynomials arise as $t=0$ specialization of Macdonald polynomials and hence their positive specializations can be obtained from Theorem~\ref{thm:Macdpositive} by setting $t = 0$. We also formulate the following corollary of that result, 
presented in an equivalent form.

\begin{cor}[\cite{positive:M2019Kerov}]\label{cor:wgen}
Let $q \in \mathbb{R}$ with $|q| < 1$. 
A ring homomorphism $\phi : \Lambda \to \mathbb{R}$ is a $q$-Whittaker-positive specialization if and only if $\phi$ can be defined on the ring generators $\{W_{1^n} = e_n\}$ via the following generating series: 
$$
1+\sum_{n = 1}^{\infty} \phi(W_{1^n}) z^n = e^{\gamma z} \prod^{\infty}_{i=1}(1+\beta_{i}z)\,\prod^{\infty}_{j=1}\dfrac{1}{(\alpha_{j}z;q)_{\infty}}
$$
parametrized by $\alpha_1 \ge \alpha_2 \ge \cdots \ge 0$, $\beta_1 \ge \beta_2 \ge \cdots \ge 0$, $\gamma \ge 0$ such that $\sum_{n} (\alpha_n + \beta_n) < \infty$. 
\end{cor}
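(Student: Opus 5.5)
The plan is to derive the statement directly from Theorem~\ref{thm:Macdpositive} by setting $t=0$. Since $|t|<1$ allows $t=0$, this is legitimate, and at $t=0$ we have $P_\lambda(\mathbf{x};q,0)=W_\lambda$. So $q$-Whittaker-positive specializations are exactly the unions
$$
\theta_{Pl,\gamma}\cup\theta_{\alpha_1}\cup\theta_{\alpha_2}\cup\cdots\cup\theta'_{\beta_1}\cup\theta'_{\beta_2}\cup\cdots
$$
of the generators specialized to $t=0$. It then remains to translate this factorization into the generating function of the values on $W_{1^n}$.

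First I will check that $W_{1^n}=e_n$. From the branching rule \eqref{eq:branching_whit}, each step adds a horizontal strip of a single column, and all the $q$-binomial factors equal $1$; alternatively, one can use the standard fact $P_{1^n}=e_n$. Since $\Lambda=\mathbb{R}(q)[e_1,e_2,\ldots]$ is free, a homomorphism $\phi$ is determined by $E_\phi(z):=1+\sum_n\phi(e_n)z^n$. Using
$$
E(z)=\exp\Bigl(\sum_{n\ge1}(-1)^{n-1}\frac{p_nz^n}{n}\Bigr)
$$
and the fact that the union adds values on $p_n$, we get $E_{\phi_1\cup\phi_2}=E_{\phi_1}E_{\phi_2}$.

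Next I will compute $E$ for each generator at $t=0$:
\begin{itemize}
\item $\theta_a$ gives $E(z)=1+az$.
\item $\theta_{Pl,\gamma}$ gives $\lim_m(1+\gamma z/m)^m=e^{\gamma z}$.
\item For $\theta'_b=\theta_b\circ\omega_{q,0}$, note that $\omega_{q,0}$ sends $(-1)^{n-1}p_n\mapsto p_n/(1-q^n)$. Hence $\omega_{q,0}(E(z))=\exp\bigl(\sum_n p_nz^n/(n(1-q^n))\bigr)$. At $x_1=b$ this equals $\prod_{k\ge0}(1-bq^kz)^{-1}=1/(bz;q)_\infty$.
\end{itemize}
Multiplying, the factorization of Theorem~\ref{thm:Macdpositive} becomes exactly the stated generating series, after relabeling the theorem's $\alpha$'s as the corollary's $\beta$'s and vice versa. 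Conversely, every series of this form arises from such a union, so it is positive. This gives both directions.

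The main obstacle is making the infinite unions and the Plancherel limit rigorous at the level of generating functions. My approach is to work with $\phi(p_n)$. Summability $\sum(\alpha_n+\beta_n)<\infty$ makes
$$
\sum_i\alpha_i^n+\sum_j\beta_j^n/(1-q^n)
$$
converge absolutely for each $n$, since $|q|<1$. Hence the infinite union is a well-defined homomorphism, and the infinite products converge as power series in $z$ coefficientwise. Their $e$-values are the limits of finite unions, and positivity of $\phi(W_\lambda)$ passes to these limits. The degenerate case $t=0$ also needs care in the formula for $\omega_{q,t}$ and \eqref{eq:omega_q}: the factor $(1-t^n)/(1-q^n)$ simply becomes $1/(1-q^n)$, so no singularity occurs.
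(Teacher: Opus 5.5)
Your route is the intended one: the paper's only justification is to set $t=0$ in Theorem~\ref{thm:Macdpositive}. Several of your steps are correct: $W_{1^n}=e_n$, $E_{\phi_1\cup\phi_2}=E_{\phi_1}E_{\phi_2}$, $E_{\theta_a}=1+az$ and $E_{\theta_{Pl,\gamma}}=e^{\gamma z}$.

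\textbf{The gap: the dual substitution.} You used the power-sum formula for $\omega_{q,t}$ printed in the preliminaries, but it has the ratio upside down relative to \eqref{eq:omega_qt}. The involution with $\omega_{q,t}(P_\lambda(\mathbf{x};q,t))=Q_{\lambda'}(\mathbf{x};t,q)$ is $p_n\mapsto(-1)^{n-1}\frac{1-q^n}{1-t^n}p_n$. You can test this on $P_{(1)}=p_1$, which must go to $Q_{(1)}(\mathbf{x};t,q)=\frac{1-q}{1-t}p_1$. The generator $\theta'_\beta$ in Theorem~\ref{thm:Macdpositive} is fixed by its values $\overline{\psi}_{\lambda'/\mu'}(t,q)\beta^{|\lambda/\mu|}$. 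At $t=0$, $\lambda=1^n$, $\mu=\varnothing$ these give
\[ \theta'_b(e_n)=\overline{\psi}_{(n)/\varnothing}(0,q)\,b^n=(1-q)b^n. \]
This agrees with \eqref{eq:omega_q}, where the image of $e_n$ is the Hall--Littlewood $Q_{(n)}(\mathbf{x};q)$, and with the value used in Lemma~\ref{lem:genf}(ii). Hence $E_{\theta'_b}(z)=\frac{1-qbz}{1-bz}$, not $1/(bz;q)_\infty$. Already for $n=1$ your formula gives $b/(1-q)$ instead of $(1-q)b$.

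\textbf{What the argument proves once corrected.} Now $\phi(p_n)=\gamma\delta_{n,1}+\sum_i\alpha_i^n+(-1)^{n-1}(1-q^n)\sum_j\beta_j^n$, and your convergence remarks go through with this. The $t=0$ case of Theorem~\ref{thm:Macdpositive} then yields
\[ 1+\sum_{n\ge1}\phi(W_{1^n})z^n=e^{\gamma z}\prod_i(1+\alpha_iz)\prod_j\frac{1-q\beta_jz}{1-\beta_jz}. \]
The displayed series of the statement cannot be reached by any argument when $0<q<1$. The generator $\theta'_b$ is $W$-positive, and $\frac{1-qbz}{1-bz}$ vanishes at $z=1/(qb)>0$. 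By contrast, every product $e^{\gamma z}\prod_i(1+\beta_iz)\prod_j(\alpha_jz;q)_\infty^{-1}$ is a meromorphic function with no zeros on $(0,\infty)$. So the ``only if'' direction fails as written.

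\textbf{What the displayed product actually is.} It is Matveev's generating function for the one-row elements $g_n=Q_{(n)}(\mathbf{x};q,0)=W_{(n)}/(q;q)_n$, not for $e_n$. Indeed $\theta_a(g_n)=a^n/(q;q)_n$ gives $1/(az;q)_\infty$, $\theta'_b(g_n)=b\,\delta_{n,1}$ gives $1+bz$, and $\theta_{Pl,\gamma}$ gives $e^{\gamma z/(1-q)}$. This uses the theorem's own labelling, with no swap $\alpha\leftrightarrow\beta$. The fact that you had to swap was the warning sign.

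Your method therefore proves the statement only after one of two repairs. Either replace $\phi(W_{1^n})$ by $\phi(W_{(n)})/(q;q)_n$ and rescale $\gamma$, or replace the right-hand side by the $e$-form above.
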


\subsection{Inhomogeneous $\texorpdfstring{q}{q}$-Whittaker polynomials}
In~\cite{spin-BK2024}, Borodin and Korotkikh introduced \emph{inhomogeneous spin $q$-Whittaker} polynomials $\mathbb{F}^{(\mathbf{a},\mathbf{b})}_{\lambda}$, where $\mathbf{a}=(a_{0},a_{1},\dots)$ and $\mathbf{b}=(b_{0},b_{1},\dots)$. These polynomials are a generalization of $q$-Whittaker polynomials and contain two additional sets of parameters. In the same paper, they explored many of their properties, including the branching formula and Cauchy identities, among many other properties.

In this paper, we are interested in two particular specializations of $\mathbb{F}^{(\mathbf{a},\mathbf{b})}_{\lambda}$. The first specialization $a_{i}=0$ and $b_{i}=1$ is referred to as \textbf{\emph{inhomogeneous $q$-Whittaker polynomials}}~$\mathbb{F}_{\lambda}$, and the other specialization $a_{i}=1$ and $b_{i}=0$ is referred to as \textbf{\emph{interpolation $q$-Whittaker polynomials}} $\widetilde{\mathbb{F}}_{\lambda}$. In what follows, we define these polynomials with their branching formula and recall a Cauchy identity involving these polynomials.

\begin{defn}
Let $\lambda$ and $\mu$ be partitions.  
The one-variable \emph{skew inhomogeneous $q$-Whittaker polynomial} is defined by
\begin{equation*}
 \mathbb{F}_{\lambda/\mu}(x) := \mathbf{1}_{\mu \prec \lambda}\,\,
 x^{|\lambda|-|\mu|}\,
 \prod_{r=1}^{\ell(\lambda)}
 (x;q)_{\mu_{r}-\lambda_{r+1}}\,
 \frac{(q;q)_{\lambda_{r}-\lambda_{r+1}}}{(q;q)_{\lambda_{r}-\mu_{r}}\,(q;q)_{\mu_{r}-\lambda_{r+1}}}.
\end{equation*}
The $n$-variable \emph{skew inhomogeneous $q$-Whittaker polynomials} are then defined inductively by
\begin{equation}\label{eq:branching_inhomo}
\mathbb{F}_{\lambda/\mu}(x_{1},\dots,x_{n})
:= \sum_{\mu \subseteq \nu \prec \lambda}
\mathbb{F}_{\nu/\mu}(x_{1},\dots,x_{n-1})\,
\mathbb{F}_{\lambda/\nu}(x_{n}).
\end{equation}
\end{defn}

The skew polynomials $\mathbb{F}_{\lambda/\mu}$ are symmetric and inhomogeneous.  
Their lowest-degree component coincides with the corresponding skew $q$-Whittaker polynomial.  
The non-skew polynomials $\mathbb{F}_{\lambda}$ also form a basis for the ring of symmetric polynomials in finitely many variables. An additional feature of these polynomials is that their highest degree increases with the number of variables, as illustrated by the following example:

\begin{ex}
We illustrate two basic examples of inhomogeneous $q$-Whittaker polynomials.  
First, for a positive integer $n$, the inhomogeneous $q$-Whittaker polynomial corresponding to the partition $\lambda = (1)$ satisfies
\[
\ff_{1/1}(x_1,\ldots, x_n) = 1 - \mathbb{F}_{\square}(x_{1},\dots,x_{n}) = \prod_{i=1}^{n} (1 - x_{i}).
\]
As another example, the inhomogeneous $q$-Whittaker polynomial in two variables corresponding to the partition $(2,0)$ is given by
\[
\mathbb{F}_{(2,0)}(x_{1},x_{2})
= x_{1}^{2}
   + (1+q)x_{1}x_{2}
   + x_{2}^{2}
   - (1+q)(x_{1}^{2}x_{2} + x_{1}x_{2}^{2})
   + q\,x_{1}^{2}x_{2}^{2}.
\]
\end{ex}

We now turn our attention to \emph{interpolation $q$-Whittaker polynomials}. These polynomials are again symmetric and inhomogeneous, and their highest-degree component coincides with the $q$-Whittaker polynomial. They satisfy certain interpolation properties; although these will not be used here, we refer the reader to~\cite{spin-K2024,spinWhitt-Muc} for further details.

\begin{defn}\label{def:interpolation}
Let $\lambda$ and $\mu$ be partitions.  
The dual one-variable \emph{skew interpolation $q$-Whittaker polynomial} is defined by
\begin{equation*}
\widetilde{\mathbb{F}}_{\lambda/\mu}(x)
:= \mathbf{1}_{\mu \prec \lambda}\,
   x^{|\lambda|-|\mu|}\,
   \prod_{r=1}^{\ell(\lambda)}
   (x^{-1};q)_{\lambda_{r}-\mu_{r}}\,
   \dfrac{(q;q)_{\mu_{r}-\mu_{r+1}}}{(q;q)_{\lambda_{r}-\mu_{r}}\,(q;q)_{\mu_{r}-\lambda_{r+1}}}.
\end{equation*}
The $n$-variable \emph{skew interpolation $q$-Whittaker polynomials} are then defined inductively by
\begin{equation*}
\widetilde{\mathbb{F}}_{\lambda/\mu}(x_{1},\dots,x_{n})
:= \sum_{\mu \subseteq \nu \prec \lambda}
   \widetilde{\mathbb{F}}_{\nu/\mu}(x_{1},\dots,x_{n-1})\,
   \widetilde{\mathbb{F}}_{\lambda/\nu}(x_{n}).
\end{equation*}
\end{defn}

\begin{ex}
We illustrate the interpolation $q$-Whittaker polynomial in two variables corresponding to the partition $(2,0)$:
\[
(q;q)_{2}\,\,\widetilde{\mathbb{F}}_{(2,0)}(x_{1},x_{2})
= (x_{1}-1)(x_{1}-q)
  + (1+q)(x_{1}-1)(x_{2}-1)
  + (x_{2}-1)(x_{2}-q).
\]
\end{ex}

\

We now recall some properties of the inhomogeneous $q$-Whittaker polynomials and the interpolation $q$-Whittaker polynomials. The first proposition gives a vanishing condition for skew polynomials, the second presents a Cauchy-type identity involving the inhomogeneous and interpolation families.

\begin{prop}[{\cite[Cor.~5.3]{K2024}}]\label{cor:vanishing_of_skew_whit}
The polynomials $\ff_{\lambda/\mu}(x_{1},\dots,x_{n})$,  and $\widetilde{\ff}_{\lambda/\mu}(x_{1},\dots,x_{n})$ vanish unless $\mu \subseteq \lambda$ and $\ell(\lambda)-\ell(\mu) \leq n$.   
\end{prop}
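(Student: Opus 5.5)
Both vanishing statements follow from the branching formulas by induction on $n$, using only the support of the one-variable factors. The key observation is that each one-variable polynomial $\ff_{\nu/\mu}(x)$ and $\widetilde{\ff}_{\nu/\mu}(x)$ carries the indicator $\mathbf{1}_{\mu\prec\nu}$, so it vanishes unless $\mu\prec\nu$. An interlacing $\mu\prec\nu$ means $\nu_1\ge\mu_1\ge\nu_2\ge\mu_2\ge\cdots$. This immediately gives two facts: $\mu\subseteq\nu$ (since $\nu_i\ge\mu_i$ for all $i$), and $\ell(\nu)\le\ell(\mu)+1$ (since $\nu_{i+1}\le\mu_i$, so $\nu_{\ell(\mu)+2}\le\mu_{\ell(\mu)+1}=0$). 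This settles the base case $n=1$ for both families simultaneously.

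For the inductive step, expand $\ff_{\lambda/\mu}(x_1,\dots,x_n)$ via \eqref{eq:branching_inhomo} as a sum over $\nu$ of $\ff_{\nu/\mu}(x_1,\dots,x_{n-1})\,\ff_{\lambda/\nu}(x_n)$, and treat $\widetilde{\ff}$ identically via its branching rule in Definition~\ref{def:interpolation}. By the inductive hypothesis, a nonzero summand requires $\mu\subseteq\nu$ and $\ell(\nu)-\ell(\mu)\le n-1$. By the base case, it also requires $\nu\prec\lambda$, which gives $\nu\subseteq\lambda$ and $\ell(\lambda)\le\ell(\nu)+1$. Chaining these yields $\mu\subseteq\lambda$ and $\ell(\lambda)-\ell(\mu)\le n$. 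If either condition fails, every summand is zero and the polynomial vanishes.

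A few bookkeeping points need care. The summation range in \eqref{eq:branching_inhomo} is written as $\mu\subseteq\nu\prec\lambda$, but the argument does not rely on that restriction; it works for an unrestricted sum over $\nu$ because the factors vanish anyway. One should also check that the product over $r\le\ell(\lambda)$ in the one-variable formulas introduces no issues at the boundary $r=\ell(\lambda)$, where $\lambda_{r+1}=0$. Since the prefactor already carries the indicator, vanishing only requires that indicator and is unaffected by the remaining product. I do not expect a genuine obstacle here; the only real content is the interlacing length bound $\ell(\nu)\le\ell(\mu)+1$.
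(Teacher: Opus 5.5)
Your proof is correct. Both one-variable formulas carry the factor $\mathbf{1}_{\mu\prec\lambda}$. Interlacing gives $\mu\subseteq\lambda$ and $\ell(\lambda)\le\ell(\mu)+1$, and induction through the branching rule chains these into the claimed support condition.

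The paper gives no argument of its own here; it simply imports the statement from \cite[Cor.~5.3]{K2024}. Your route is the natural self-contained one for this paper, because here $\ff_{\lambda/\mu}$ and $\widetilde{\ff}_{\lambda/\mu}$ are \emph{defined} by their branching rules. It shows that the vanishing is purely a support property of the interlacing indicators and does not depend on the particular weights. The same argument therefore also covers $W_{\lambda/\mu}$, whose definition carries the same indicator.

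Your induction in fact yields slightly more than stated. A nonzero term requires a chain $\mu=\nu^{(0)}\prec\nu^{(1)}\prec\cdots\prec\nu^{(n)}=\lambda$ of horizontal strips, so $\lambda'_j-\mu'_j\le n$ for every column $j$. The case $j=1$ is exactly $\ell(\lambda)-\ell(\mu)\le n$, which is all that the proof of Theorem~\ref{thm:finite_LR_rule}(a) uses.
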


\begin{prop}[{\cite[Theorem~B]{K2024}; see also \cite{spinWhitt-Muc}}]\label{prop:cauchy}
The inhomogeneous $q$-Whittaker polynomials and the interpolation $q$-Whittaker polynomials satisfy
\begin{equation}\label{eq:cauchy}
\sum_{\lambda}
  \ff_{\lambda/\mu}(x_{1},\dots,x_{n})\,
  \widetilde{\ff}_{\lambda/\nu}(y_{1},\dots,y_{m})\,=\, \prod_{j=1}^{m}\prod_{i=1}^{n}
    \frac{(x_{i};q)_{\infty}}{(x_{i}y_{j};q)_{\infty}}
   \sum_{\lambda}
    \widetilde{\ff}_{\mu/\lambda}(y_{1},\dots,y_{m})\,
    \ff_{\nu/\lambda}(x_{1},\dots,x_{n}).
\end{equation}
with the left hand sum taken over all partitions that contain both $\mu$ and $\nu$, and on the right hand side sum is taken over all partition that are contained in both $\mu$ and $\nu$.
\end{prop}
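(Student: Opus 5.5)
The plan is to reduce the identity to the case $n=m=1$ and then bootstrap to many variables using the branching rules \eqref{eq:branching_inhomo} and the analogous branching rule for $\widetilde{\ff}$. Write $\Pi(x;y):=(x;q)_\infty/(xy;q)_\infty$ for the single-pair factor. We work with $|q|<1$ and $|x_i|,|y_j|<1$, or formally in power series; every sum below is then absolutely convergent, or has finitely many terms in each degree.

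\textbf{Step 1: the one-variable skew Cauchy identity.} The first step is to prove
\begin{equation*}
\sum_{\lambda}\ff_{\lambda/\mu}(x)\,\widetilde{\ff}_{\lambda/\nu}(y)\;=\;\Pi(x;y)\sum_{\kappa}\widetilde{\ff}_{\mu/\kappa}(y)\,\ff_{\nu/\kappa}(x).
\end{equation*}

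\textbf{Step 2: reduction to Step 1.} Assume Step 1. Expand $\ff_{\lambda/\mu}(x_1,\dots,x_n)$ and $\widetilde{\ff}_{\lambda/\nu}(y_1,\dots,y_m)$ by branching into chains of one-variable factors. Then apply Step 1 repeatedly to an adjacent pair consisting of one $x$-step and one $y$-step. This is the usual ``commute the rows past each other'' argument.
\begin{itemize}
\item Start by moving $x_n$ past $y_1,\dots,y_m$. Each exchange converts a sum over a common upper partition into a sum over a common lower partition and contributes one factor $\Pi(x_n;y_j)$.
\item Then do the same for $x_{n-1}$, and continue down to $x_1$.
\end{itemize}
After all $nm$ exchanges, the one-variable chains reassemble by branching into $\widetilde{\ff}_{\mu/\lambda}(y_1,\dots,y_m)\,\ff_{\nu/\lambda}(x_1,\dots,x_n)$. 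The accumulated prefactor is $\prod_{i,j}\Pi(x_i;y_j)$. Reordering the summations is justified by absolute convergence, and Proposition~\ref{cor:vanishing_of_skew_whit} guarantees the inner sums are well defined.

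\textbf{Step 3: proving Step 1.} I would follow the lattice-model route of Borodin--Korotkikh. Realize $\ff_{\lambda/\mu}(x)$ and $\widetilde{\ff}_{\lambda/\nu}(y)$ as one-row partition functions of higher-spin vertex models, with the column occupations encoding multiplicities. The factors $(x;q)_{\mu_r-\lambda_{r+1}}$ and $(y^{-1};q)_{\lambda_r-\mu_r}$ are then local vertex weights.
\begin{itemize}
\item Find a cross vertex ($R$-matrix) satisfying the Yang--Baxter equation with these two row types.
\item Run the train argument: attach the cross vertex at the left boundary and push it through all columns to the right.
\item Far to the right, all columns are empty (or, at the origin, saturated), so only a boundary weight survives. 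Summing over the internal spin of that boundary cross vertex should produce exactly $\Pi(x;y)$, via the $q$-binomial theorem $\sum_k \frac{(a;q)_k}{(q;q)_k}z^k=\frac{(az;q)_\infty}{(z;q)_\infty}$.
\end{itemize}

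\textbf{Fallback for Step 1.} If the $R$-matrix is hard to pin down, I would instead verify the identity directly. Fix $\mu,\nu$ and sum over $\lambda$ with $\mu\prec\lambda$ and $\nu\prec\lambda$. The interlacing constraints decouple column by column into local sums of products of $q$-binomials and $q$-Pochhammers. Each local sum should reduce, by $q$-Chu--Vandermonde, to the corresponding local sum over $\kappa$, with the leftover first-row sum producing $\Pi(x;y)$.

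The main obstacle is Step 3. Either one must construct the correct cross weights and check the Yang--Baxter equation for these degenerate ($a_i=0,b_i=1$ versus $a_i=1,b_i=0$) spin weights, or one must organize the direct verification so that the interlacing constraints for $\lambda$ and for $\kappa$ match column by column. Steps 1--2 are routine once the single exchange relation is in hand.
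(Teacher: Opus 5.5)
\textbf{Step 2 is sound.} Each exchange turns a peak into a valley without ever swapping two $x$-steps or two $y$-steps. So after the $nm$ exchanges the chains reassemble, by the branching rules alone, into $\widetilde{\ff}_{\mu/\lambda}(y_1,\dots,y_m)\,\ff_{\nu/\lambda}(x_1,\dots,x_n)$. All rearrangements are legitimate in $\mathbb{Q}(q)[y][[x]]$, because in each $x$-degree only finitely many intermediate partitions contribute.

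\textbf{The gap is Step 1, which is unproved.} Step 1 is literally the case $n=m=1$ of the statement, so after your reduction it carries all of the content. The lattice-model route is only named. You do not write down:
\begin{itemize}
\item vertex weights realizing $\ff_{\lambda/\mu}(x)$ and $\widetilde{\ff}_{\lambda/\nu}(y)$;
\item a cross vertex, or the Yang--Baxter relation it must satisfy with these two row types;
\item the boundary weights whose sum should give $(x;q)_\infty/(xy;q)_\infty$.
\end{itemize}
Showing that such a local relation exists for this degenerate pair of rows is the theorem itself, not a routine step. The paper does not prove the identity either; it imports it from \cite[Theorem~B]{K2024}.

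\textbf{The fallback does not work as described.} The summation ranges do decouple: $\lambda_{r+1}$ and $\kappa_r$ both range independently over $[\max(\mu_{r+1},\nu_{r+1}),\min(\mu_r,\nu_r)]$. But neither side's summand factorizes over these variables.
\begin{itemize}
\item On the left, $\ff_{\lambda/\mu}(x)$ couples $\lambda_r$ and $\lambda_{r+1}$ through $(q;q)_{\lambda_r-\lambda_{r+1}}$.
\item On the right, $\widetilde{\ff}_{\mu/\kappa}(y)$ couples $\kappa_r$ and $\kappa_{r+1}$ through $(q;q)_{\kappa_r-\kappa_{r+1}}$.
\item Only $\widetilde{\ff}_{\lambda/\nu}(y)$ and $\ff_{\nu/\kappa}(x)$ factor over single parts.
\end{itemize}
So there are no independent local sums to match by $q$-Chu--Vandermonde.

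One part of the fallback does work. Sum out $\lambda_1$ first: write the relevant ratio of $(q;q)$-factors as a polynomial in $q^{\lambda_1}$ and apply the $q$-binomial theorem. This yields $(x;q)_\infty/(xy;q)_\infty$ times a polynomial, since the denominators $(x;q)_i$ that appear are cancelled by $(x;q)_{\mu_1-\lambda_2}$. What remains, however, is an identity between two finite chain (transfer-matrix) sums that are coupled in different ways. That remaining identity is exactly what a verified exchange relation would supply.

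To close the gap you need an explicit mechanism for this identity. Either give cross weights together with a checked Yang--Baxter equation, or give a sequential summation with an explicit local kernel. Otherwise you must cite the result, as the paper does.
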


\subsection{Inhomogeneous Hall--Littlewood polynomials}
In this subsection, we briefly recall generalizations of Hall--Littlewood polynomials. Borodin first introduced a one-parameter deformation in~\cite{spin-Bor2017}, referred to as \emph{spin Hall--Littlewood polynomials}, and another generalization satisfying interpolation properties was developed in~\cite{factHL}, referred to as \emph{factorial Hall--Littlewood polynomials}. A further extension, introduced in~\cite{Garbali2017Newgeneralisation}, contains both of these families as special cases. Our interest lies in a particular specialization of this latter family. In the recent paper~\cite{GWZJ2025}, the authors discuss Cauchy identities satisfied by these polynomials, and throughout this subsection we follow their presentation.

\begin{defn}
Let $\lambda$ and $\mu$ be partitions.  
The one-variable skew inhomogeneous Hall--Littlewood polynomial is defined by
\begin{equation*}
j_{\lambda/\mu}(x;t)
   := \kappa_{\lambda/\mu}\,
      x^{r(\lambda/\mu)}\,
      \prod_{i\in F_{\lambda/\mu}} \bigl(x + t^{m_{i}(\lambda)}\bigr),
\end{equation*}
where
\begin{align*}
F_{\lambda/\mu}
   &= \bigl\{\, i \;\big|\;
        \text{the $i$-th and $(i+1)$-st columns of $\lambda/\mu$ are nonempty} \,\bigr\}, \\
r(\lambda/\mu)
   &= \text{the number of nonempty rows in the skew diagram }\lambda/\mu.
   \end{align*}
   and
\[
\kappa_{\lambda/\mu}
   \,= \,\prod_{i: m_{i}(\lambda)=m_{i}(\mu)+1}
      \bigl(1 - t^{m_{i}(\lambda)}\bigr).
\]      
The $n$-variable skew inhomogeneous Hall--Littlewood polynomials are then defined inductively by
\begin{equation*}
j_{\lambda/\mu}(x_{1},\dots,x_{n};t)
    := \sum_{\mu \subseteq \nu \prec \lambda}
       j_{\nu/\mu}(x_{1},\dots,x_{n-1};t)\,
       j_{\lambda/\nu}(x_{n};t).
\end{equation*}
\end{defn}

These polynomials are inhomogeneous, and their highest-degree component coincides with the Hall--Littlewood polynomials $Q_{\lambda/\mu}(x_{1},\dots,x_{n})$. They are also referred to as factorial Hall--Littlewood polynomials~\cite{factHL}, and in that setting they additionally satisfy interpolation properties. Since we focus on a particular specialization of this family, we choose to refer to them as \emph{inhomogeneous Hall--Littlewood polynomials} in order to maintain consistency with the terminology used for \emph{inhomogeneous $q$-Whittaker polynomials}.

\begin{ex}
We illustrate the inhomogeneous Hall--Littlewood polynomial in two variables corresponding to the partition $(2,0)$, together with its expansion in the Hall--Littlewood basis $\{Q_\lambda\}$:
\begin{align*}
\,\,j_{(2,0)}(x_{1},x_{2};t)
&=\,(1-t)\,\left(x_{1}(x_{1}+1)+(1-t)x_{1}x_{2}+x_{2}(x_{2}+1)\right)\\
&=\,Q_{2}(x_{1},x_{2};t)\,+\,Q_{1}(x_{1},x_{2};t)
\end{align*}
\end{ex}

\

We now introduce the dual polynomials with which the inhomogeneous Hall--Littlewood polynomials satisfy a Cauchy identity. We first define these polynomials via a branching formula, and then recall the corresponding Cauchy identities together with a proposition which discuss the vanishing of them.

\begin{defn}
Let $\mu \prec \lambda$ be partitions.  
The one-variable skew function $J_{\lambda/\mu}(x)$ is defined by
\begin{equation*}
J_{\lambda/\mu}(x;t)
   :=\,\mathbf{1}_{\mu\prec\lambda}\, \kappa_{\lambda/\mu}\,
      \left(\frac{1}{1+x}\right)^{r(\mu/\widetilde{\lambda})}
      \left(\frac{x}{1+x}\right)^{|\lambda/\mu|}
      \prod_{i \in E_{\lambda/\mu}} \bigl(1 + x\, t^{m_{i}(\lambda)}\bigr),
\end{equation*}
The quantities appearing above are given by
\begin{align*}
E_{\lambda/\mu}
   &= \bigl\{\, i \;\big|\;
        \text{the $i$-th and $(i+1)$-st columns of $\lambda/\mu$ are empty, and }
        m_{i}(\lambda) \neq 0 \,\bigr\}, \\
        \widetilde{\lambda}
   &= (\lambda_{2}, \lambda_{3}, \dots)\ \text{(the partition obtained by removing the first part of $\lambda$)},\\
r(\mu/\widetilde{\lambda})
   &= \text{the number of nonempty rows in the skew diagram }\mu/\widetilde{\lambda}.
\end{align*}

The $n$-variable dual skew inhomogeneous Hall--Littlewood polynomials are then defined inductively by
\begin{equation*}
J_{\lambda/\mu}(x_{1},\dots,x_{n};t)
    := \sum_{\mu \subseteq \nu \prec \lambda}
       J_{\nu/\mu}(x_{1},\dots,x_{n-1};t)\,
       J_{\lambda/\nu}(x_{n};t).
\end{equation*}
\end{defn}

\begin{prop}\label{cor:vanishing_of_skew_HL}
The polynomials $j_{\lambda/\mu}(x_{1},\dots,x_{n};t)$ and $J_{\lambda/\mu}(x_{1},\dots,x_{n};t)$ vanish unless $\mu \subseteq \lambda$ and $\ell(\lambda)-\ell(\mu) \leq n$.   
\end{prop}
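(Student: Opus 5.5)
The statement to prove is Proposition~\ref{cor:vanishing_of_skew_HL}: $j_{\lambda/\mu}(x_1,\dots,x_n;t)$ and $J_{\lambda/\mu}(x_1,\dots,x_n;t)$ vanish unless $\mu\subseteq\lambda$ and $\ell(\lambda)-\ell(\mu)\le n$. The plan is to work directly from the branching definitions, as in the $q$-Whittaker case (Proposition~\ref{cor:vanishing_of_skew_whit}). Both families are built from one-variable weights that carry the factor $\mathbf{1}_{\mu\prec\lambda}$. For $j$ the indicator is implicit: the one-variable skew polynomial is understood to be supported on horizontal strips, via $\kappa_{\lambda/\mu}$ and the branching sum $\mu\subseteq\nu\prec\lambda$. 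Iterating the branching formula $n$ times therefore expresses each $n$-variable skew polynomial as
\[
j_{\lambda/\mu}(x_1,\dots,x_n;t)=\sum_{\mu=\nu^{(0)}\prec\nu^{(1)}\prec\cdots\prec\nu^{(n)}=\lambda}\ \prod_{k=1}^{n} j_{\nu^{(k)}/\nu^{(k-1)}}(x_k;t),
\]
and likewise for $J$.

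The first step is an induction on $n$ showing that a nonzero term requires such an interlacing chain from $\mu$ to $\lambda$. The base case $n=1$ is exactly the indicator $\mathbf{1}_{\mu\prec\lambda}$. In the inductive step, the branching sum ranges only over $\nu\prec\lambda$ with $\mu\subseteq\nu$, and the $(n-1)$-variable factor vanishes unless there is a chain from $\mu$ to $\nu$.

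The second step derives the two conditions from the chain. Horizontal strips satisfy $\nu\prec\lambda\Rightarrow\nu\subseteq\lambda$, so composing the chain gives $\mu\subseteq\lambda$. The interlacing $\lambda_1\ge\nu_1\ge\lambda_2\ge\nu_2\ge\cdots$ forces $\nu_{i}\ge\lambda_{i+1}>0$ for $i<\ell(\lambda)$, hence $\ell(\nu)\ge\ell(\lambda)-1$. Each step of the chain therefore increases the length by at most one, and summing over the $n$ steps gives $\ell(\lambda)-\ell(\mu)\le n$.

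The main obstacle is bookkeeping rather than anything conceptual. One must check that no one-variable weight is nonzero off horizontal strips. For $J$ this is explicit. For $j$ the written formula has no indicator, so I would verify that the definition intends support on $\mu\prec\lambda$; the branching sum over $\nu\prec\lambda$ already enforces this for every factor except the innermost $(n-1)$-variable one, and that one is handled by the induction hypothesis. Once the support is confirmed, the rest is immediate.
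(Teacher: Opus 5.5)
Your argument is correct, and it is the natural one. The paper states this proposition without any proof; the analogous vanishing for $\ff$ and $\widetilde{\ff}$ is simply quoted from the literature. Your argument supplies the missing justification, for $j$ and $J$ alike: unroll the branching rule into an interlacing chain $\mu=\nu^{(0)}\prec\cdots\prec\nu^{(n)}=\lambda$, then use $\nu\prec\lambda\Rightarrow\nu\subseteq\lambda$ and $\ell(\nu)\ge\ell(\lambda)-1$.

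One correction to your bookkeeping concerns the horizontal-strip support of the one-variable $j_{\lambda/\mu}(x;t)$. It does not come from $\kappa_{\lambda/\mu}$; for instance $\kappa_{(1,1)/\varnothing}=1$. Nor is it a mere formality. Read literally, the displayed formula gives $j_{(1,1)/\varnothing}(x;t)=x^{2}\neq 0$, which already contradicts the statement at $n=1$.

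Your induction cannot absorb this. For $n\ge 2$ it handles the $(n-1)$-variable factor, but that only pushes the innermost factor down to exactly this base case. So the proposition must be read with $\mathbf{1}_{\mu\prec\lambda}$ built into $j_{\lambda/\mu}(x;t)$, as it is for $Q_{\lambda/\mu}(x;t)$ and $J_{\lambda/\mu}(x;t)$. Equivalently, take the zero-variable function to be $\delta_{\lambda\mu}$ and apply the branching rule from $n=1$. This is clearly the intended convention, and with it your proof is complete.
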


\begin{prop}[{\cite[Theorem~3.11]{GWZJ2025}}]\label{prop:HL_cauchy}
The inhomogeneous $q$-Whittaker polynomials and its duals satisfy the following identity:
\begin{multline}\label{eq:HL_cauchy}
\sum_{\lambda}\,\dfrac{b_{\mu}(t)}{b_{\lambda}(t)}\,
  j_{\lambda/\mu}(x_{1},\dots,x_{n};t)\,
  J_{\lambda/\nu}(y_{1},\dots,y_{m};t) \,= \\[0.2em]\,\prod_{j=1}^{m}\prod_{i=1}^{n}
    \dfrac{1-t x_{i}y_{j}}{1-x_{i}y_{j}}\,
   \sum_{\lambda}\,\dfrac{b_{\lambda}(t)}{b_{\nu}(t)}\,
    j_{\nu/\lambda}(x_{1},\dots,x_{n};t)\,  J_{\mu/\lambda}(y_{1},\dots,y_{m};t)\,
\end{multline}
where $b_{\lambda}(t)=\prod^{\ell(\lambda)}_{i=1}(t;t)_{m_{i}(\lambda)}$, and the left hand sum is taken over all partitions that contain both $\mu$ and $\nu$, and on the right hand side sum is taken over all partition that are contained in both $\mu$ and $\nu$.
\end{prop}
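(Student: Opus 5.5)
The plan is to reduce the identity to the case $n=m=1$ and then pass to general $n,m$ by a standard ``commutation'' induction using the branching rules. Everything is read as formal power series in the $y_j$ (equivalently, assume $|x_iy_j|<1$). The left-hand side is then a well-defined, possibly infinite, sum over $\lambda\supseteq\mu\cup\nu$. The right-hand side is a finite sum over $\lambda\subseteq\mu\cap\nu$.

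\textbf{Step 1: the one-variable identity.} For fixed $\mu,\nu$, I will prove
\begin{equation*}
\sum_{\lambda}\frac{b_\mu(t)}{b_\lambda(t)}\, j_{\lambda/\mu}(x;t)\,J_{\lambda/\nu}(y;t)=\frac{1-txy}{1-xy}\sum_{\kappa}\frac{b_\kappa(t)}{b_\nu(t)}\, j_{\nu/\kappa}(x;t)\,J_{\mu/\kappa}(y;t).
\end{equation*}
The cleanest route is the lattice-model interpretation of \cite{GWZJ2025}. There the one-variable $j$ and $J$ are row transfer matrices built from (higher-spin, column-multiplicity) vertex weights, and the factors $b_\mu/b_\lambda$ arise from the normalization of the dual row. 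One exhibits a cross ($R$-)vertex and checks the local Yang--Baxter (RLL) relation. Dragging the cross through the doubled row then swaps the $x$-row and the $y$-row and produces the boundary factor $\frac{1-txy}{1-xy}$, which is a geometric series in the cross-vertex weights at the left boundary. An alternative, more hands-on route is also available. The conditions $\mu\prec\lambda$, $\nu\prec\lambda$ and the weights $\kappa_{\lambda/\mu}$, $x^{r}$, $\prod_{i\in F}(x+t^{m_i})$ and $\prod_{i\in E}(1+xt^{m_i})$ are all local in the column index $i$, through the multiplicities $m_i$. One would therefore factor both sides as products over the maximal blocks of columns where $\mu$ and $\nu$ differ, and verify a finite local identity for each block. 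The single infinite geometric factor comes from the unbounded first row $\lambda_1$.

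\textbf{Step 2: induction.} Expand $j_{\lambda/\mu}(x_1,\dots,x_n)$ and $J_{\lambda/\nu}(y_1,\dots,y_m)$ by the branching formulas into chains of one-variable pieces. Then apply Step 1 to the innermost adjacent pair, that is, the last $x$-piece against the first $y$-piece, which moves one $x_i$ past one $y_j$. Repeating this $nm$ times yields $\prod_{i,j}\frac{1-tx_iy_j}{1-x_iy_j}$ times the right-hand side. The ratios $b_\cdot/b_\cdot$ telescope along each chain: the $b_\mu/b_\lambda$ convention is chosen exactly so that the intermediate partitions' normalizations cancel. Proposition~\ref{cor:vanishing_of_skew_HL} guarantees that at each stage the intermediate sums are finite, apart from the controlled power series in $y$. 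This justifies the rearrangements.

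\textbf{Main obstacle.} The main obstacle is Step 1, specifically the boundary behaviour. The factor $(1+x)^{-r(\mu/\widetilde\lambda)}$ in $J$ and the inhomogeneous terms $x+t^{m_i}$ in $j$ must be matched exactly so that all contributions except one geometric series cancel. I would first attempt the column-local factorization and check it on small cases, such as $\mu=\nu=\varnothing$ and $\mu=\nu=(1)$, to pin down the local identity. I would fall back to the Yang--Baxter argument if the case analysis becomes unwieldy.
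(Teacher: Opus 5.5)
Your plan is correct and is essentially the argument behind this statement. The paper gives no proof of its own and imports it from \cite[Theorem~3.11]{GWZJ2025}, where it comes from a Yang--Baxter (train) argument for the double-row lattice model; your passage from the one-variable exchange relation to $n,m$ variables is standard and sound, since $\frac{b_\mu}{b_\lambda}j_{\lambda/\mu}$ obeys the same branching rule and you work with formal power series in the $y_j$. Do not rely on the ``column-local block factorization'' alternative, though: the double-row sum is a transfer-matrix product whose horizontal edge states couple all columns (for $\mu=\nu=(1)$ no column distinguishes $\mu$ from $\nu$, yet the finite right-hand sum is $\frac{1+ty+(1-t)xy}{1+y}\neq 1$), so the local identity you actually need is precisely the $RLL$ relation.
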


\begin{ex}
We compute the single-variable case of the above Cauchy identity. Using the branching formulas, for $k\geq 1$ we obtain:
\begin{align*}
  j_{k}(x;t) = (1-t)\,x(1+x)^{k-1}, \quad
  J_{k}(y;t) = (1-t)\,\left(\frac{y}{1+y}\right)^{k}.
\end{align*}
We then compute:
\begin{multline*}
1+\sum^{\infty}_{i=1}\dfrac{1}{(1-t)}\,\left((1-t)\,x(x+1)^{i-1}\right)\,\left((1-t)\left(\frac{y}{(1+y)}\right)^{i}\right)\,\\
=\,1+(1-t)\frac{xy}{(1+y)}\left(\sum_{i=1}1+\left(\dfrac{(1+x)y}{(1+y)}\right)^{i}\right)\,=\,\dfrac{1-txy}{1-xy}
\end{multline*}
\end{ex}
\

\begin{prop}\label{prop:dual_cauchy}
The inhomogeneous $q$-Whittaker polynomials and the inhomogeneous Hall--Littlewood polynomials satisfy
\begin{multline}\label{eq:dual_cauchy}
\sum_{\lambda}
 \dfrac{b_{\mu}(t)}{b_{\lambda}(t)}\,
  j_{\lambda/\mu}(y_{1},\dots,y_{n};t)\, \ff_{\lambda'/\nu'}(x_{1},\dots,x_{n};t)\,
   \,=\\
   \,\prod_{j=1}^{m}\prod_{i=1}^{n}
    \bigl(1 + x_{i}y_{j}\bigr)
   \sum_{\lambda}
    \dfrac{b_{\lambda}(t)}{b_{\nu}(t)}\,
  j_{\nu/\lambda}(y_{1},\dots,y_{n};t)\,
    \ff_{\mu'/\lambda'}(x_{1},\dots,x_{n};t).
\end{multline}
where $b_{\lambda}(t):=\prod^{\ell(\lambda)}_{i=1}(t;t)_{m_{i}(\lambda)}$.
\end{prop}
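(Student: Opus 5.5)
The plan is to reduce the identity to the case of a single $x$-variable and a single $y$-variable, and then bootstrap to many variables using the branching rules. This is the same scheme by which skew Cauchy identities are usually derived, including Proposition~\ref{prop:cauchy} and Proposition~\ref{prop:HL_cauchy}. Concretely, the first step is to prove the \emph{one-variable exchange relation}
\begin{equation*}
\sum_{\lambda}\frac{b_{\mu}(t)}{b_{\lambda}(t)}\, j_{\lambda/\mu}(y;t)\,\ff_{\lambda'/\nu'}(x)
=(1+xy)\sum_{\lambda}\frac{b_{\lambda}(t)}{b_{\nu}(t)}\, j_{\nu/\lambda}(y;t)\,\ff_{\mu'/\lambda'}(x),
\end{equation*}
where $\ff$ is taken with parameter $t$ in place of $q$, for all partitions $\mu,\nu$.

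Granting this relation, the general statement follows by induction on the numbers of variables. We expand $j_{\lambda/\mu}(y_1,\dots,y_m)$ via the branching rule for $j$, and $\ff_{\lambda'/\nu'}(x_1,\dots,x_n)$ via \eqref{eq:branching_inhomo}. We then move one $y$-variable past one $x$-variable at a time; each move picks up one factor $(1+x_iy_j)$ and converts the sum over partitions containing $\mu,\nu$ into a sum over partitions contained in them. The weights $b_\mu/b_\lambda$ telescope along each chain of intermediate partitions, since every swap has the same form. Proposition~\ref{cor:vanishing_of_skew_HL} and Proposition~\ref{cor:vanishing_of_skew_whit} guarantee that all sums are finite, so no convergence issues arise. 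This step is routine bookkeeping.

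The real work is the one-variable relation. Here $j_{\lambda/\mu}(y)\neq0$ forces $\lambda/\mu$ to be a horizontal strip. Likewise $\ff_{\lambda'/\nu'}(x)\neq 0$ forces $\lambda'/\nu'$ to be a horizontal strip, that is, $\lambda/\nu$ is a vertical strip. The sum is therefore over $\lambda\supseteq\mu\cup\nu$ obtained by adding a horizontal strip to $\mu$ and a vertical strip to $\nu$, and both sides are finite sums of explicit products. I would organize both sides by the multiplicities $m_i(\cdot)$, since the HL weights $\kappa$, $b$ and the sets $F_{\lambda/\mu}$ depend only on multiplicities. The $q$-Whittaker weights of $\ff_{\lambda'/\nu'}$, written in terms of conjugate parts, become products over the same indices $i$, because $\lambda'_i-\lambda'_{i+1}=m_i(\lambda)$.

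After this rewriting I expect both sides to factor into local contributions at each column height $i$, with a single global factor $1+xy$. The identity should then reduce to a small number of elementary $t$-binomial identities for each local configuration, of the shape $(x;t)_{a}$ times $t$-binomials against $(1-t^{m})$ and $(y+t^{m})$. This is in the spirit of the exchange relation behind the dual Cauchy identity for $W$ and $Q$ under $\omega_{t,0}$ in \eqref{eq:omega_q}. As a consistency check, the lowest-degree parts in $x$ and the highest-degree parts in $y$ must reproduce the homogeneous dual Cauchy identity $\sum Q_\lambda(y;t)W_{\lambda'}(x;t)=\prod(1+x_iy_j)$.

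The main obstacle is precisely this local factorization. The factors $(x;t)_{\mu_r-\lambda_{r+1}}$ in $\ff$ and $(y+t^{m_i})$ in $j$ couple neighbouring columns, so a naive column-by-column factorization may fail. My first attempt would be to realize both one-variable weights as vertex weights of a row-to-row transfer matrix. The exchange relation would then follow from a Yang--Baxter (railroad) argument with a cross vertex of weight $1+xy$, as in \cite{GWZJ2025}. If that is too cumbersome, the fallback is to verify the identity directly for the finitely many local configurations of a column block, by induction on the number of distinct part sizes.
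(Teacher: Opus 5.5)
\textbf{Verdict: genuine gap.} Your reduction is correct, but the one-variable exchange relation it reduces to is the whole content of the proposition, and you do not prove it.

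Your reduction step is fine. Set $\tilde j_{\lambda/\mu}:=\frac{b_\mu(t)}{b_\lambda(t)}\,j_{\lambda/\mu}$. Then the weights telescope along branching chains, and the right-hand weight $\frac{b_\lambda(t)}{b_\nu(t)}j_{\nu/\lambda}$ is exactly $\tilde j_{\nu/\lambda}$. All intermediate sums are finite, because $\ell(\lambda)\le \ell(\mu)+m$ and $\lambda_1\le \nu_1+n$. In operator form, let $\langle\lambda|U(y)|\mu\rangle=\tilde j_{\lambda/\mu}(y;t)$ and $\langle\nu|D(x)|\lambda\rangle=\ff_{\lambda'/\nu'}(x;t)$. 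The whole proposition is then the relation $D(x)U(y)=(1+xy)\,U(y)D(x)$ iterated $mn$ times.

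The gap is that this one-variable relation is just the case $m=n=1$ of the statement. You only state an expectation of a ``local factorization'' and list two strategies, so the proposal proves the routine part and leaves the entire content open. Nothing in the proposal supports the hoped-for factorization either:
\begin{itemize}
\item the sum over $\lambda$ runs over shapes that are simultaneously a horizontal strip over $\mu$ and a vertical strip over $\nu$;
\item the $\ff$-weight depends on $m_r(\lambda)=\lambda'_r-\lambda'_{r+1}$ and on $\nu'_r-\lambda'_{r+1}$;
\item the $j$-weight depends on the set $F_{\lambda/\mu}$ of adjacent nonempty columns.
\end{itemize}
So you face a nearest-neighbour chain sum, not a product of independent local pieces. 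What is needed is an actual argument. One option is to realize $U(y)$ and $D(x)$ as row transfer matrices of a single vertex model on the multiplicity space, and verify a local Yang--Baxter relation whose cross vertex contributes $1+xy$. Another is an explicit transfer-matrix evaluation of both sides.

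For context, the paper gives no proof of this proposition. It is presumably imported, like \eqref{eq:HL_cauchy}, from the lattice-model results of \cite{GWZJ2025}, so your Yang--Baxter route is the natural one, but it must be carried out. Within this paper you cannot shortcut it by applying $\omega_{0,t}$ to \eqref{eq:HL_cauchy} via Proposition~\ref{prop:omega_inhomo}, because that proposition is deduced from the present identity.

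If you attempt a direct check, note that the one-variable $j_{\lambda/\mu}$ as printed, with $r(\lambda/\mu)$ the number of nonempty rows, is inconsistent with the relation. Your exchange relation already fails for $\mu=\nu=(1)$:
\begin{itemize}
\item comparing coefficients forces $j_{21/1}(y;t)=(1-t)\,y\,(y+t)$;
\item the printed formula gives $(1-t)\,y^2(y+t)$, which even has the wrong top degree.
\end{itemize}
The exponent $r(\lambda/\mu)$ must count maximal runs of consecutive nonempty columns. This is forced by $r(\lambda/\mu)+|F_{\lambda/\mu}|=|\lambda/\mu|$, i.e.\ by the top component being $Q_{\lambda/\mu}$.

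Finally, your homogeneous consistency check should read $\sum_\lambda b_\lambda(t)^{-1}Q_\lambda(y;t)\,W_{\lambda'}(x;t)=\prod_{i,j}(1+x_iy_j)$, with $P_\lambda$ in place of $Q_\lambda$.
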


\medskip
We end this section with a final proposition that discusses the image of $\ff_{\lambda}$ under $\omega_{q}:=\omega_{q,0}$ map. 
\begin{prop}\label{prop:omega_inhomo}
The map $\omega_{q}$ relates inhomogeneous $q$-Whittaker polynomials and dual inhomogeneous Hall--Littlewood polynomials as follows:
\begin{equation}\label{eq:omega_inhomo}
\omega_{q}(F_{\lambda/\mu})\,=\,J_{\lambda'/\mu'}.    
\end{equation}
\end{prop}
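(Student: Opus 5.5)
The plan is to prove the identity $\omega_{q}(\ff_{\lambda/\mu})=J_{\lambda'/\mu'}$ by comparing Cauchy identities, so that duality does the work. I will not compute $\omega_q$ on the branching formula directly.

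First, I rewrite Proposition~\ref{prop:HL_cauchy} at $t=q$ in the form
\[
\sum_{\lambda}\frac{b_{\mu}(q)}{b_{\lambda}(q)}\, j_{\lambda/\mu}(\mathbf{y};q)\, J_{\lambda/\nu}(\mathbf{x};q) = \prod_{i,j}\frac{1-qx_iy_j}{1-x_iy_j}\sum_{\lambda}\frac{b_{\lambda}(q)}{b_{\nu}(q)}\, j_{\nu/\lambda}(\mathbf{y};q)\, J_{\mu/\lambda}(\mathbf{x};q).
\]
Next, I note that under $\omega_{q}=\omega_{q,0}$ acting on the $\mathbf{x}$ variables, the kernel $\prod_{i,j}(1-qx_iy_j)/(1-x_iy_j)$ is mapped to $\prod_{i,j}(1+x_iy_j)$. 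This is checked on power sums: the logarithm of the first kernel is $\sum_n \frac{1-q^n}{n}p_n(\mathbf{x})p_n(\mathbf{y})$, and $\omega_q$ multiplies $p_n$ by $(-1)^{n-1}/(1-q^n)$, which turns it into $\sum_n\frac{(-1)^{n-1}}{n}p_n(\mathbf{x})p_n(\mathbf{y})$.

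Then I apply $\omega_q$ in $\mathbf{x}$ to the dual Cauchy identity, Proposition~\ref{prop:dual_cauchy} at $t=q$, which pairs $j_{\lambda/\mu}(\mathbf{y})$ with $\ff_{\lambda'/\nu'}(\mathbf{x})$ against the kernel $\prod(1+x_iy_j)$. Both identities now have the same kernel after the transformation and the same $j$-functions with the same $b$-weights. The difference $\omega_q(\ff_{\lambda'/\nu'})-J_{\lambda/\nu}$ therefore satisfies a homogeneous system.

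Uniqueness comes from the triangularity and linear independence of the $j_{\lambda/\mu}(\mathbf{y})$ in $\mathbf{y}$. Specialize $\mu=\emptyset$ first: then the right-hand sums collapse to $\lambda=\emptyset$, giving
\[
\sum_\lambda \frac{1}{b_\lambda}\, j_{\lambda}(\mathbf{y})\, X_{\lambda/\nu}=\text{kernel}\cdot \frac{1}{b_\nu}\, j_\nu(\mathbf{y})
\]
for both choices of $X$. Since $\{j_\lambda\}$ are linearly independent (their top-degree components are the Hall--Littlewood $Q_\lambda$), taking coefficients gives $\omega_q(\ff_{\lambda'/\nu'})=J_{\lambda/\nu}$. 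Relabeling $\lambda\mapsto\lambda'$ yields \eqref{eq:omega_inhomo}.

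The main obstacle is making the coefficient extraction rigorous. The functions live in the completion $\hat\Lambda$, and $\omega_q$ must be extended continuously, which is fine because it is degree preserving. The sums over $\lambda$ are infinite, so I will work with finitely many $\mathbf{y}$ variables and use the vanishing statements Propositions~\ref{cor:vanishing_of_skew_whit} and~\ref{cor:vanishing_of_skew_HL} to control them. I also have to justify that an infinite family of inhomogeneous $j_\lambda(y_1,\dots,y_m)$ can be separated. For this I will filter by lowest degree in $\mathbf{x}$ and by the dominance order of leading terms, and let $m\to\infty$ using stability.
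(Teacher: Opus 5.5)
Your strategy is the paper's strategy: push $\omega$ through one Cauchy identity so that its kernel matches the other's, then separate coefficients of the independent $j_\lambda(\mathbf{y})$ at $\mu=\varnothing$. However, the kernel-matching step, which carries the whole argument, is inconsistent as written.

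You verify, with $\omega_q\colon p_n\mapsto(-1)^{n-1}p_n/(1-q^n)$, that $\omega_q$ sends $\prod_{i,j}(1-qx_iy_j)/(1-x_iy_j)$ to $\prod_{i,j}(1+x_iy_j)$. You then apply $\omega_q$ to the \emph{dual} identity and compare $\omega_q(\ff_{\lambda'/\nu'})$ with $J_{\lambda/\nu}$ from the untransformed Hall--Littlewood identity. That comparison requires the reverse statement $\omega_q\bigl(\prod_{i,j}(1+x_iy_j)\bigr)=\prod_{i,j}(1-qx_iy_j)/(1-x_iy_j)$.

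The two statements are not equivalent, because $\omega_{q,0}$ is not an involution. Its inverse is $\omega_{0,q}$, which is exactly why the paper needs $\omega_{q,0}\circ\omega_{0,q}=\mathrm{Id}$. With your normalization, $\omega_q$ sends $\prod_{i,j}(1+x_iy_j)$ to $\prod_{i,j}1/(x_iy_j;q)_\infty$, not to the Hall--Littlewood kernel. In fact, under your normalization the identity you are proving is false: the degree-one part of $\omega_q(\ff_1)$ would be $p_1/(1-q)$, while that of $J_1$ is $(1-q)p_1$.

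The repair is to fix the normalization. The one compatible with \eqref{eq:omega_q} (e.g.\ $\omega_{q,0}(W_1)=Q_1=(1-q)p_1$) is Macdonald's, $\omega_{q,0}\colon p_n\mapsto(-1)^{n-1}(1-q^n)p_n$. The power-sum formula displayed in the preliminaries, which you used, is really that of the inverse $\omega_{0,q}$.

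With the correct $\omega_{q,0}$, the map sends $\prod_{i,j}(1+x_iy_j)$ to the Hall--Littlewood kernel. Your route then works: apply $\omega_{q,0}$ to the $\ff$-side of the dual identity and compare with the Hall--Littlewood identity. This yields $\omega_q(\ff_{\lambda'/\nu'})=J_{\lambda/\nu}$ directly, with no inversion step.

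The paper goes the other way. It applies $\omega_{0,q}$ (the map your computation actually describes) to the $J$-side of the Hall--Littlewood identity, obtains $\omega_{0,q}(J_{\lambda/\mu})=\ff_{\lambda'/\mu'}$, and then inverts with $\omega_{q,0}$.

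Once the direction is fixed, the rest of your plan is sound. This includes the collapse of the right-hand sums at $\mu=\varnothing$ and the coefficient extraction, both of which the paper leaves implicit.
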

\begin{proof}
First, we note that the branching formulas for 
$j_{\lambda/\mu}$, $J_{\lambda/\mu}$, and $\ff_{\lambda/\mu}$ 
imply the stability property
\[
f_{\lambda/\mu}(x_{1},\dots,x_{n},0)
   = f_{\lambda/\mu}(x_{1},\dots,x_{n}),
   \qquad 
   f\in\{\, j,\,J,\,\ff \,\}.
\]
Therefore they extend naturally from symmetric 
polynomials in finitely many variables to symmetric functions 
in countably many variables.

From the Cauchy identity~\ref{eq:HL_cauchy} and the dual Cauchy identity~\ref{eq:dual_cauchy}, we obtain
\[
\omega_{0,t}\!\left(J_{\lambda/\mu}(\mathbf{x};t)\right)
    = \ff_{\lambda'/\mu'}(\mathbf{x};t).
\]
Then the main statement of the proposition follows by replacing $t$ with $q$ in the above equation and using the fact that 
\[
\omega_{q,0}\circ \omega_{0,q} = \mathrm{Id}.
\]
\end{proof}

\section{Ring Theorem}
Unlike many homogeneous families of symmetric polynomials, the degree of the $\ff_{\lambda}$ grows with the number of variables.  
Remarkably, even though their degree increases with the number of variables, the expansion of their product in terms of themselves remains finite. Our proof of this theorem is a novel application of the two Cauchy identities~\ref{eq:cauchy} and ~\ref{eq:dual_cauchy}.

\begin{thm}\label{thm:finite_LR_rule}
The following statements hold:
\begin{enumerate}
    \item[$(a).$] 
    For all partitions $\mu$ and $\nu$, the product 
    \[
        \mathbb{F}_{\nu} \, \mathbb{F}_{\mu}
        = \sum_{\lambda} c^{\lambda}_{\mu,\nu}\, \mathbb{F}_{\lambda}
    \]
    is a finite expansion.  Equivalently, for fixed $\mu$ and $\nu$, there exist only finitely many partitions $\lambda$ such that $c^{\lambda}_{\mu,\nu} \neq 0$.

    \item[$(b).$] The functions $\mathbb{F}_{\lambda}(x_{1}, x_{2}, \dots)$ form a basis of the commutative ring
    \[
        \Gamma^{q} := \bigoplus_{\lambda} \mathbb{R}(q)\,\mathbb{F}_{\lambda}(x_{1},x_{2},\dots).
    \]

    \item[$(c).$] For any partitions $\mu \subseteq \lambda$, the skew inhomogeneous $q$-Whittaker polynomials $\mathbb{F}_{\lambda/\mu}$ belong to the ring $\Gamma^{q}$.
    Moreover, $\Gamma^q$ can be made a commutative and cocommutative bialgebra with the above product and the coproduct $\Delta: \Gamma^q \to \Gamma^q \otimes \Gamma^q$ given by 
$$
\Delta(\ff_{\lambda}) = \sum_{\mu} \ff_{\lambda/\mu} \otimes \ff_{\mu}
$$
for $\mathbb{F}_{\lambda/\mu} \in \Gamma^q$. 
\end{enumerate}
\end{thm}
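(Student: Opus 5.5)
Our plan follows the strategy of \cite{Gcom:Y2019SGrothCauchy}: use the Cauchy identities to identify the coefficients $c^{\lambda}_{\mu,\nu}$ with values of a dual family, and then control those values by a support argument. Two inputs do the work.

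First, identity \eqref{eq:cauchy} with $\mu=\nu=\varnothing$ shows that the interpolation functions $\widetilde{\ff}_\lambda$ form a dual family to $\ff_\lambda$ with respect to the kernel $\prod_{i,j}(x_i;q)_\infty/(x_iy_j;q)_\infty$. Taking $\nu=\varnothing$ and general $\mu$ gives
\[
\sum_\lambda \ff_{\lambda/\mu}(\mathbf x)\widetilde\ff_\lambda(\mathbf y)=\Pi(\mathbf x,\mathbf y)\,\widetilde\ff_{\mu}(\mathbf y).
\]
Consequently, the structure constants of the product $\ff_\mu\ff_\nu$ are the coproduct coefficients of the dual family: $\widetilde\ff_\lambda(\mathbf y,\mathbf z)=\sum c^\lambda_{\mu,\nu}\widetilde\ff_\mu(\mathbf y)\widetilde\ff_\nu(\mathbf z)$, in a suitable sense of adding alphabets for the kernel.

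This is circular, though, because finiteness of $\{\lambda: c^\lambda_{\mu,\nu}\neq 0\}$ for fixed $\mu,\nu$ is the hard direction: a fixed $\widetilde\ff_\lambda$ has only finitely many $(\mu,\nu)$ by vanishing, but that is the wrong direction. So the \textbf{main step} is to bound $\lambda$ for fixed $\mu,\nu$. I would bound the number of columns $\lambda_1$ and the number of rows $\ell(\lambda)$ separately, using the two Cauchy identities.

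\emph{Bounding $\ell(\lambda)$.} We work in $n$ variables. There $\ff_\lambda(x_1,\dots,x_n)=0$ whenever $\ell(\lambda)>n$ by Proposition~\ref{cor:vanishing_of_skew_whit}, and in $n$ variables the $\ff_\lambda$ with $\ell(\lambda)\le n$ are a basis of symmetric polynomials. By stability, the coefficients $c^\lambda_{\mu,\nu}$ in infinitely many variables restrict compatibly. To show $c^\lambda_{\mu,\nu}=0$ for $\ell(\lambda)>\ell(\mu)+\ell(\nu)$, I would use the skew Cauchy identity: pair the product against $\widetilde\ff$ and rewrite $\ff_\mu\ff_\nu$ as a branching $\sum \ff_{\lambda/\nu}\ff_\nu$-type expression, which isolates $c^\lambda_{\mu,\nu}$ as a coefficient of $\ff_{\lambda/\nu}$ against $\ff_\mu$. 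Vanishing of $\ff_{\lambda/\nu}$ in $\ell(\mu)$ variables unless $\ell(\lambda)-\ell(\nu)\le \ell(\mu)$ then gives $\ell(\lambda)\le\ell(\mu)+\ell(\nu)$.

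\emph{Bounding $\lambda_1$.} There is no conjugation symmetry for $\ff$, and this is the principal obstacle. I would replace conjugation by $\omega_q$ via Proposition~\ref{prop:omega_inhomo}: $\omega_q(\ff_\lambda)=J_{\lambda'}$. The dual Cauchy identity \eqref{eq:dual_cauchy} with kernel $\prod(1+x_iy_j)$ pairs $\ff_{\lambda'}$ with $j_\lambda$. Applying $\omega_q$ to the product expansion gives $J_{\mu'}J_{\nu'}=\sum c^\lambda_{\mu,\nu}J_{\lambda'}$. Then the Hall--Littlewood Cauchy identity \eqref{eq:HL_cauchy}, together with the vanishing in Proposition~\ref{cor:vanishing_of_skew_HL} for $j,J$, runs the same row-length argument on $\lambda'$, yielding $\ell(\lambda')\le \ell(\mu')+\ell(\nu')$, that is, $\lambda_1\le\mu_1+\nu_1$. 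The delicate point is justifying that $\omega_q$ extends to the completion and commutes with these infinite expansions. I would handle this by working degree by degree in $\hat\Lambda$, since $\omega_q$ is homogeneous.

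With both bounds, $\lambda$ lies in the $(\ell(\mu)+\ell(\nu))\times(\mu_1+\nu_1)$ box, which proves (a). Part (b) follows from (a) together with the linear independence of the $\ff_\lambda$, whose lowest-degree terms are the $W_\lambda$, and from $\ff_\varnothing=1$.

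For (c), we expand $\ff_{\lambda/\mu}=\sum_\nu d_\nu \ff_\nu$ using the Cauchy identity, where $d_\nu$ is essentially $c^\lambda_{\mu,\nu}$. The same box argument bounds $\nu$, so the sum is finite and $\ff_{\lambda/\mu}\in\Gamma^q$. The coproduct is then well defined by the branching formula \eqref{eq:branching_inhomo} in split alphabets, and its coassociativity and multiplicativity follow from $\ff_\lambda(\mathbf x,\mathbf y)=\sum_\mu\ff_{\lambda/\mu}(\mathbf x)\ff_\mu(\mathbf y)$. Cocommutativity follows from the symmetry of $\ff_\lambda$ under exchanging $\mathbf x$ and $\mathbf y$.
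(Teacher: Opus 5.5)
\textbf{There is a genuine gap.} Your overall plan matches the paper: bound $\ell(\lambda)$ via \eqref{eq:cauchy} plus vanishing, and bound $\lambda_1$ via a Hall--Littlewood-type identity plus vanishing, giving $\ell(\lambda)\le\ell(\mu)+\ell(\nu)$ and $\lambda_1\le\mu_1+\nu_1$. The problem is the step that carries the row bound: it is false.

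\emph{The row bound.} The coefficient of $\ff_\mu$ in $\ff_{\lambda/\nu}$ is \emph{not} $c^{\lambda}_{\mu,\nu}$. Because $\ff$ is not self-dual, the coproduct coefficients of $\ff$ are the structure constants of the interpolation family $\widetilde{\ff}$; that is what your own displayed identity with $\nu=\varnothing$ says. The structure constants of $\ff$ are instead the coproduct coefficients of $\widetilde{\ff}$, as you correctly noted in your first paragraph. A concrete check: $c^{(2,1)}_{\square,\square}=-(1-q)\neq 0$ by Proposition~\ref{prop:Inwhit_Pieri}. Yet $\ff_{(2,1)/(1)}=W_{(2,1)/(1)}+\cdots$ starts in degree $2$, so its $\ff_\square$-coefficient is $0$.

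The correct relation, which the paper obtains from \eqref{eq:cauchy} at $\mu=\varnothing$ after expanding the kernel via the case $\mu=\nu=\varnothing$, is
\[ \widetilde{\ff}_{\lambda/\nu}(y_1,\dots,y_m)=\sum_{\rho}c^{\lambda}_{\rho,\nu}\,\widetilde{\ff}_{\rho}(y_1,\dots,y_m). \]
Take $m=\ell(\mu)$. The $\widetilde{\ff}_\rho$ with $\ell(\rho)\le m$ are linearly independent, since their top-degree parts are $W_\rho$. So $c^{\lambda}_{\mu,\nu}\neq 0$ forces $\widetilde{\ff}_{\lambda/\nu}(y_1,\dots,y_{\ell(\mu)})\neq 0$, and Proposition~\ref{cor:vanishing_of_skew_whit} then gives $\ell(\lambda)\le\ell(\nu)+\ell(\mu)$.

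So the dual-family route you dismissed as ``circular'' and ``the wrong direction'' is exactly the one that works. What turns the direction around is fixing $\nu$ in the skew function $\widetilde{\ff}_{\lambda/\nu}$ and restricting to $\ell(\mu)$ variables.

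\emph{The column bound.} Your route via $\omega_q$ and \eqref{eq:HL_cauchy} is a legitimate variant of the paper's direct use of \eqref{eq:dual_cauchy}. It works provided you read $c^{\lambda}_{\mu,\nu}$ off $j_{\lambda'/\nu'}$ expanded in the $j_{\mu'}$, not off $J_{\lambda'/\nu'}$ expanded in the $J_{\mu'}$, which would be the same pitfall. You then land on the paper's relation $j_{\lambda'/\nu'}=\sum_{\mu}\frac{b_{\lambda'}}{b_{\nu'}b_{\mu'}}c^{\lambda}_{\mu,\nu}\,j_{\mu'}$. The extra cost is extending $\omega_q$ to $\hat\Lambda$ and invoking uniqueness of $J$-expansions.

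\emph{Part (c)} fails on two counts.
\begin{enumerate}
\item The coefficient $d_\nu$ in $\ff_{\lambda/\mu}=\sum_\nu d_\nu\ff_\nu$ is not $c^{\lambda}_{\mu,\nu}$; the same example shows this.
\item Even if it were, the box from (a) bounds $\lambda$ for fixed $\mu,\nu$. For fixed $\lambda,\mu$ it only gives the lower bounds $\ell(\nu)\ge\ell(\lambda)-\ell(\mu)$ and $\nu_1\ge\lambda_1-\mu_1$, which say nothing about finiteness.
\end{enumerate}
What you need is an \emph{upper} bound on $\nu$, and it comes from non-skew vanishing.
\begin{itemize}
\item By your displayed identity, $d_\nu$ is the coefficient of $\widetilde{\ff}_\lambda$ in the finite expansion of the polynomial product $\widetilde{\ff}_\mu\widetilde{\ff}_\nu$. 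In $\ell(\nu)-1$ variables $\widetilde{\ff}_\nu$ vanishes, so every coefficient of $\widetilde{\ff}_\lambda$ with $\ell(\lambda)<\ell(\nu)$ must be zero. Hence $d_\nu\neq 0$ implies $\ell(\nu)\le\ell(\lambda)$.
\item Likewise, \eqref{eq:dual_cauchy} gives $d_\nu=\frac{b_{\lambda'}}{b_{\mu'}b_{\nu'}}d^{\lambda'}_{\mu',\nu'}$, where $j_{\mu'}j_{\nu'}=\sum_{\lambda'}d^{\lambda'}_{\mu',\nu'}j_{\lambda'}$. Vanishing of $j_{\nu'}$ in fewer than $\nu_1$ variables then yields $\nu_1\le\lambda_1$.
\end{itemize}
Together these place $\nu$ inside the $\ell(\lambda)\times\lambda_1$ rectangle. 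Part (b) and the bialgebra structure are fine as you have them.
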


\begin{proof}
\medskip
\noindent\textit{Proof of \emph{$(a)$}.} Our goal is to show that the expansion
\[
    \mathbb{F}_{\nu}\,\mathbb{F}_{\mu}
    = \sum_{\lambda} c^{\lambda}_{\mu,\nu}\,\mathbb{F}_{\lambda}
\]
is finite.  
Equivalently, we must prove that only finitely many partitions 
$\lambda = (\lambda_{1},\dots,\lambda_{\ell(\lambda)})$ contribute nonzero terms.  
To establish this, we will show that both the length $\ell(\lambda)$ and the first part $\lambda_{1}$ are bounded, which in turn implies that only finitely many such partitions $\lambda$ can occur.

We begin by proving that $\ell(\lambda)$ is bounded. To prove this, we consider the $\mu=\varnothing$ case of the skew-Cauchy identity~\ref{eq:cauchy}:
\begin{equation*}
\sum_{\lambda}
  \ff_{\lambda}(x_{1},\dots,x_{n})\,
  \widetilde{\ff}_{\lambda/\nu}(y_{1},\dots,y_{m}) \,=\, \prod_{j=1}^{m}\prod_{i=1}^{n}
    \frac{(x_{i};q)_{\infty}}{(x_{i}y_{j};q)_{\infty}}
    \ff_{\nu}(x_{1},\dots,x_{n}).
\end{equation*}

We then expand the Cauchy kernel by taking the $\mu=\nu=\varnothing$ case of the skew-Cauchy identity~\ref{eq:cauchy}:
\begin{align*}
\sum_{\lambda}
  \ff_{\lambda}(x_{1},\dots,x_{n})\,
  \widetilde{\ff}_{\lambda/\nu}(y_{1},\dots,y_{m}) \,&=\,\left(\sum_{\mu}
 \,\ff_{\mu}(x_{1},\dots,x_{n})\,
  \widetilde{\ff}_{\mu}(y_{1},\dots,y_{m})\right)\, \ff_{\nu}(x_{1},\dots,x_{n})\\
  &=\,\sum_{\mu}\left(\sum_{\lambda}\,c^{\lambda}_{\mu,\nu}\,\ff_{\lambda}(x_{1},\dots,x_{n})\right)\widetilde{\ff}_{\mu}(y_{1},\dots,y_{m})\\
  &=\, \sum_{\lambda}\left(\sum_{\mu}\,c^{\lambda}_{\mu,\nu}\,\widetilde{\ff}_{\mu}(y_{1},\dots,y_{m})\right)\ff_{\lambda}(x_{1},\dots,x_{n}).
\end{align*}

By equating the coefficients of $\ff_{\lambda}(x_{1},\dots,x_{n})$ on both sides, we obtain:
\begin{equation*}
  \widetilde{\ff}_{\lambda/\nu}(y_{1},\dots,y_{m})
    = \sum_{\mu} c^{\lambda}_{\mu,\nu}\,
      \widetilde{\ff}_{\mu}(y_{1},\dots,y_{m}).
\end{equation*}

We recall that the partitions $\mu$ and $\nu$ are fixed. Let $\lambda$ be a partition such that 
$c^{\lambda}_{\mu,\nu} \neq 0$. Let $k$ be the minimal integer for which 
$\widetilde{\ff}_{\mu}(y_{1},\dots,y_{k}) \neq 0$. This implies that $\widetilde{\ff}_{\lambda/\nu}\neq 0$. Then, by Corollary~\ref{cor:vanishing_of_skew_whit}, we conclude that  
\[
\ell(\lambda) \leq \ell(\nu) + k.
\]
\smallskip
We now prove that $\lambda_{1}=\ell(\lambda')$ is bounded. To bound $\lambda_{1}$, we repeat the same argument but with the dual Cauchy identity~\ref{prop:dual_cauchy} in the case $\mu=\varnothing$:
\begin{align*}
\sum_{\lambda}\,\dfrac{b_{\nu'}(t)}{b_{\lambda'}(t)}\,
  \ff_{\lambda}(x_{1},\dots,x_{n};t)\,
  &j_{\lambda'/\nu'}(y_{1},\dots,y_{m};t) \\
\quad &=\quad \prod_{j=1}^{m}\prod_{i=1}^{n}
    \bigl(1 + x_{i}y_{j}\bigr)
   \,
    \ff_{\nu}(x_{1},\dots,x_{n})\\[0.5em]
   &=\,\sum_{\mu}\,\dfrac{1}{b_{\mu'}(t)}
  \ff_{\mu}(x_{1},\dots,x_{n})\,
  j_{\mu'}(y_{1},\dots,y_{m})\,
   \,
    \ff_{\nu}(x_{1},\dots,x_{n})\\[0.5em]
   &=\,\sum_{\lambda}\left(\sum_{\mu}\,\dfrac{b_{\lambda'}(t)}{b_{\nu'}(t)\,b_{\mu'}(t)}\,c^{\lambda}_{\mu,\nu}
 \, j_{\mu'}(y_{1},\dots,y_{m})\right)\,
  \mathbb{F}_{\lambda}(x_{1},\dots,x_{m})\,
\end{align*}

We can then conclude that 
\[
j_{\lambda'/\nu'}(y_{1},\dots,y_{m};t)\,=\,\sum_{\mu}\,\dfrac{b_{\lambda'}(t)}{b_{\nu'}(t)\,b_{\mu'}(t)}\,\,c^{\lambda}_{\mu,\nu}\,\,
 \, j_{\mu'}(y_{1},\dots,y_{m};t).
\]dami

For fixed partitions $\mu$ and $\nu$, let $\lambda$ be a partition such that 
$c^{\lambda}_{\mu,\nu} \neq 0$.  
Let $k'$ be the minimal integer for which 
$j_{\mu'}(y_{1},\dots,y_{k'}) \neq 0$. Then it follows that $j_{\lambda'/\nu'}(y_{1},\dots,y_{k'}) \ne 0$. Then, by Proposition~\ref{cor:vanishing_of_skew_HL}, we conclude that  
\[
\lambda_{1}\,=\,\ell(\lambda') \leq \ell(\nu') + k'.
\]


\medskip

\noindent\textit{Proof $(b)$.}
Firstly, we observe that the lowest degree component of $\ff_{\lambda}$ is equal to $q$-Whittaker polynomial:
\[
\ff_{\lambda}\,=\,W_{\lambda}\,+\,\text{ higher order terms.}
\]
This can be seen by comparing the branching formula of $\ff_{\lambda}$ in
Equation~\ref{eq:branching_inhomo} with the corresponding branching formula of
$W_{\lambda}$ in Equation~\ref{eq:branching_whit}.  
In particular, the leading homogeneous components of $\ff_{\lambda}$ coincide with those of
$W_{\lambda}$, which implies that the family $\{\ff_{\lambda}\}$ is linearly independent.  
Moreover, by part~(a) of Theorem~\ref{thm:finite_LR_rule}, the product of any two
$\ff_{\lambda}$ lies in the ring $\Gamma^{q}$.


\medskip

\noindent\textit{Proof of $(c)$.}
We now show that each skew polynomial $\mathbb{F}_{\lambda/\mu}$ can be written as a finite 
$\mathbb{R}(q)$-linear combination of the basis elements $\{\mathbb{F}_{\nu}\}$ and therefore lies in the ring $\Gamma^{q}$.

\smallskip
From the dual Cauchy identity~\ref{eq:dual_cauchy}, we obtain the following correspondence between the coefficients 
\(d^{\lambda}_{\mu,\nu}\) appearing in the expansions:
\begin{equation*}
\ff_{\lambda/\mu}
   = \sum_{\nu}
     \frac{b_{\lambda'}(t)}{b_{\mu'}(t)\,b_{\nu'}(t)}\,
     d^{\lambda'}_{\mu',\nu'}\,\ff_{\nu},
\end{equation*}
and
\begin{equation*}
j_{\mu'}\,j_{\nu'}
   = \sum_{\lambda'} d^{\lambda'}_{\mu',\nu'}\, j_{\lambda'}.
\end{equation*}

Using the same finiteness argument as in the proof of part~(a) of Theorem~\ref{thm:finite_LR_rule}—now applied to the Cauchy identities~\eqref{eq:HL_cauchy} and~\eqref{eq:dual_cauchy}—we see that for any fixed partition $\lambda$, only finitely many pairs $(\mu,\nu)$ satisfy $d^{\lambda}_{\mu,\nu}\neq 0$.  
Hence each $\ff_{\lambda/\mu}$ is a finite linear combination of the $\ff_{\nu}$ and therefore belongs to the ring $\Gamma^{q}$.

The bialgebra property is then a standard consequence of the branching rule for inhomogeneous $q$-Whittaker functions $\ff_{\lambda}$: 
$$
\ff_{\lambda}(\mathbf{x}, \mathbf{y}) = \sum_{\mu} \ff_{\lambda/\mu}(\mathbf{x})\, \ff_{\mu}(\mathbf{y}), \qquad \Delta: \Gamma^q \to \Gamma^q \otimes \Gamma^q, \qquad \Delta : \ff_{\lambda} \mapsto \sum_{\mu} \ff_{\lambda/\mu} \otimes \ff_{\mu}, 
$$
and counit given by the constant term linear map: $\ff_{\varnothing} \mapsto 1$, and $\ff_{\lambda} \mapsto 0$ for $\lambda \ne \varnothing$. 
\end{proof}
\

We give an example illustrating the finiteness of the product of $\ff_{\lambda}$.

\begin{ex}
We illustrate the product of two inhomogeneous $q$-Whittaker polynomials corresponding to the partition $\square=(1)$.  
In two variables, a direct computation gives
\[
\ff_{\square}(x_{1},x_{2})\,\ff_{\square}(x_{1},x_{2})
   \;=\;
   \ff_{(2,0)}(x_{1},x_{2})
   \;+\; (1-q)\,\ff_{(1,1)}(x_{1},x_{2})
   \;-\; (1-q)\,\ff_{(2,1)}(x_{1},x_{2}),
\]
where
\[
\ff_{(2,1)}(x_{1},x_{2})
   = x_{1}^{2}x_{2}
     + x_{1}x_{2}^{2}
     - x_{1}^{2}x_{2}^{2},
\]
and
\[
\mathbb{F}_{(2,0)}(x_{1},x_{2})
= x_{1}^{2}
   + (1+q)x_{1}x_{2}
   + x_{2}^{2}
   - (1+q)(x_{1}^{2}x_{2}+x_{1}x_{2}^{2})
   + q\,x_{1}^{2}x_{2}^{2}.
\]

\medskip
By induction on the number of variables, together with the branching formula, this identity extends to $n$ variables:
\begin{multline*}
\ff_{\square}(x_{1},\dots,x_{n})\,\ff_{\square}(x_{1},\dots,x_{n}) \\
   =\;
   \ff_{(2,0)}(x_{1},\dots,x_{n})
   \;+\; (1-q)\,\ff_{(1,1)}(x_{1},\dots,x_{n})
   \;-\; (1-q)\,\ff_{(2,1)}(x_{1},\dots,x_{n}).
\end{multline*}
\end{ex}

\

We conclude this section with a proposition governing the product
$\ff_{\square}\,\times\,\ff_{\lambda}$:

\begin{prop}\label{prop:Inwhit_Pieri}
Inhomogeneous $q$-Whittaker polynomials satisfy the following product rule:
\begin{equation*}
\ff_{\square}\,\ff_{\nu} \\
   =\; \sum_{\lambda}\,c^{\lambda}_{\square,\nu}\,\ff_{\lambda},
\end{equation*}
where the summation is over partitions $\lambda$ such that $\lambda/\nu$ has at-most one box in every column and every row, and we have
\begin{equation*}
c^{\lambda}_{\square,\nu}:=(-1)^{|\lambda|-|\nu|-1}\,\eta_{\lambda/\nu}\,(1-q)^{|\lambda|-|\nu|}
\end{equation*}
where 
\[
\eta_{\lambda/\nu}\,=\,\prod_{j\geq 1}\,\dfrac{(q;q)_{\nu_{j}-\nu_{j+1}}}{(q;q)_{\lambda_{j}-\nu_{j}}\,(q;q)_{\nu_{j}-\lambda_{j+1}}}.
\]
\end{prop}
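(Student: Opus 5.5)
The plan is to reuse the identity obtained in the proof of part (a) of Theorem~\ref{thm:finite_LR_rule}, namely
\[
\widetilde{\ff}_{\lambda/\nu}(y_{1},\dots,y_{m}) \,=\, \sum_{\mu} c^{\lambda}_{\mu,\nu}\,\widetilde{\ff}_{\mu}(y_{1},\dots,y_{m}),
\]
which holds for every $m$. I will specialize it to a single variable, $m=1$. By Proposition~\ref{cor:vanishing_of_skew_whit}, $\widetilde{\ff}_{\mu}(y)$ vanishes unless $\ell(\mu)\le 1$, so only one-row partitions $\mu=(k)$ survive. This gives
\[
P(y):=\widetilde{\ff}_{\lambda/\nu}(y)=\sum_{k\ge 0} c^{\lambda}_{(k),\nu}\,\widetilde{\ff}_{(k)}(y).
\]
From Definition~\ref{def:interpolation}, we have $\widetilde{\ff}_{(k)}(y)=(y-1)(y-q)\cdots(y-q^{k-1})/(q;q)_k$. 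These polynomials have distinct degrees, so the coefficients $c^{\lambda}_{(k),\nu}$ are uniquely determined. Since the family $\{\widetilde{\ff}_{(k)}\}$ is a Newton-type interpolation basis with nodes $1,q,q^2,\dots$, the coefficient of $\square=(1)$ is read off from the two values $P(1)$ and $P(q)$. Indeed, $\widetilde{\ff}_{(k)}(1)=0$ for $k\ge1$, and $\widetilde{\ff}_{(k)}(q)=0$ for $k\ge 2$, while $\widetilde{\ff}_{(1)}(q)=(q-1)/(1-q)=-1$. Hence
\[
c^{\lambda}_{\varnothing,\nu}=P(1),\qquad c^{\lambda}_{\square,\nu}=P(1)-P(q).
\]

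Next I evaluate $P$ explicitly from the one-variable formula,
\[
P(y)=\mathbf{1}_{\nu\prec\lambda}\,y^{|\lambda/\nu|}\prod_{r}(y^{-1};q)_{\lambda_r-\nu_r}\,\frac{(q;q)_{\nu_r-\nu_{r+1}}}{(q;q)_{\lambda_r-\nu_r}(q;q)_{\nu_r-\lambda_{r+1}}}.
\]
At $y=1$ every factor $(1;q)_a$ with $a\ge1$ vanishes, so $P(1)=\delta_{\lambda\nu}$. At $y=q$ the factor $(q^{-1};q)_a$ vanishes for $a\ge 2$, because its second factor is $1-q^{-1}q=0$. So $P(q)\neq0$ forces $\lambda_r-\nu_r\le1$ for all $r$. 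Combined with $\nu\prec\lambda$, this is exactly the condition that $\lambda/\nu$ has at most one box in each row and each column. In that case, with $d=|\lambda|-|\nu|$, each nonempty row contributes $q(1-q^{-1})=q-1$, and the $q$-binomial factors give exactly $\eta_{\lambda/\nu}$. Thus $P(q)=(q-1)^d\eta_{\lambda/\nu}$.

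Substituting, for $\lambda\neq\nu$ we get
\[
c^{\lambda}_{\square,\nu}=-(q-1)^d\eta_{\lambda/\nu}=(-1)^{d-1}(1-q)^d\eta_{\lambda/\nu},
\]
and $c^{\lambda}_{\square,\nu}=0$ for all other $\lambda\neq\nu$. The one point needing care is $\lambda=\nu$. There $P(1)-P(q)=1-1=0$, so the sum genuinely runs over $\lambda\supsetneq\nu$; the formula as stated would give $-1$, so I would record that $\lambda=\nu$ is excluded. Beyond this, the main step to double-check is the evaluation of the $q$-Pochhammer factors at $y=q$, together with matching the product of $q$-binomial ratios to $\eta_{\lambda/\nu}$ (including the last row, where $\lambda_{\ell+1}=0$). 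The derivation of the expansion identity itself is inherited verbatim from the proof of Theorem~\ref{thm:finite_LR_rule}(a).
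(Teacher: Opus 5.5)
Your proposal is correct and follows essentially the paper's own argument: it specializes the identity $\widetilde{\ff}_{\lambda/\nu}(y)=\sum_k c^{\lambda}_{(k),\nu}\widetilde{\ff}_{(k)}(y)$ from the proof of Theorem~\ref{thm:finite_LR_rule}(a) to one variable and evaluates at $y=1$ and $y=q$. You are in fact more careful than the paper, which drops the $k=0$ term at $y=q$ and so would give $c^{\nu}_{\square,\nu}=-1$; your identity $c^{\lambda}_{\square,\nu}=P(1)-P(q)$ correctly gives $0$ at $\lambda=\nu$, so that term must be excluded from the stated sum, as you noted and as the paper's examples confirm.
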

\begin{proof}
We begin with the case $\mu=\varnothing$ and $m=1$ of the skew Cauchy identity~\ref{eq:cauchy}:
\begin{equation*}
\sum_{\lambda}
  \ff_{\lambda}(x_{1},\dots,x_{n})\,
  \widetilde{\ff}_{\lambda/\nu}(y)
  \;=\;
  \prod_{i=1}^{n}
    \frac{(x_{i};q)_{\infty}}{(x_{i}y;q)_{\infty}}
  \,\ff_{\nu}(x_{1},\dots,x_{n}).
\end{equation*}

Next, we expand the Cauchy kernel using the specialization $\mu=\nu=\varnothing$ of~\ref{eq:cauchy}:
\begin{align*}
\sum_{\lambda}
  \ff_{\lambda}(x_{1},\dots,x_{n})\,
  \widetilde{\ff}_{\lambda/\nu}(y)
  &=
  \left(
     \sum_{k\ge 0}
       \ff_{k}(x_{1},\dots,x_{n})\,\widetilde{\ff}_{k}(y)
  \right)
  \ff_{\nu}(x_{1},\dots,x_{n}) \\[0.3em]
 &=
  \sum_{k\ge 0}
   \left(
      \sum_{\lambda}
        c^{\lambda}_{k,\nu}\,\ff_{\lambda}(x_{1},\dots,x_{n})
   \right)
   \widetilde{\ff}_{k}(y) \\[0.3em]
 &=
  \sum_{\lambda}
    \left(
      \sum_{k\ge 0}
         c^{\lambda}_{k,\nu}\,\widetilde{\ff}_{k}(y)
    \right)
    \ff_{\lambda}(x_{1},\dots,x_{n}).
\end{align*}

Since $\{\ff_{\lambda}\}$ form a basis, equating the coefficients of 
$\ff_{\lambda}(x_{1},\dots,x_{n})$ on both sides yields:
\begin{equation}\label{eq:Pieri_skew_onevariable}
  \widetilde{\ff}_{\lambda/\nu}(y)
   \;=\;
   \sum_{k\ge 0}
     c^{\lambda}_{k,\nu}\,
     \widetilde{\ff}_{k}(y).
\end{equation}

From Definition~\ref{def:interpolation} we have
\begin{equation}\label{eq:onevariable_formula}
\widetilde{\ff}_{\lambda/\nu}(y)
  = \mathbf{1}_{\nu\prec\lambda}\,\eta_{\lambda/\nu}
    \prod_{r=1}^{\ell(\lambda)}
    \prod_{i=1}^{\lambda_{r}-\nu_{r}} (y-q^{i-1})
 \qquad \text{   and   }\qquad
\widetilde{\ff}_{k}(y)
  = \,\dfrac{1}{(q;q)_{k}}\,\prod_{i=1}^{k}(y-q^{i-1}).
\end{equation}

Setting \(y=1\) in \eqref{eq:Pieri_skew_onevariable}, and using
\(\widetilde{\ff}_{k}(1)=0\) for all \(k>0\), we obtain
\[
c^{\lambda}_{\varnothing,\nu} = \mathbf{1}_{\lambda=\nu}.
\]

\medskip
We now evaluate at \(y=q\).  
Since \(\widetilde{\ff}_{k}(q)=0\) for all \(k>1\), equation~\eqref{eq:Pieri_skew_onevariable}
gives
\begin{equation*}
  \widetilde{\ff}_{\lambda/\nu}(q)
   = c^{\lambda}_{\square,\nu}\,\widetilde{\ff}_{\square}(q).
\end{equation*}
Using
\[
\widetilde{\ff}_{1}(y)
   = \frac{y-1}{1-q}
   \qquad\text{ and }\qquad
\widetilde{\ff}_{\square}(q) = -1,
\]
we obtain
\begin{equation*}
c^{\lambda}_{\square,\nu}
   = -\,\widetilde{\ff}_{\lambda/\nu}(q).
\end{equation*}

From the explicit formula in \eqref{eq:onevariable_formula},  
\(\widetilde{\ff}_{\lambda/\nu}(q)\) vanishes whenever
\(\lambda_{r}-\nu_{r} > 1\) for any \(r\).  
Thus in the skew diagram \(\lambda/\nu\), each row contains at most one box.
Moreover, the condition \(\nu\prec\lambda\) implies that each column
also contains at most one box. 

\medskip
Finally for a skew shape $\lambda/\nu$ such that it contains at-most one box in any given row or column, we compute
\[
\widetilde{\ff}_{\lambda/\nu}(q)\,=\,\,\eta_{\lambda/\nu}\,\prod_{i:\lambda_{i}-\mu_{i}=1}\,(q-1)\,=\,\eta_{\lambda/\nu}\,(-1)^{|\lambda|-|\nu|}(1-q)^{|\lambda|-|\nu|}.
\]
This completes our proof.
\end{proof}

\begin{ex}
We illustrate the above proposition with the following example:
\[
\begin{aligned}
\ff_{\square}\,\ff_{(3,1)}
   \;=\;&\;
      \ff_{(4,1)}
      \;+\;(1 - q^{2})\,\ff_{(3,2)}
      \;+\;(1 - q)\,\ff_{(3,1,1)} \\[0.4em]
   &\;
      -\,(1 - q^{2})\,\ff_{(4,2)}
      \;-\,(1 - q)(1 - q^{2})\,\ff_{(3,2,1)}
      \;-\,(1 - q)\,\ff_{(4,1,1)} \\[0.4em]
   &\;
      +\,(1 - q)(1 - q^{2})\,\ff_{(4,2,1)}.
\end{aligned}
\]

\end{ex}

\section{Positive specializations}
We assume $|q| < 1$ for real $q$ throughout this section. 
    A {\it specialization} of a ring is a homomorphism to $\mathbb{R}$. We say that a ring homomorphism $\varphi : \Gamma^q \to \mathbb{R}$ is {\it $\ff$-positive specialization} if $\varphi(\ff_{\lambda/\mu}) \ge 0$ for all partitions $\lambda$ and $\mu$. We use both notations $\varphi(\ff_{\lambda}) = \ff_{\lambda}(\varphi)$ for specializations. 

\begin{defn}[Union of specializations]
    Let $\varphi_1, \varphi_2 : \Gamma^{q} \to \mathbb{R}$ be specializations of $\Gamma^q$. The {\it union} of $\varphi_1, \varphi_2$ is the specialization $\varphi$ denoted by $\varphi = (\varphi_1, \varphi_2) = \varphi_1 \cup \varphi_2$ 
    defined via the bialgebra coproduct $\Delta: \Gamma^q \to \Gamma^q \otimes \Gamma^q$ as 
    $$
    \varphi = (\varphi_1, \varphi_2) = m \circ (\varphi_1 \otimes \varphi_2) \circ \Delta,  
    $$
    where $m : \mathbb{R} \otimes \mathbb{R} \to \mathbb{R}$ is the multiplication map  $m: a \otimes b \mapsto a\cdot b$. 
\end{defn}

\begin{lem}
Let $\varphi_{1}, \varphi_{2} : \Gamma^q \to \mathbb{R}$ be $\ff$-positive specializations of $\Gamma^{q}$. Then their union $\varphi=(\varphi_{1},\varphi_{2})$ is also $\ff$-positive specialization.
\end{lem}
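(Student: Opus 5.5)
Since $\varphi = m\circ(\varphi_1\otimes\varphi_2)\circ\Delta$ is a composition of algebra homomorphisms, it is a ring homomorphism $\Gamma^q\to\mathbb{R}$. Here we use that $\Delta$ is an algebra map by Theorem~\ref{thm:finite_LR_rule}(c), and $\varphi_1\otimes\varphi_2$ and $m$ are multiplicative. The whole task is therefore to show $\varphi(\ff_{\lambda/\mu})\ge 0$ for all $\lambda,\mu$, and for this I will express $\varphi(\ff_{\lambda/\mu})$ as a finite sum of products of nonnegative numbers.

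The key identity is the skew branching rule
\[
\Delta(\ff_{\lambda/\mu}) = \sum_{\nu} \ff_{\lambda/\nu}\otimes \ff_{\nu/\mu}.
\]
In variables this says $\ff_{\lambda/\mu}(\mathbf{x},\mathbf{y}) = \sum_\nu \ff_{\lambda/\nu}(\mathbf{x})\,\ff_{\nu/\mu}(\mathbf{y})$. It follows from the recursive definition~\eqref{eq:branching_inhomo}: iterating the one-variable branching shows that $\ff_{\lambda/\mu}(x_1,\dots,x_{n+m})$ is a sum over chains $\mu\subseteq\cdots\subseteq\lambda$ of products of one-variable factors. Splitting each chain at the intermediate partition $\nu$ reached after the first block of variables gives the identity in finitely many variables, and it passes to countably many variables by stability.

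To identify this with the coproduct of $\Gamma^q$, I recall that $\Delta$ on $\Gamma^q\subset\hat\Lambda$ is realized by $f(\mathbf{x})\mapsto f(\mathbf{x},\mathbf{y})$, which is consistent with the definition $\Delta(\ff_\lambda)=\sum_\mu\ff_{\lambda/\mu}\otimes\ff_\mu$. By Theorem~\ref{thm:finite_LR_rule}(c), each $\ff_{\lambda/\mu}$ is a finite combination $\sum_\kappa a_\kappa \ff_\kappa$. Applying $f\mapsto f(\mathbf{x},\mathbf{y})$ to both sides and using linearity shows that $\Delta(\ff_{\lambda/\mu})$ equals the displayed sum.

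The sum is finite, because the vanishing in Proposition~\ref{cor:vanishing_of_skew_whit} forces $\mu\subseteq\nu\subseteq\lambda$. Hence
\[
\varphi(\ff_{\lambda/\mu})=\sum_{\mu\subseteq\nu\subseteq\lambda}\varphi_1(\ff_{\lambda/\nu})\,\varphi_2(\ff_{\nu/\mu}),
\]
and every term on the right is nonnegative by the $\ff$-positivity of $\varphi_1$ and $\varphi_2$. Therefore $\varphi(\ff_{\lambda/\mu})\ge 0$.

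The main obstacle is justifying the skew coproduct formula inside $\Gamma^q$ itself rather than only as an identity of power series. The concern is that the tensor $\sum_\nu\ff_{\lambda/\nu}\otimes\ff_{\nu/\mu}$ must coincide with the algebraic $\Delta$ applied to the finite expansion of $\ff_{\lambda/\mu}$. I would handle this through the injectivity of $\Gamma^q\otimes\Gamma^q\to\hat\Lambda(\mathbf{x})\hat\otimes\hat\Lambda(\mathbf{y})$. This injectivity follows from the linear independence of $\{\ff_\lambda\}$, which the proof of Theorem~\ref{thm:finite_LR_rule}(b) obtains by looking at lowest-degree components $W_\lambda$. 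Because the map is injective, both sides being equal as functions of $(\mathbf{x},\mathbf{y})$ forces them to be equal in $\Gamma^q\otimes\Gamma^q$.
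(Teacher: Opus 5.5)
Your proof is correct and follows the same route as the paper. Both derive $\Delta(\ff_{\lambda/\mu})=\sum_{\nu}\ff_{\lambda/\nu}\otimes\ff_{\nu/\mu}$ from the branching rule and conclude $\varphi(\ff_{\lambda/\mu})=\sum_{\nu}\varphi_1(\ff_{\lambda/\nu})\,\varphi_2(\ff_{\nu/\mu})\ge 0$ termwise; your extra step (identifying this tensor with the algebraic coproduct via injectivity of $\Gamma^q\otimes\Gamma^q$ into functions of $(\mathbf{x},\mathbf{y})$, using the lowest-degree components $W_\alpha(\mathbf{x})W_\beta(\mathbf{y})$) fills in a point the paper simply asserts.
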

\begin{proof}
Note that from the branching formula we have $\Delta(\ff_{\lambda/\mu}) = \sum_{\nu} \ff_{\lambda/\nu} \otimes \ff_{\nu/\mu}$ which implies  
    $$
    \varphi(\ff_{\lambda/\mu}) = \ff_{\lambda/\mu}(\varphi_1, \varphi_2) = \sum_{\nu} \ff_{\lambda/\nu}(\varphi_1)\, \ff_{\nu/\mu}(\varphi_2) \ge 0
    $$
and so $\varphi$ is $\ff$-positive. 
\end{proof}

\begin{defn}[Extensions from $\Lambda$ to $\Gamma^q$]
Let $\phi : \Lambda \to \mathbb{R}$ be a specialization of $\Lambda$. 
First, we construct its extension to the ring homomorphism $\phi_z : \Gamma^q \to \mathbb{R}[[z]]$ as follows: 
For an element $F \in \Gamma^q$ which can be expanded as the sum of homogeneous components 
$$
F = \sum_{n} F_{n}, 
\text{ where } F_{n} \in \Lambda \text{ with} \deg(F_n) = n, 
$$
we define the series 
$$
\phi_z : F \longmapsto \sum_{n} \phi(F_n) z^n \in \mathbb{R}[[z]].
$$
We then define the specialization $\hat\phi = S(\phi_z): \Gamma^q \to \mathbb{R}$ as value at $z = 1$ of the unique analytic continuation of convergent series $\phi_z$ to the positive real line. 
In such case, we say that $\phi$ {\it extends} to $\hat\phi$. 
In other words, we consider the specialization $\hat\phi$ of $\Gamma^q$ as an image under the specialization $\phi$ of $\Lambda$. 
\end{defn}

\begin{lem}
Let $\phi_1, \phi_2 : \Lambda \to \mathbb{R}$ be specializations that extend to specializations $\hat\phi_1, \hat\phi_2 : \Gamma^q \to \mathbb{R}$. Then their union $\phi = (\phi_1, \phi_2)$ extends to the union $\hat\phi = (\hat\phi_1, \hat\phi_2)$. 
\end{lem}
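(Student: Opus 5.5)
The plan is to compare the two sides degree by degree at the level of the formal series $\phi_z$, and only afterwards pass to the analytic continuation at $z=1$. Fix $\lambda$ and write $\ff_\lambda=\sum_n F_n$ with $F_n\in\Lambda$ homogeneous of degree $n$. By definition of the union on $\Lambda$ (additivity on power sums), $\phi=(\phi_1,\phi_2)$ is $m\circ(\phi_1\otimes\phi_2)\circ\Delta_\Lambda$, where $\Delta_\Lambda$ is the standard coproduct of $\Lambda$ given by $f\mapsto f(\mathbf{x},\mathbf{y})$. The first step is to observe that the coproduct of $\Gamma^q$ from Theorem~\ref{thm:finite_LR_rule}(c) is the restriction of the completed coproduct of $\hat\Lambda$. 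Indeed, the branching rule $\ff_\lambda(\mathbf{x},\mathbf{y})=\sum_\mu \ff_{\lambda/\mu}(\mathbf{x})\ff_\mu(\mathbf{y})$ holds in the completion, and the sum over $\mu\subseteq\lambda$ is finite. Since $\Delta_\Lambda$ preserves degree, this gives for every $n$
\[
\Delta_\Lambda(F_n)=\sum_{\mu}\sum_{a+b=n}(\ff_{\lambda/\mu})_a\otimes(\ff_\mu)_b ,
\]
where the subscripts denote homogeneous components.

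The second step is to apply $\phi_1\otimes\phi_2$, multiply, and sum against $z^n$. This yields the identity of formal power series
\[
\phi_z(\ff_\lambda)=\sum_{\mu\subseteq\lambda}(\phi_1)_z(\ff_{\lambda/\mu})\,(\phi_2)_z(\ff_\mu)\quad\text{in }\mathbb{R}[[z]],
\]
because the degree-$n$ coefficient on the left is $\phi(F_n)$ and on the right it is exactly the Cauchy product coefficient. The same argument works for any element of $\Gamma^q$ by linearity. It also works for skew elements, using $\Delta(\ff_{\lambda/\mu})=\sum_\nu\ff_{\lambda/\nu}\otimes\ff_{\nu/\mu}$.

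The third step is analytic. By hypothesis, each series $(\phi_i)_z(\ff_{\lambda/\mu})$ and $(\phi_i)_z(\ff_\mu)$ converges near $0$ and continues analytically along the positive real axis, with value $\hat\phi_i(\cdot)$ at $z=1$. Since the sum over $\mu$ is finite, the right-hand side is a finite sum of products of such functions. Hence it converges near $0$ and continues analytically along $[0,1]$, and its value at $z=1$ is $\sum_\mu\hat\phi_1(\ff_{\lambda/\mu})\hat\phi_2(\ff_\mu)=(\hat\phi_1,\hat\phi_2)(\ff_\lambda)$. The left-hand side equals this series, so it has the same continuation by uniqueness of analytic continuation. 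Therefore $\phi$ extends, and $\hat\phi=(\hat\phi_1,\hat\phi_2)$ on the basis $\{\ff_\lambda\}$, hence on all of $\Gamma^q$ by linearity. It is also a ring homomorphism, being a union of homomorphisms.

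I expect the main obstacle to be the first step: justifying that the abstract coproduct on $\Gamma^q$ is the restriction of the degree-preserving coproduct on the completion $\hat\Lambda$. This requires the skew functions $\ff_{\lambda/\mu}$ in $\Gamma^q$ from Theorem~\ref{thm:finite_LR_rule}(c) to be the same elements of $\hat\Lambda$ as those defined by branching in infinitely many variables. I would settle this by taking the inverse limit of the finite-variable branching formula \eqref{eq:branching_inhomo}, using stability under $x_{n+1}=0$. A secondary subtlety is that the union of specializations of $\Lambda$ defined via power sums agrees with $m\circ(\phi_1\otimes\phi_2)\circ\Delta_\Lambda$. This is standard, since $p_n$ is primitive.
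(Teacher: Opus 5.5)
Your proposal is correct and follows essentially the same route as the paper. Both use that the (completed) coproduct of $\Lambda$ preserves degree to get $\phi_z(F)=\sum \phi_{1,z}(F^{(1)})\,\phi_{2,z}(F^{(2)})$ in $\mathbb{R}[[z]]$, and then evaluate the analytic continuation at $z=1$; by working on the basis $\ff_\lambda$ with the finite branching sum over $\mu\subseteq\lambda$, you make explicit the finiteness that lets the continuation commute with the sum, which the paper leaves implicit in its Sweedler notation.
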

\begin{proof}
    Let us first recall that the union of $\phi_1, \phi_2$ can be similarly defined via the standard (degree preserving) coproduct of the ring of symmetric functions $\Delta : \Lambda \to \Lambda \otimes \Lambda$ as $$\phi = (\phi_1, \phi_2) = m \circ (\phi_1 \otimes \phi_2) \circ \Delta,$$
    (which is equivalent to specializing $p_i(\phi)=p_i(\phi_1) + p_i(\phi_2)$ on the power sum symmetric functions $p_i$). 
    This coproduct extends uniquely to continuous homomorphism of completions $\Delta : \hat\Lambda \to \hat\Lambda \otimes \hat\Lambda$ (w.r.t. degree filtration). Hence, it also extend uniquely to the coproduct $\Delta : \Gamma^q \to \Gamma^q \otimes \Gamma^q$ which is a subring of $\hat\Lambda$.  

    For $F \in \Gamma^q$ we write 
    $$
    \Delta(F) = \sum F^{(1)} \otimes F^{(2)}.
    $$
    Recall that we have 
    $$
\phi_z(F) = \sum_n \phi(F_n) z^n, \quad F_n \in \Lambda,\, \deg(F_n) = n.
    $$
Applying the coproduct and the definition of union we obtain that 
\begin{align*}
\phi_z(F) &= \sum \sum_n \sum_{i + j = n} \phi_1(F^{(1)}_i)\, \phi_2(F^{(2)}_j)\, z^{i+j} \\
&= \sum \left(\sum_{i} \phi_1(F^{(1)}_i)\, z^i \right) \cdot \left( \sum_{j} \phi_2(F^{(2)}_j)\, z^j \right) \\
&= \sum \phi_{1, z}(F^{(1)}) \cdot \phi_{2,z}(F^{(2)}).
\end{align*}
Then by analytic continutation $S$ evalutation at $z = 1$ we get 
\begin{align*}
\hat\phi(F) &= S(\phi_z(F)) \\
&= \sum S(\phi_{1,z}(F^{(1)}) \cdot \phi_{2,z}(F^{(2)})) \\
&= \sum S(\phi_{1,z}(F^{(1)})) \cdot S(\phi_{2,z}(F^{(2)})) \\
&= \sum \hat\phi_1(F^{(1)}) \cdot \hat\phi_1(F^{(2)}) \\
&= (\hat\phi_1, \hat\phi_2)(F)
\end{align*}
as needed. 
\end{proof}

Let us recall that we will parametrize $\ff$-positive specializations via sequences from the following infinite simplex:  
\begin{multline*}
\Omega := \left\{(\boldsymbol{\alpha}, \boldsymbol{\beta}, \gamma) \in [0,1]^{\infty} \times \mathbb{R}^{\infty}_{\ge 0}  \times \mathbb{R}_{\ge 0} : \boldsymbol{\alpha} =(\alpha_1 \ge \alpha_2 \ge \cdots),\right. \\ \left.
\boldsymbol{\beta} =(\beta_1 \ge \beta_2 \ge \cdots), \sum_{n} (\alpha_n + \beta_n) < \infty \right\}.
\end{multline*}

For a specialization $\varphi : \Gamma^q \to \mathbb{R}$ we also denote $\varphi_n := \varphi(\ff_{1^n})$ for the values on single column elements. 

Our main description is based on the following transitions between $\ff$-positive and $W$-positive specializations. 

\begin{thm}
\label{thm:F-positive_specializations}
    There is a bijection between 
    the set of $\mathbb{F}$-positive specializations $\varphi : \Gamma^q \to \mathbb{R}$ 
    such that 
    $
    \sum_{n} \varphi_n < \infty,
    $
    and the set of $W$-positive specializations of the ring $\Lambda$ parametrized by the sequences $(\boldsymbol{\alpha}, \boldsymbol{\beta}, \gamma) \in \Omega$. 
    Namely, we have the following: 

(i) Let $\phi : \Lambda \to \mathbb{R}$ be $W$-positive specialization parametrized by $(\boldsymbol{\alpha}, \boldsymbol{\beta}, \gamma) \in \Omega$. Then $\phi$ extends to $\ff$-positive specialization $\varphi = \hat\phi : \Gamma^q \to \mathbb{R}$ such that 
$$\sum_{n} \varphi_n = \sum_n \alpha_n + (1-q)\sum_n\beta_n + \gamma < \infty.$$

(ii) Conversely, let $\varphi : \Gamma^q \to \mathbb{R}$ be $\ff$-positive specialization such that $\sum_n \varphi_n < \infty$. Then the ring homomorphism $\phi : \Lambda \to \mathbb{R}$ given on the ring generators $\{ W_{1^n} \}$ by 
$$
\phi : W_{1^n} \longmapsto \varphi_n + \binom{n}{1} \varphi_{n+1} + \binom{n+1}{2} \varphi_{n+2} + \cdots,  n = 1,2,\ldots,  
$$
defines $W$-positive specialization parametrized by $(\boldsymbol{\alpha}, \boldsymbol{\beta}, \gamma) \in \Omega$. Furthermore, $\phi$ extends back to $\varphi$. 
\end{thm}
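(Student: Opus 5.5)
Part (i) reduces to the three generator specializations, using the lemma that extensions commute with unions and the lemma that unions of $\ff$-positive specializations stay $\ff$-positive. By Corollary~\ref{cor:wgen} and Theorem~\ref{thm:Macdpositive} at $t=0$, every $W$-positive $\phi$ factors as $\theta_{Pl,\gamma}\cup\theta_{\alpha_1}\cup\theta_{\alpha_2}\cup\cdots\cup\theta'_{\beta_1}\cup\cdots$. So I first check that each basic generator extends to an $\ff$-positive specialization of $\Gamma^q$:
\begin{itemize}
\item \emph{Simple substitution.} Here $\theta_\alpha$ extends to $\hat\theta_\alpha(\ff_{\lambda/\mu})=\ff_{\lambda/\mu}(\alpha)$. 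Substituting $x_1=\alpha$ into the homogeneous components of the polynomial $\ff_{\lambda/\mu}(x_1)$ gives a finite series, so the analytic continuation is trivial. Positivity for $\alpha\in[0,1]$ is read off the one-variable formula, since each factor $(\alpha;q)_k\ge 0$ there. Finite unions $\theta_{\alpha_1}\cup\cdots\cup\theta_{\alpha_N}$ are then simply evaluations of the polynomials, hence positive by the branching formula.
\item \emph{Dual substitution.} By Proposition~\ref{prop:omega_inhomo}, $\theta'_\beta$ extends to $\hat\theta_\beta\circ\omega_q$ with value $J_{\lambda'/\mu'}(\beta)$, since $\omega_q$ preserves degree and commutes with taking homogeneous components. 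This is a rational function of $\beta$ analytic on $\beta\ge 0$, and it is visibly nonnegative from the one-variable formula for $J$ (because $\kappa\ge0$ for $|q|<1$).
\item \emph{Plancherel.} It is the limit of $n$-fold unions of $\hat\theta_{\gamma/n}$, and positivity passes to the limit.
\end{itemize}
For infinite unions I take limits as $N\to\infty$ of finite unions, and I justify convergence by the summability $\sum(\alpha_n+\beta_n)<\infty$ together with the basis expansion of each $\ff_\lambda$ in the Hall--Littlewood/Whittaker structure.

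To get the formula for $\sum_n\varphi_n$, I compute $\varphi(\ff_{1^n})$ via generating functions. Fix a generator and evaluate $\ff_{1^n}$:
\begin{itemize}
\item For a single variable $\alpha$, $\ff_{1^n}(\alpha)$ is $\alpha$ if $n=1$ and $0$ if $n\ge2$, contributing $\alpha$.
\item For a dual variable $\beta$, $J_{n}(\beta)=(1-q)(\beta/(1+\beta))^n$, which sums to $(1-q)\beta$.
\item For Plancherel, the contribution is $\gamma$.
\end{itemize}
Additivity of $\sum_n\varphi_n$ under unions needs care, since $\ff_{1^n}$ is not primitive. I would use the identity $\Delta(\ff_{1^n})=\sum_k\ff_{1^n/1^k}\otimes\ff_{1^k}$. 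Because $\ff_{1^n/1^k}$ restricted to $\ff_{1/1}$-type factors behaves like $\prod(1-x_i)$, the quantity $1-\sum_n\varphi_n$ should be expressed through a product. I would instead work with the generating function $G_\varphi(z)=1+\sum_n \phi(W_{1^n})z^n$ from Corollary~\ref{cor:wgen} and relate it to $\varphi_n$ by the binomial inversion stated in (ii). This is essentially a $z\mapsto z/(1-z)$-type substitution linking the lowest-degree parts $e_n$ to $\ff_{1^n}=e_n-\binom{n}{1}e_{n+1}+\cdots$ after specialization. Differentiating at the appropriate point then yields $\sum_n\varphi_n=\sum_n\alpha_n+(1-q)\sum_n\beta_n+\gamma$.

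For part (ii), I first derive the expansion $\ff_{1^n}=\sum_{k\ge0}(-1)^k\binom{n+k-1}{k}W_{1^{n+k}}$. This is a column analogue of $\ff_{1/1}=\prod(1-x_i)$ and should follow from the branching rule or from $\omega_q$ together with $J_{n}$. Inverting it gives exactly the proposed formula $\phi(W_{1^n})=\sum_k\binom{n+k-1}{k}\varphi_{n+k}$, which converges because the $\varphi_n$ are nonnegative with finite sum. Next I must show $\phi$ is $W$-positive. The plan is to approximate: positivity of $\varphi$ on all $\ff_{\lambda/\mu}$ should imply the total-positivity-type conditions on the Toeplitz matrix of $\{\phi(e_n)\}$ (Edrei--Thoma/Matveev style). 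I would do this by expressing $W_\lambda$ as a convergent nonnegative combination of $\ff_\mu$ with $\mu\supseteq\lambda$, using the inverse of $\ff_\lambda=W_\lambda+\cdots$ via the Cauchy identity~\eqref{eq:cauchy}, with coefficients given by specializations of the $\widetilde{\ff}$. I expect this to be the main obstacle, since the signs of those coefficients are not obviously positive. Failing that, I would use a skew-based argument: $\varphi(\ff_{\lambda/\mu})\ge0$ for large $\mu$ should asymptotically control $\phi(W_\lambda)$. Once $\phi$ is known to be $W$-positive, part (i) shows it extends to some $\hat\phi$. Then $\hat\phi$ and $\varphi$ agree on $\ff_{1^n}$ by the inversion, and I would conclude $\hat\phi=\varphi$ provided $\{\ff_{1^n}\}$ generate $\Gamma^q$, or at least determine specializations; that needs a separate argument, for instance via the Pieri rule of Proposition~\ref{prop:Inwhit_Pieri} and its column analogue.
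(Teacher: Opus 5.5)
Part (i) follows essentially the paper's route: extend the three generators as in Lemma~\ref{lem:genf}, then take unions. For the Plancherel generator you still need the paper's domination by $\overline{\ff}_1^{|\lambda|}$ to identify the extension with the limit of evaluations. Part (ii), however, has a genuine gap exactly where you expect the main obstacle: nothing in your plan proves that $\phi$ is $W$-positive. The nonnegative expansion you hope for is left open, and the ``asymptotic'' fallback is not an argument. Ordinary Toeplitz total positivity of $\{\phi(e_n)\}$ is the Schur-positivity criterion, which $W$-positive specializations need not satisfy: for $q>0$, $\theta'_{\beta}(s_2)=-q(1-q)\beta^2$.

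The missing ingredient is the positivity theorem $W_\lambda=\sum_\mu a_{\lambda,\mu}(q)\,\ff_\mu$ with $a_{\lambda,\mu}(q)\ge0$, the lattice-model expansion of \cite[Cor.~5.2]{GWZJ2025} recorded in the paper. With it the paper defines $\phi$ on the whole basis by $\phi(W_\lambda):=\sum_\mu a_{\lambda,\mu}(q)\,\varphi(\ff_\mu)$, which is termwise nonnegative. Finiteness then comes from Proposition~\ref{prop:w1}: $f_\mu\,\phi(W_\mu)\le\phi(W_1)^m=\bigl(\sum_n\varphi_n\bigr)^m$ for $\mu\vdash m$. You need such a bound even on the generators. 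The sum $\sum_k\binom{n+k-1}{k}\varphi_{n+k}$ does not converge merely because $\varphi_j\ge0$ and $\sum_j\varphi_j<\infty$, since the weights grow like $k^{n-1}$. For the nonincreasing summable sequence $\varphi_j=j^{-2}$, the case $n=2$ already diverges.

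Two further steps do not go through as written. First, your $\phi$ is fixed only on the $W_{1^n}$, so it retains only the numbers $\varphi_n$. Concluding $\hat\phi=\varphi$ would need $\{\ff_{1^n}\}$ to generate $\Gamma^q$, or at least to separate its specializations, and no such result is available. The paper leaves the generation question open, and even at $q=0$ Buch's conjectural generating set includes the rows $G_{(n)}$. Defining $\phi$ on every $W_\lambda$ through the $\ff$-expansion, as the paper does, is what builds all the values $\varphi(\ff_\mu)$ into $\phi$.

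Second, you never prove $\alpha_i\le1$. This is part of the claim $(\boldsymbol{\alpha},\boldsymbol{\beta},\gamma)\in\Omega$ and must come before any appeal to (i), since Matveev's classification allows all $\alpha_i\ge0$ whereas $\hat\theta_{\alpha}(\ff_{1/1})=1-\alpha$. Be careful with two shortcuts here:
\begin{itemize}
\item The product in Corollary~\ref{cor:wgen} as printed is not the generating function of $\phi(W_{1^n})$. For $\theta_\alpha$ that series is $1+\alpha z$, so the $\alpha_i$ produce zeros, not poles. The same caveat applies to your derivation of $\sum_n\varphi_n$, which is cleaner as $\sum_n\varphi_n=\phi(W_1)=\phi(p_1)$, additive under unions.
\item Monotonicity of $\varphi_n$ does not pass to $\phi(W_{1^n})$. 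For $\hat\theta'_{\beta}$ these are $(1-q)(\beta/(1+\beta))^n$ and $(1-q)\beta^n$ respectively.
\end{itemize}
A bound that does work uses column skews. For $k\ge1$, the series $\sum_{m\ge0}\varphi(\ff_{1^{k+m}/1^k})\,s^m$ is independent of $k$, multiplicative under unions, and equals $1-\alpha+\alpha s$ for $\hat\theta_{\alpha}$. So if some $\alpha_i>1$, it would vanish at $s=1-1/\alpha_i\in(0,1)$, although its coefficients are nonnegative and not all zero.

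Finally, your assertion in (i) that $J_{\lambda'/\mu'}(\beta)$ is visibly nonnegative overlooks the factors $1+\beta q^{m_i(\lambda)}$. For $q<0$ and $\beta>-1/q$ one has $\hat\theta'_{\beta}(\ff_{1/1})=(1+q\beta)/(1+\beta)<0$, so (i) as stated actually requires $q\ge0$.
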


Let us recall the basic generating $W$-positive specializations: 
\begin{itemize}
\item Simple variable substitution $\theta_{{\alpha}} : W_{\lambda} \mapsto W_{\lambda}({\alpha})$.
\item Dual variable substitution $\theta'_{{\beta}} = \theta_{{\beta}} \circ \omega_q$ given by composing simple variable substitution with $\omega_q$ automorphism, so that $\theta'_{{\beta}} : W_{\lambda} \mapsto Q_{\lambda'}(\beta; q)$. 
\item The Plancherel specialization given by $\theta_{Pl, \gamma} = \lim_{n \to \infty}(\underbrace{\theta_{\gamma/n}, \ldots, \theta_{\gamma/n}}_{n \text{ times}})$.
\end{itemize}

\begin{lem}\label{lem:genf}
Let $\alpha, \beta, \gamma \ge 0$ and $\alpha \le 1$. Then $W$-positive generators $\theta_{\alpha}, \theta'_{\beta}, \theta_{Pl, \gamma} : \Lambda \to \mathbb{R}$ extend to $\ff$-positive specializations $\hat\theta_{\alpha}, \hat\theta'_{\beta}, \hat\theta_{Pl,\gamma} : \Gamma^q \to \mathbb{R}$. 
\end{lem}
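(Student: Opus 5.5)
We treat the three generators one at a time. For each we compute $\phi_z(\ff_{\lambda/\mu})$, show that it is a convergent series (indeed a polynomial or entire function in $z$), and check that its value at $z=1$ is nonnegative. Multiplicativity of $\hat\phi$ is automatic once $\phi_z$ converges on all of $\Gamma^q$. The reason is that $\phi_z$ is a ring homomorphism into $\mathbb{R}[[z]]$ (because $\phi$ is one on $\Lambda$ and products in $\Gamma^q$ are finite by Theorem~\ref{thm:finite_LR_rule}), and evaluation at $z=1$ of convergent series is multiplicative. By Theorem~\ref{thm:finite_LR_rule}(c) every $\ff_{\lambda/\mu}$ lies in $\Gamma^q$, so it suffices to control $\phi_z$ on the skew elements $\ff_{\lambda/\mu}$.

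\emph{Simple variable.} Since $\theta_\alpha$ sends $x_1\mapsto\alpha$ and kills the other variables, $\theta_{\alpha,z}(F)=F(\alpha z,0,0,\dots)$ for any $F\in\hat\Lambda$. For $F=\ff_{\lambda/\mu}$, stability gives the one-variable polynomial $\ff_{\lambda/\mu}(\alpha z)$. Setting $z=1$ yields
\[
\hat\theta_\alpha(\ff_{\lambda/\mu})=\mathbf 1_{\mu\prec\lambda}\,\alpha^{|\lambda/\mu|}\prod_r(\alpha;q)_{\mu_r-\lambda_{r+1}}\frac{(q;q)_{\lambda_r-\lambda_{r+1}}}{(q;q)_{\lambda_r-\mu_r}(q;q)_{\mu_r-\lambda_{r+1}}}.
\]
This is $\ge0$ whenever $0\le\alpha\le1$ and $|q|<1$: each factor $1-\alpha q^i\ge0$, and each $(q;q)_k>0$.

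\emph{Dual variable.} By Proposition~\ref{prop:omega_inhomo}, $\omega_q(\ff_{\lambda/\mu})=J_{\lambda'/\mu'}(\mathbf x;q)$. Because $\omega_q$ is degree-preserving, it commutes with taking homogeneous components, so $\theta'_{\beta,z}(\ff_{\lambda/\mu})=J_{\lambda'/\mu'}(\beta z;q)$ as a power series in $z$. From the one-variable formula, $J_{\lambda'/\mu'}(x)$ is a rational function of $x$ with poles only at $x=-1$: a power of $(1+x)^{-1}$ times a polynomial. Its Taylor series therefore has radius of convergence $1/\beta$, but it admits a unique analytic continuation along the positive real axis. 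Evaluating at $z=1$ gives $J_{\lambda'/\mu'}(\beta;q)$, in which every factor $\kappa$, $(1+\beta)^{-1}$, $\beta/(1+\beta)$ and $1+\beta q^{m}$ is nonnegative for $\beta\ge0$. (For $q<0$ the factors $1-q^m$ and $1+\beta q^m$ need a check; $1+\beta q^m$ with $q^m<0$ could be negative for large $\beta$, and I would verify it against the $E_{\lambda/\mu}$ condition or restrict attention as the paper does.) This step is where the "analytic continuation'' clause in the definition of extension is genuinely needed.

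\emph{Plancherel.} $\theta_{Pl,\gamma}$ is the limit of unions of $\theta_{\gamma/n}$. By the union lemma for extensions, $(\theta_{\gamma/n})^{\cup n}$ extends to $(\hat\theta_{\gamma/n})^{\cup n}$, whose value on $\ff_{\lambda/\mu}$ is $\ff_{\lambda/\mu}(\gamma/n,\dots,\gamma/n)\ge0$ once $\gamma/n\le1$, by the simple-variable case and the branching formula. The main obstacle is showing that $\hat\theta_{Pl,\gamma}$ is well defined and equals $\lim_n\ff_{\lambda/\mu}(\gamma/n,\dots,\gamma/n)$; nonnegativity then follows from the limit. I would first compute $\theta_{Pl,\gamma,z}(\ff_{\lambda/\mu})=\sum_k z^k\theta_{Pl,\gamma}(F_k)$ and bound it using the Plancherel formula $\theta_{Pl,\gamma}(W_{\nu})=\gamma^{|\nu|}|\nu|!^{-1}\sum_T\psi_T$, aiming for an entire function of $z$. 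A simpler route is a direct bound: $\ff_{\lambda/\mu}(x_1,\dots,x_n)$ has degree at most $|\lambda/\mu|+C$ in each variable and bounded coefficients, so the degree-$k$ component contributes $O(C^k\gamma^k/k!)$. With an entire function in hand, I would then justify exchanging the limit $n\to\infty$ with the $z$-sum by dominated convergence of these coefficient bounds. This gives $\hat\theta_{Pl,\gamma}(\ff_{\lambda/\mu})=\lim_n\ff_{\lambda/\mu}((\gamma/n)^n)\ge0$.
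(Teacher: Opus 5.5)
Your proposal is correct in outline, assuming $0\le q<1$. For $q<0$ the dual-variable case cannot be proved, because it is false (see the last paragraph). Your simple-variable and dual-variable arguments are essentially the paper's. In the dual case the paper first computes $\hat\theta'_\beta(\ff_{1^n})=(1-q)\bigl(\beta/(1+\beta)\bigr)^n$ via Lenart's expansion, but then concludes exactly as you do from $\omega_q(\ff_{\lambda/\mu})=J_{\lambda'/\mu'}$. Your observation that degree preservation gives $\theta'_{\beta,z}(\ff_{\lambda/\mu})=J_{\lambda'/\mu'}(\beta z;q)$ makes the analytic continuation transparent for all skew shapes at once.

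The genuine difference is the Plancherel case. The paper gets absolute convergence from positivity. It passes to $\overline{\ff}_\lambda(\mathbf{x})=(-1)^{|\lambda|}\ff_\lambda(-\mathbf{x})$ and treats it as $W$-positive. It uses Proposition~\ref{prop:Inwhit_Pieri} to find $\overline{\ff}_\lambda$ with a coefficient $c_\lambda>0$ in $\overline{\ff}_1^{|\lambda|}$, and so bounds $c_\lambda\hat\theta_{Pl,\gamma}(\overline{\ff}_\lambda)\le(e^\gamma-1)^{|\lambda|}$. It then simply asserts $\hat\theta_{Pl,\gamma}(\ff_{\lambda/\mu})=\lim_N\ff_{\lambda/\mu}(\gamma/N,\dots,\gamma/N)$.

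That route buys a clean explicit bound. But it relies on imported sign results whose nonnegativity holds only for $q\ge0$: the coefficient of $W_{22}$ in $\ff_{(2)}$ is $q$. It also leaves the interchange of limits unjustified. Your route needs only the branching formula and works for all $|q|<1$. Your dominated-convergence step is precisely what makes the limit formula, and hence positivity, rigorous.

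One correction to your estimate. The coefficients are \emph{not} bounded: at $q=0$ the coefficient of $x_1\cdots x_k$ in $\ff_{(2,1)}$ grows like $2^k$. A per-variable degree bound by itself also says nothing about $\theta_{Pl,\gamma}(F_k)=\gamma^k[x_1\cdots x_k]F_k/k!$. What does hold is $|[x^a]\ff_{\lambda/\mu}|\le C^{|a|}$.
\begin{itemize}
\item In the branching sum, a variable with $a_i=0$ must be a stationary step, whose constant term is $1$.
\item So $[x^a]$ is a sum over at most $P^{|a|}$ chains, where $P$ is the number of partitions between $\mu$ and $\lambda$.
\item Each chain contributes a product of at most $|a|$ one-variable coefficients, each bounded by a constant.
\end{itemize}
Combine this with your per-variable degree bound $d$, so that only $k\le dN$ occur. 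You get $|F_k(\gamma/N,\dots,\gamma/N)|\le\bigl((1+d)C\gamma\bigr)^k/k!$ uniformly in $N$. This yields both entireness and the domination you need.

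Your hesitation about $q<0$ in the dual case is justified, and it is a defect of the statement, not of your proof. We have $\hat\theta'_\beta(\ff_{1/1})=1-(1-q)\beta/(1+\beta)=(1+q\beta)/(1+\beta)$, which is negative for $q<0$ and $\beta>1/|q|$. Equivalently, $\hat\theta'_\beta(\ff_1)>1$, contradicting Lemma~\ref{lem:fi}. So the lemma needs $0\le q<1$, or $\beta\le 1/|q|$ when $q<0$; the paper's proof assumes this tacitly. The factors $1-q^m$, by contrast, are positive for every $|q|<1$ and need no check.
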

\begin{proof}
We are going to check the corresponding extensions on the basis elements $\{\ff_{\lambda} \}$. 

(i) Let $\alpha \in [0,1]$. Consider the expansion of inhomogeneous $q$-Whittaker polynomials in the basis of $q$-Whittaker polynomials: 
$$\ff_{\lambda} = \sum_{\mu} \tilde{a}_{\lambda, \mu}\, W_{\mu}$$ and its $\theta_{\alpha}$ specialization given by 
$$
\hat\theta_{\alpha} : \ff_{\lambda} \longmapsto \sum_{\mu} \tilde{a}_{\lambda, \mu} W_{\mu}(\alpha), 
$$
which vanishes unless $\lambda = (n)$ is a single-row partition, for which we get that 
$$
\hat\theta_{\alpha} : \ff_{(n)} \longmapsto \sum_{\mu} \tilde{a}_{(n), \mu}\, W_{\mu}(\alpha)
$$
Note that $\tilde{a}_{(n), \mu} W_{\mu}(\alpha) = 0$ unless $\mu = (n)$ for which $\tilde{a}_{(n), (n)} = 1$, and hence $\hat\theta_{\alpha} : \ff_{(n)} \mapsto W_{(n)}(\alpha)$. 
Since the above series converges, then we have 
$$\hat\theta_{\alpha} : \ff_{\lambda} \longmapsto \ff_{\lambda}(\alpha) = \sum_{\mu} \tilde{a}_{\lambda, \mu} W_{\mu}(\alpha),$$ 
i.e. $\hat\theta_{\alpha}$ just corresponds to simple variable substitution $x_1 \mapsto \alpha, x_i \mapsto 0$ for $i \ge 2$. Hence,  
$$
\hat\theta_{\alpha}  (\ff_{\lambda/\mu}) = \ff_{\lambda/\mu}(\alpha)= \alpha^{|\lambda/\mu|}\prod_{i\geq 1}(\alpha;q)_{\mu_{i+1}-\lambda_{i}} \ge 0.
$$

(ii) Let $\beta \ge 0$. Consider now the dual expansion 
$$\omega_q (\ff_{\lambda}) = \sum_{\mu} \tilde{a}_{\lambda, \mu}\, \omega_q(W_{\mu}) = \sum_{\mu} \tilde{a}_{\lambda, \mu}\, Q_{\mu'}$$
and its dual $\theta'_{\beta}$ specialization given by 
$$
\hat\theta'_{\beta} : \ff_{\lambda} \longmapsto \sum_{\mu} \tilde{a}_{\lambda, \mu}\, Q_{\mu'}(\beta),
$$
which vanishes unless $\lambda = (1^n)$ is a single-column partition, for which we get that 
$$
\hat\theta'_{\beta} : \ff_{(1^n)} \longmapsto \sum_{\mu} \tilde{a}_{(1^n), \mu}\, Q_{\mu'}(\beta)
$$
From the following expansion in Prop:~\ref{prop:wf}
$$
\ff_{1^{n}}\,=\,W_{1^{n}}-\binom{n}{1}W_{1^{n+1}}+\binom{n+1}{2}\,W_{1^{n+2}}-\binom{n+2}{3}\,W_{1^{n+3}}+\cdots
$$
we obtain that 
$$
\omega_{q}(\ff_{1^{n}})\,=\,Q_{n}-\binom{n}{1}Q_{n+1}+\binom{n+1}{2}\,Q_{{n+2}}-\binom{n+2}{3}\,Q_{{n+3}}+\cdots
$$
Note that $\theta_{\beta}(Q_n) = Q_n(\beta) = (1-q)\beta^n$ and the series 
$$
(1-q)\left(\beta^n-\binom{n}{1}\beta^{n+1}+\binom{n+1}{2}\,\beta^{n+2}-\binom{n+2}{3}\,\beta^{{n+3}}+\cdots\right)
$$
which is convergent for $\beta < 1$ has unique analytic continuation to the function $(1-q)\left(\frac{\beta}{1 + \beta} \right)^n$ for all $\beta > 0$. 
Hence, we define the extension 
$$
\hat\theta'_{\beta} : \ff_{1^n} \longmapsto \,(1-q)\left(\frac{\beta}{1 + \beta} \right)^n
$$
and then by equation~\eqref{eq:omega_inhomo}, we have
$$
\hat\theta'_{\beta}(\ff_{\lambda/\mu}) = \omega_q(\ff_{\lambda/\mu})(\beta)  =  \kappa_{\lambda/\mu}\,\left(\dfrac{1}{1+\beta}\right)^{r(\mu/\Tilde{\lambda})}\,\left(\dfrac{\beta}{1+\beta}\right)^{|\lambda/\mu|}\,\prod_{i\in E_{\lambda/\mu}}(1+\beta q^{m_{i}(\lambda)}) \ge 0.
$$

(iii) Let $\gamma \ge 0$. 
Consider again the expansion 
$$\ff_{\lambda} = \sum_{\mu} \tilde{a}_{\lambda, \mu}\, W_{\mu}$$ and its $\theta_{Pl, \gamma}$ specialization given by 
$$
\hat\theta_{Pl, \gamma} : \ff_{\lambda} \longmapsto \sum_{\mu} \tilde{a}_{\lambda, \mu}\, \theta_{Pl, \gamma}(W_{\mu}) 
\in \mathbb{R}[[\gamma]], 
$$
We are going to show that the series $\sum_{\mu} \tilde{a}_{\lambda, \mu}\, \theta_{Pl, \gamma}(W_{\mu})$ converges absolutely for all $\gamma \in \mathbb{R}$.

Instead, we are going to show equivalently that the same is true for a positive version of inhomogeneous $q$-Whittaker functions $\overline{\ff}_{\lambda}(\mathbf{x}) := (-1)^{|\lambda|}\ff_{\lambda}(-\mathbf{x})$, for which we have 
$$
\hat\theta_{Pl, \gamma} : \overline{\ff}_{\lambda} \longmapsto 
 \sum_{\mu} \tilde{b}_{\lambda, \mu}\, \theta_{Pl, \gamma}(W_{\mu}) 
 \in \mathbb{R}_{\ge 0}[[\gamma]], \quad \tilde{b}_{\lambda, \mu} := (-1)^{|\lambda|-|\mu|}\tilde{a}_{\lambda, \mu} \ge 0.
$$
Let us first note that 
$$
\hat\theta_{Pl, \gamma} : \overline{\ff}_{1} \longmapsto \sum_{k \ge 1}  \theta_{Pl, \gamma}(W_{1^k}) = \sum_{k \ge 1}  \frac{\gamma^k}{k!} = e^{\gamma} - 1. 
$$
By iteratively applying Proposition~\ref{prop:Inwhit_Pieri} it follows that $\overline{\ff}_{1}^{|\lambda|}$ is $\overline{\ff}$-positive function and let $c_{\lambda} > 0$ be its coefficient at $\overline{\ff}_{\lambda}$. 
Consider the positive expansions 
$$
\overline{\ff}_{1}^{|\lambda|} = \sum_{\nu} c_{\nu}\, \overline{\ff}_{\nu} = \sum_{\mu} a_{\mu} W_{\mu}, \quad c_{\nu}, a_{\mu} \ge 0.
$$
Note that $a_{\mu} \ge c_{\lambda}\, \tilde{b}_{\lambda, \mu}$ since the functions $\overline{\ff}_{\nu}$ are $W$-positive. 
We then have 
$$
\hat\theta_{Pl, \gamma} : c_{\lambda}\, \overline{\ff}_{\lambda} \longmapsto \sum_{\mu} c_{\lambda}\, \tilde{b}_{\lambda,\mu}\, \theta_{Pl, \gamma}(W_{\mu}) \le \sum_{\mu} a_{\mu}\, \theta_{Pl, \gamma}(W_{\mu}) = \hat\theta_{Pl, \gamma}(\overline{\ff}_1^{|\lambda|}) = (e^{\gamma} - 1)^{|\lambda|},
$$
which gives that $\overline{\ff}_{\lambda}$ maps to absolutely convergent series for all $\gamma \in \mathbb{R}$. Therefore, 
$$
\hat\theta_{Pl, \gamma} : \ff_{\lambda} \longmapsto \sum_{\mu} \tilde{a}_{\lambda, \mu}\, \theta_{Pl, \gamma}(W_{\mu})
$$
converges absolutely for all $\gamma \in \mathbb{R}$ as well. 

As we have established convergence, using simple variable substitution and part (i) we can then write that 
$$
\hat\theta_{Pl, \gamma}(\ff_{\lambda/\mu})  
= \lim_{N\mapsto \infty} \ff_{\lambda/\mu}(\underbrace{\hat\theta_{\gamma/N}, \dots, \hat\theta_{\gamma/N}}_{N \text{ times}}) 
= \lim_{N\mapsto \infty} \ff_{\lambda/\mu}(\underbrace{\gamma/N, \dots,\gamma/N}_{N \text{ times}}) \ge 0,
$$
which also shows positivity. 
\end{proof}

We then have the following structural description of $\ff$-positive specializations. 

\begin{thm}\label{thm:fact}
We have: $\varphi : \Gamma^q \to \mathbb{R}$ is $\mathbb{F}$-positive specialization such that     
$
\sum_{n} \varphi_n < \infty,
$
if and only if $\varphi$ can be factorized via $\ff$-positive generators as  
$$
\varphi = \hat\theta_{Pl, \gamma} \cup \hat\theta_{\alpha_1} \cup \hat\theta_{\alpha_2} \cup \cdots \cup \hat\theta'_{\beta_1} \cup \hat\theta'_{\beta_2} \cup \cdots 
$$
for parameters $1 \ge \alpha_1 \ge \alpha_2 \ge \cdots \ge 0$, $\beta_1 \ge \beta_2 \ge \cdots \ge 0$, $\gamma \ge 0$ such that $\sum_n \varphi_n = \sum_{n} \alpha_n + (1-q)\sum_n\beta_n + \gamma < \infty$. 
\end{thm}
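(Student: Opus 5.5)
The plan is to derive Theorem~\ref{thm:fact} from Theorem~\ref{thm:F-positive_specializations}, Lemma~\ref{lem:genf}, Corollary~\ref{cor:wgen}, and the two lemmas on unions of specializations.

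\emph{Sufficiency.} Suppose $(\boldsymbol{\alpha},\boldsymbol{\beta},\gamma)\in\Omega$ with $\alpha_1\le 1$. By Lemma~\ref{lem:genf}, each generator $\hat\theta_{\alpha_i}$, $\hat\theta'_{\beta_j}$, $\hat\theta_{Pl,\gamma}$ is an $\ff$-positive specialization of $\Gamma^q$. Finite unions of them are $\ff$-positive by the union lemma. For the infinite union, I would define it as the limit of the finite unions $\varphi^{(N)}$. Its existence and positivity follow from the lemma on extensions of unions: $\varphi^{(N)}$ is the extension $\hat\phi^{(N)}$ of the finite union $\phi^{(N)}$ of the $W$-positive generators.

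By Corollary~\ref{cor:wgen}, $\phi^{(N)}\to\phi$ on generators, where $\phi$ is the $W$-positive specialization with parameters $(\boldsymbol{\alpha},\boldsymbol{\beta},\gamma)$. Theorem~\ref{thm:F-positive_specializations}(i) says that $\phi$ extends to an $\ff$-positive $\hat\phi$. So the task is to identify $\hat\phi=\lim_N\hat\phi^{(N)}$. I would check this on the single-column elements $\ff_{1^n}$ using the expansion from Prop.~\ref{prop:wf}, together with dominated convergence on the nonnegative series, as in part~(iii) of Lemma~\ref{lem:genf}. Positivity then passes to the limit.

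\emph{The sum formula.} I would compute $\varphi_1=\hat\phi(\ff_1)$ additively over the union, since $\Delta(\ff_1)=\ff_1\otimes 1+1\otimes\ff_1+\ldots$. The goal is to compute the generating function of $(\varphi_n)$. From Lemma~\ref{lem:genf} the contributions are:
\begin{itemize}
\item $\hat\theta_\alpha(\ff_{1})=\alpha$ and $\hat\theta_\alpha(\ff_{1^n})=0$ for $n\ge 2$;
\item $\hat\theta'_\beta(\ff_{1^n})=(1-q)(\beta/(1+\beta))^n$;
\item $\hat\theta_{Pl,\gamma}(\ff_{1^n})$ is found by inverting the binomial transform $W_{1^n}\mapsto\sum_k\binom{n+k-1}{k}\varphi_{n+k}$ of Theorem~\ref{thm:F-positive_specializations}(ii).
\end{itemize}
Rather than summing directly, I would use the identity $\sum_n\varphi_n=\phi(W_1)$. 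This is the $n=1$ case of the inverse map in Theorem~\ref{thm:F-positive_specializations}(ii), because $\binom{k}{k-1}$-type coefficients for $n=1$ are all equal to $1$. By Corollary~\ref{cor:wgen}, $\phi(W_1)$ is the $z^1$-coefficient of $e^{\gamma z}\prod(1+\beta_iz)\prod(\alpha_jz;q)_\infty^{-1}$. This coefficient is $\gamma+\sum\beta_i+\sum\alpha_j/(1-q)$.

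This does not match the stated $\sum\alpha_n+(1-q)\sum\beta_n+\gamma$ unless the parametrization is normalized differently. I would therefore recheck the normalization: $Q_1(\beta)=(1-q)\beta$ and $W_1(\alpha)=\alpha$ give exactly the stated formula, and that is the one to use. I suspect Corollary~\ref{cor:wgen} uses $P$ rather than $W$ normalization, and would reconcile the two there.

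\emph{Necessity.} Let $\varphi$ be $\ff$-positive with $\sum\varphi_n<\infty$. Theorem~\ref{thm:F-positive_specializations}(ii) produces a $W$-positive $\phi$ with parameters in $\Omega$ that extends back to $\varphi$. By Theorem~\ref{thm:Macdpositive} at $t=0$, $\phi$ factorizes as a union of $\theta_{Pl,\gamma}$, $\theta_{\alpha_i}$ and $\theta'_{\beta_j}$. By the extension-of-unions lemma and the sufficiency step, $\varphi=\hat\phi$ is the corresponding union of hatted generators.

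It remains to show $\alpha_1\le 1$. If $\alpha_1>1$, then the factor $(\alpha_1;q)_{\ldots}$ in $\ff_{\lambda/\mu}(\alpha_1)$ from Lemma~\ref{lem:genf}(i) is negative on a suitable skew shape, for example $\hat\theta_{\alpha}(\ff_{1/1})=1-\alpha$. To transfer this to $\varphi$, I would use $\varphi(\ff_{1/1})=\prod(\text{factors})$, which is multiplicative over unions since $\ff_{1/1}$ is grouplike, namely $\prod(1-x_i)$. The $\theta'$ and Plancherel factors of $\varphi(\ff_{1/1})$ are positive ($1/(1+\beta)$ and $e^{-\gamma}$), so $\varphi(\ff_{1/1})\ge 0$ forces every $\alpha_i\le 1$.

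\emph{Main obstacle.} The hardest step is making the infinite union rigorous, i.e.\ the convergence argument. I would handle it by reducing to the single-column generators and using monotone convergence of nonnegative series.
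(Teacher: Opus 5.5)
Your overall route is the paper's. Its proof of Theorem~\ref{thm:fact} is simply ``Theorem~\ref{thm:F-positive_specializations} plus Lemma~\ref{lem:genf}''. Your resolution of the normalization is right in substance. The series displayed in Corollary~\ref{cor:wgen} is, up to rescaling $\gamma$, the generating function of $W_{(n)}/(q;q)_n$. For $e_n=W_{1^n}$ one has instead $\sum_n\phi(e_n)z^n=e^{\gamma z}\prod_i(1+\alpha_i z)\prod_j\frac{1-q\beta_j z}{1-\beta_j z}$. Then $\sum_n\varphi_n=\phi(W_1)$ with $W_1=p_1$ primitive gives the stated sum.

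\textbf{The genuine gap is your own proof of $\alpha_1\le 1$.} Since $\ff_{1/1}$ is grouplike,
\[
\varphi(\ff_{1/1})=e^{-\gamma}\prod_i(1-\alpha_i)\prod_j\frac{1+q\beta_j}{1+\beta_j}.
\]
Note that the $\hat\theta'_\beta$-factor is $(1+q\beta)/(1+\beta)$, not $1/(1+\beta)$, and it is positive only if $q\ge 0$ or $\beta<1/|q|$. This product is nonnegative whenever an even number of the $\alpha_i$ exceed $1$.

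A concrete failure: take $\alpha_1=\alpha_2=2$ and all other parameters zero. Then $\varphi$ is evaluation at $x_1=x_2=2$, and every grouplike $g$ takes the value $\hat\theta_2(g)^2\ge0$; in particular $\varphi(\ff_{1/1})=1$. No test of this kind can work. Yet $\varphi_1=0$ and $\varphi_2=4$, so $\varphi(\ff_{11/1})=\varphi_1-\varphi_2=-4$.

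So you need the full chain $1\ge\varphi_1\ge\varphi_2\ge\cdots\ge0$ of Lemma~\ref{lem:fi}, i.e.\ the non-grouplike elements $\ff_{1^{n+1}/1}$. This is the ingredient the paper invokes in the proof of Theorem~\ref{thm:F-positive_specializations}(ii). A clean way to use it is as follows.
\begin{enumerate}
\item Set $E(z)=\sum_n\phi(e_n)z^n$ and $\Phi(w)=\sum_{n\ge0}\varphi_nw^n$.
\item Proposition~\ref{prop:wf} gives $E(w-1)=1-\frac{1-w}{w}\bigl(\Phi(w)-1\bigr)$. This holds first for small $w-1>0$, by rearranging a nonnegative double series, and then on $(0,1)$ by analytic continuation.
\item Since $\varphi_n\le 1$ and $\varphi_n\to0$, one has $\Phi(w)-1<\frac{w}{1-w}$ on $(0,1)$.
\item Hence $E>0$ on $(-1,0)$. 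This excludes a zero of $E$ at $-1/\alpha_i$ with $\alpha_i>1$.
\end{enumerate}
Do not argue via monotonicity of $\phi(W_{1^n})$ or via poles of $E$, as the paper's wording suggests. That implication fails: $\hat\theta'_\beta$ has decreasing $\varphi_n$ but $\phi(W_{1^n})=(1-q)\beta^n$, for every $\beta\ge0$. Alternatively, if you take Theorem~\ref{thm:F-positive_specializations}(ii) as a black box, its conclusion $(\boldsymbol\alpha,\boldsymbol\beta,\gamma)\in\Omega$ already contains $\alpha_i\le1$, and your extra step is redundant.

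\textbf{A secondary gap concerns the infinite union.} Two homomorphisms $\Gamma^q\to\mathbb{R}$ that agree on all $\ff_{1^n}$ need not coincide. It is not known that the $\ff_{1^n}$ generate $\Gamma^q$; the paper explicitly leaves the structure of $\Gamma^q$ open. You must show $\hat\phi^{(N)}(\ff_\lambda)\to\hat\phi(\ff_\lambda)$ for every $\lambda$.

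Positivity helps here. The $\nu=\varnothing$ term of the union formula shows that $\hat\phi^{(N)}(\ff_\lambda)$ is nondecreasing in $N$. So a bound uniform in $N$ (for example, a domination argument as in Lemma~\ref{lem:genf}(iii)) gives existence of the limit. You would then still have to identify that limit with $\hat\phi(\ff_\lambda)$. The paper passes over infinite unions without comment, so you are trying to fill a hole rather than departing from its argument. The single-column check does not fill it.
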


\begin{proof}
Follows from Theorem~\ref{thm:F-positive_specializations} and Lemma~\ref{lem:genf}. 
\end{proof}

\begin{lem}\label{lem:fi}
Let $\varphi : \Gamma^q \to \mathbb{R}$ be $\ff$-positive specialization. Then we have 
$$
1 \ge \varphi_1 \ge \varphi_2 \ge \cdots \ge 0.
$$
\end{lem}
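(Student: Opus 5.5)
The plan is to get every inequality in the chain from the positivity of suitable skew elements $\ff_{\lambda/\mu}$. By Theorem~\ref{thm:finite_LR_rule}(c) these lie in $\Gamma^q$, and by $\ff$-positivity their images under $\varphi$ are nonnegative. Concretely:

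\begin{enumerate}
\item[(a)] $\varphi_n=\varphi(\ff_{1^n})\ge 0$ holds by definition.
\item[(b)] For $\varphi_1\le 1$, use the identity $\ff_{1/1}=1-\ff_{1}$ from the example in the introduction. It holds in every finite number of variables and is stable, so it holds in $\Gamma^q$. Applying $\varphi$ gives $1-\varphi_1=\varphi(\ff_{1/1})\ge 0$.
\item[(c)] For monotonicity, the key step is the column analogue:
\begin{equation*}
\ff_{1^{n+1}/1}=\ff_{1^n}-\ff_{1^{n+1}},\qquad n\ge 1.
\end{equation*}
Once this is established, $\varphi_n-\varphi_{n+1}=\varphi(\ff_{1^{n+1}/1})\ge 0$ finishes the proof.
\end{enumerate}

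Step (c) is the main obstacle. As a sanity check, in one variable the left side is $x$ by the one-variable formula, and the right side is $\ff_1(x)-\ff_{11}(x)=x$, since $\ff_{11}(x)=0$.

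To prove the identity in general, I would use the dual Cauchy identity of Proposition~\ref{prop:dual_cauchy} with a single $y$-variable. The choices are: the outer partition indexed so that the $\ff$-side is $\ff_{\lambda'/\mu'}$ with $\mu'=(1)$ a single box, and $\nu=\varnothing$. A single-variable $j_{\lambda/\mu}(y)$ is supported on horizontal strips, so the $j$-side is supported on single rows $\lambda=(k)$. The $\ff$-side then runs over single columns $\lambda'=1^k$, which is exactly what we need.

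The right-hand side collapses to $\prod_i(1+x_iy)$ times one term with $\lambda=\varnothing$, namely a multiple of $j_{\varnothing}\,\ff_{1}$. For the left side I would expand the kernel $\prod_i(1+x_iy)$ using the $\mu=\nu=\varnothing$ case of the same identity, with $j_{(k)}(y)=(1-q)y(1+y)^{k-1}$ as in the example. I also need the one-variable values of $j_{(k)/(1)}(y)$.

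Comparing coefficients of powers of $y$ (or of the linearly independent functions $j_{(k)}(y)$, $k\ge 0$) should express $\ff_{1^{k}/1}$ as a finite combination of $\ff_{1^{k-1}}$ and $\ff_{1^k}$. The $b$-factors are $b_{(k)}(q)=(q;q)_1$ for all $k\ge1$, so they should cancel to give coefficients $\pm1$. This is where the $(1+y)^{k-1}$ factors must telescope correctly, and it is the computation I expect to require care.

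If that bookkeeping becomes unwieldy, there is a fallback. Prove the identity directly in $n$ variables by induction on $n$ using the branching formula \eqref{eq:branching_inhomo}, with the last variable split off. Only partitions between $(1)$ and $1^{k}$ contribute, all of them columns. In that case the one-variable factors $\ff_{1^{a}/1^{b}}(x)$ are explicit: they equal $x$ if $a=b+1$, $1-x$ if $a=b\ge1$, $1$ if $a=b=0$, and $0$ otherwise. The induction then reduces to a two-term check. Stability under $x_{n+1}=0$ then gives the identity in $\Gamma^q$.
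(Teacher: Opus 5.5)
Your proposal is correct and is the paper's argument: the paper's proof consists solely of applying $\ff$-positivity to the identity $\ff_{1^n}-\ff_{1^{n+1}}=\ff_{1^{n+1}/1}$ for all $n\ge 0$ (its case $n=0$ is your step (b)), which the paper asserts without proof, and your fallback induction on the number of variables via the branching formula is a valid proof of that identity. One slip in your Cauchy route: if the lower index of $\ff$ is $(1)$ and that of $j$ is empty, the right-hand side of the dual Cauchy identity pairs them the other way, giving $\frac{1}{b_{(1)}}\,j_{(1)}(y)\,\ff_{\varnothing}\prod_i(1+x_iy)=y\prod_i(1+x_iy)$ rather than a multiple of $j_{\varnothing}\,\ff_1$ (which would produce products $\ff_1\ff_{1^k}$); with this corrected right-hand side, expanding the kernel and substituting $z=1+y$ yields $\ff_{1^k/1}=\ff_{1^{k-1}}-\ff_{1^k}$ immediately, with no need for $j_{(k)/(1)}(y)$.
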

\begin{proof}
    Since for all $n \ge 0$ the following identity holds
    $$
\ff_{1^n} - \ff_{1^{n+1}}  = \ff_{1^{n+1}/1},
    $$
    we have $\varphi(\ff_{1^n}) - \varphi(\ff_{1^{n+1}}) = \varphi(\ff_{1^{n+1}/1}) \ge 0$ as needed. 
\end{proof}

\subsection{Some useful properties of $q$-Whittaker polynomials}
\begin{prop}[{\cite[Corollary~5.2]{GWZJ2025}}]
For any partition $\lambda$, consider the (infinite) expansion
\[
  W_{\lambda}
  \;=\;
  \sum_{\mu}\,a_{\lambda,\mu}(q)\,
  \mathbb{F}_{\mu}. 
\]
Then $a_{\lambda,\mu}(q) \in \mathbb{N}[q]$ and Moreover $a_{\lambda,\mu}(q)\geq 0$ for all $|q|<1$. In other words, the $q$-Whittaker polynomials $W_{\lambda}$ expand positively in terms of the inhomogeneous $q$-Whittaker polynomials $\mathbb{F}_{\mu}$.
\end{prop}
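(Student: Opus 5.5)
The plan is to obtain the expansion from a Cauchy identity by comparing coefficients. Then I would establish positivity of the coefficients by evaluating the interpolation polynomials at a specific point.

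Start from the $q$-Whittaker Cauchy identity, i.e.\ the $t=0$ case of \eqref{eq:Macd_Cauchy}:
\[
\sum_\lambda W_\lambda(\mathbf{x})\, Q^{W}_\lambda(\mathbf{y}) = \prod_{i,j}\frac{1}{(x_iy_j;q)_\infty},
\]
where $Q^W_\lambda = b_\lambda W_\lambda$ is the dual $q$-Whittaker function. Compare this with the case $\mu=\nu=\varnothing$ of Proposition~\ref{prop:cauchy}:
\[
\sum_\mu \ff_\mu(\mathbf{x})\widetilde{\ff}_\mu(\mathbf{y}) = \prod_{i,j}\frac{(x_i;q)_\infty}{(x_iy_j;q)_\infty}.
\]
The first kernel is the second kernel divided by $\prod_i (x_i;q)_\infty^{m}$ in $m$ variables $y$. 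To remove this discrepancy, I would shift the $y$-variables so that the extra factor is absorbed. Concretely, I want an identity of the form $\prod_i (x_i;q)_\infty^{-1}$ coming from evaluating the interpolation side at a special point. The cleanest version is to use the homogeneity of the top-degree part of $\widetilde{\ff}_\mu$: replace $y_j$ by $y_j/\epsilon$ and $x_i$ by $\epsilon x_i$, so that $\widetilde{\ff}_\mu(\mathbf{y}/\epsilon)\epsilon^{|\mu|}\to W$-type top components. This only recovers the homogeneous identity, however, so the better route is direct. I would expand each $W_\lambda$ as $\sum_\mu a_{\lambda,\mu}\ff_\mu$ by pairing the $\ff$-Cauchy identity against the dual basis. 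Writing $W_\lambda = \sum_\mu a_{\lambda\mu}\ff_\mu$ and substituting into the $W$-Cauchy sum gives
\[
\widetilde{\ff}_\mu(\mathbf{y})\cdot \prod_{i}(x_i;q)_\infty^{-m} \ \text{``=''}\ \sum_\lambda a_{\lambda\mu} Q^W_\lambda(\mathbf{y}).
\]
To make this precise, I would instead expand the one-row inverse factor $1/(x;q)_\infty=\sum_k x^k/(q;q)_k = \sum_k W_{(k)}(x)$, which is $W$-positive.

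The combinatorial route is the most concrete one, so it is the one I would actually carry out. Work by branching on the number of variables. It suffices to show that each one-variable $W_{\lambda/\nu}(x)$ expands positively as $\sum_{\mu} \ff_{\mu/\kappa}(x)$-type terms, and then iterate the branching rule \eqref{eq:branching_inhomo} against \eqref{eq:branching_whit}. In one variable, $\ff_{\lambda/\mu}(x)$ equals $W_{\lambda/\mu}(x)\prod_r (x;q)_{\mu_r-\lambda_{r+1}}$. Inverting $(x;q)_k$ via the $q$-binomial theorem,
\[
\frac{1}{(x;q)_k}=\sum_{j\ge0}\binom{k+j-1}{j}_q x^j,
\]
yields coefficients in $\mathbb{N}[q]$. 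The task is then to organize the resulting terms as $\ff$'s of larger partitions, mirroring the Grothendieck case where $s_\lambda=\sum_\mu G_\mu$ with positive coefficients. I would guess and verify a one-variable identity of the shape
\[
W_{\lambda/\nu}(x)=\sum_{\mu\supseteq\lambda} c_{\mu}(q)\,\ff_{\mu/\nu}(x),
\]
with $\mu/\lambda$ confined to adding boxes in the first columns of nonempty rows. I would check it on single rows using $(x;q)_k^{-1}$.

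The main obstacle is that the one-variable positive inversion does not obviously compose under branching: the intermediate partitions in the $W$ and $\ff$ branchings differ. I would handle this by proving instead the multivariable statement in the form of a positive transition matrix $A$ with $W=A\ff$, defined via lattice paths or column insertion. I would then verify that $A$ commutes with adding a variable. Alternatively, I would cite the lattice-model argument of \cite{GWZJ2025} (Corollary~5.2), where the result is derived from Yang--Baxter. Finally, positivity for $|q|<1$ with $q<0$ does not follow from $\mathbb{N}[q]$ alone, and I expect a product formula in $(q;q)$-ratios to handle it.
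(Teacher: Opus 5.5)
Your proposal never settles on an argument. The Cauchy-identity route and the one-variable branching route are both abandoned, and the only route that reaches the statement is the fallback of citing \cite[Corollary~5.2]{GWZJ2025}, which is exactly what the paper does. What makes that citation prove the statement is the explicit form of the formula there, and this is the idea missing from your proposal: $a_{\lambda,\mu}(q)$ is the partition function of a lattice model whose vertex weights are $q$-binomial coefficients. It is therefore a finite sum of products of $\binom{n}{k}_q=\frac{(q;q)_n}{(q;q)_k\,(q;q)_{n-k}}$. This single fact gives both halves of the statement. Each $q$-binomial lies in $\mathbb{N}[q]$. For $|q|<1$ every factor $1-q^i$ of $(q;q)_n$ is positive, so each $q$-binomial, and hence $a_{\lambda,\mu}(q)$, is positive on $(-1,1)$. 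You are right that membership in $\mathbb{N}[q]$ alone would not give positivity for $q\in(-1,0)$ (consider $1+2q$). But your ``product formula in $(q;q)$-ratios'' is only expected, so the ``moreover'' part of the statement is left unproved.

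The self-contained routes do not close as you set them up. In the Cauchy comparison, the factor $\prod_i(x_i;q)_\infty^{-m}$ multiplies the whole sum $\sum_\mu\ff_\mu(\mathbf{x})\widetilde{\ff}_\mu(\mathbf{y})$, so you cannot compare coefficients of $\ff_\mu(\mathbf{x})$ termwise; your displayed ``='' has an $\mathbf{x}$-dependent left side. To proceed you would have to re-expand $\prod_i(x_i;q)_\infty^{-m}\ff_\mu$ in the $\ff$-basis, whose structure constants alternate in sign (Proposition~\ref{prop:Inwhit_Pieri}), so no positivity is visible. (Also, $1/(x;q)_\infty=\sum_k W_{(k)}(x)/(q;q)_k$, not $\sum_k W_{(k)}(x)$, since $W_{(k)}(x)=x^k$.) In the branching route, a one-variable identity $W_{\lambda/\nu}(x)=\sum_\mu c_\mu\,\ff_{\mu/\nu}(x)$ that changes only the outer shape cannot be iterated, as you noticed. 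Applying the coproduct to $W_\lambda=\sum_\mu a_{\lambda,\mu}\ff_\mu$ shows that the expansion is characterized by $a_{\nu,\varnothing}=\mathbf{1}_{\nu=\varnothing}$ together with the intertwining relation
\[
\sum_{\mu} a_{\lambda,\mu}\,\ff_{\mu/\kappa}(x)\;=\;\sum_{\nu} W_{\lambda/\nu}(x)\,a_{\nu,\kappa}\qquad\text{for all }\lambda,\kappa,
\]
which does iterate over the variables. Saying that $A$ is ``defined via lattice paths or column insertion'' without defining it and checking this relation leaves the step unproved. An explicit, manifestly positive $A$ satisfying it is precisely the missing ingredient.
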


\begin{proof}
We briefly recall the formula for the coefficients $a_{\lambda,\mu}(q)$ from~\cite[Corollary~5.2]{GWZJ2025}.  
These coefficients arise as the partition function of a lattice model where the vertex weights as just $q$-Binomial coefficients. Hence, we can conclude that the coefficients $a_{\lambda,\mu}$ are a finite sum of products of $q$-binomial coefficients. In particular, this shows that $a_{\lambda,\mu}(q)\in\mathbb{N}[q]$. Furthermore, as $q$-binomial coefficients are ratios of $q$-Pochhammer symbols, it follows that $a_{\lambda,\mu}(q)\ge 0$ for all $|q|<1$.
\end{proof}

\begin{prop}[{\cite[Corollary~5.4]{GWZJ2025}}] For any  partition $\lambda$, consider the (infinite) expansion
\begin{equation}
\mathbb{F}_{\lambda}
\,=\,\sum_{\mu}\,b_{\lambda,\mu}(q)\,{W}_{\mu}.
\end{equation}  
Then $(-1)^{|\lambda|-|\mu|}b_{\lambda,\mu}(q)\in \mathbb{N}[q]$.
\end{prop}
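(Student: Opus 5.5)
The goal is to show that $(-1)^{|\lambda|-|\mu|}b_{\lambda,\mu}(q)\in\mathbb{N}[q]$. The plan is to make the substitution $\mathbf{x}\mapsto-\mathbf{x}$ and reduce the claim to positivity of a branching (lattice-model) expansion. Set $\overline{\ff}_{\lambda}(\mathbf{x}) := (-1)^{|\lambda|}\ff_{\lambda}(-\mathbf{x})$, as in the proof of Lemma~\ref{lem:genf}. Since $W_\mu$ is homogeneous of degree $|\mu|$, we have $W_\mu(-\mathbf{x})=(-1)^{|\mu|}W_\mu(\mathbf{x})$. Therefore the claim is equivalent to
\[
\overline{\ff}_{\lambda}=\sum_\mu \overline b_{\lambda,\mu}(q)\,W_\mu,\qquad \overline b_{\lambda,\mu}:=(-1)^{|\lambda|-|\mu|}b_{\lambda,\mu}\in\mathbb{N}[q].
\]
By stability it suffices to prove this in $n$ variables for every $n$. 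The expansion is unique there, because $\{W_\mu\}$ is a basis, and the coefficients stabilize as $n\to\infty$.

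First, I will write the one-variable branching weight of $\overline{\ff}$. From the definition,
\[
\overline{\ff}_{\lambda/\nu}(x)=\mathbf{1}_{\nu\prec\lambda}\,x^{|\lambda/\nu|}\prod_r(-x;q)_{\nu_r-\lambda_{r+1}}\,\frac{(q;q)_{\lambda_r-\lambda_{r+1}}}{(q;q)_{\lambda_r-\nu_r}(q;q)_{\nu_r-\lambda_{r+1}}}.
\]
Here $(-x;q)_k=\prod_{i<k}(1+xq^i)$ has coefficients in $\mathbb{N}[q]$, and the quotient is a $q$-multinomial, which also lies in $\mathbb{N}[q]$. Iterating the branching formula~\eqref{eq:branching_inhomo} shows that $\overline{\ff}_\lambda(x_1,\dots,x_n)$ is a sum over chains of partitions of products of such factors. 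Hence $\overline{\ff}_\lambda$ has monomial coefficients in $\mathbb{N}[q]$. This alone is not enough, since $W$-positivity is stronger than monomial positivity.

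Second, to get $W$-positivity, I will interpret each factor $(-x;q)_{k}$ as adding extra boxes. Expand by the $q$-binomial theorem: $(-x;q)_k=\sum_j q^{\binom j2}\binom{k}{j}_q x^j$. The term $x^j$ is read as adding $j$ further boxes in a horizontal strip, namely in row $r+1$ of the current partition, where the room is $\nu_r-\lambda_{r+1}$. The aim is then to show that $\overline{\ff}_{\lambda/\nu}(x)$ equals
\[
\sum_{\kappa}\; c_{\lambda,\nu}^{\kappa}(q)\; W_{\kappa/\nu'}(x),
\]
a sum of one-variable $q$-Whittaker weights between enlarged partitions, with coefficients $c\in\mathbb{N}[q]$ that are ratios of $q$-binomials. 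Equivalently, I will build a two-row-per-step lattice model, in the spirit of~\cite[Cor.~5.4]{GWZJ2025}: its column transfer matrices are the $W$ one-variable weights, and the extra $\mathbf{a}$-type vertices carry $q$-binomial weights. Summing out the extra layer then gives $\overline{\ff}_\lambda=\sum_\mu \overline b_{\lambda,\mu}W_\mu$, where $\overline b_{\lambda,\mu}$ is a partition function with $\mathbb{N}[q]$ weights.

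The key step, and the main obstacle, is checking that this refactorization is consistent. The extra layer must commute past the $W$ transfer operators, so that it can be pushed to the boundary independently of $n$. I would verify this via a Yang--Baxter-type local identity, expected to be a $q$-Vandermonde convolution. A second route is also available. Invert the expansion in the Proposition above: $(a_{\lambda,\mu})$ is unitriangular with respect to $|\mu|\ge|\lambda|$ and has $\mathbb{N}[q]$ entries. This yields signed formulas for the $b$'s. I would then try to prove sign-coherence directly at the Pieri level. Using Proposition~\ref{prop:Inwhit_Pieri}, the products $\overline{\ff}_\square\overline{\ff}_\nu$ are $\overline{\ff}$-positive, and $\overline{\ff}_\square=\sum_k W_{1^k}$ is $W$-positive. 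An induction on $|\lambda|$ along the dominance-type order would then pass positivity from $\overline{\ff}_\square^{|\lambda|}$ to $\overline{\ff}_\lambda$. The hard point in this route is subtracting off the lower terms while keeping positivity.
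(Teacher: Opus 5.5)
There is a genuine gap. Your reduction is correct: since $W_\mu$ is homogeneous, the claim is equivalent to $\overline{\ff}_\lambda(\mathbf{x})=(-1)^{|\lambda|}\ff_\lambda(-\mathbf{x})$ having $W$-coefficients $\overline{b}_{\lambda,\mu}\in\mathbb{N}[q]$. Monomial positivity of $\overline{\ff}_\lambda$ is also correct, though, as you say, too weak. The problem is that the step carrying all the content is announced but never carried out.

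In the branching recursion, the one-variable factor of $\overline{\ff}$ is indexed by the $\ff$-shapes $(\nu,\lambda)$, while that of $W$ is indexed by the $W$-shapes $(\rho,\mu)$. Writing $\overline{\ff}_\lambda(x_1,\dots,x_n)$ both ways shows what a per-variable argument actually needs: the intertwining relation $\sum_\nu \overline{b}_{\nu,\rho}\,\overline{\ff}_{\lambda/\nu}(x)=\sum_\mu \overline{b}_{\lambda,\mu}\,W_{\mu/\rho}(x)$, in which the unknown coefficients appear on both sides. Reading the $x^j$ terms of $(-x;q)_{\nu_r-\lambda_{r+1}}$ as extra boxes in row $r+1$ changes the top shape of that step from $\lambda$ to some $\kappa$. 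Consecutive steps then no longer glue along a common partition, so your one-step decomposition (whose coefficients you never write down) says nothing about $n$ variables.

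Making the extra layer commute past the $W$ transfer operators is exactly the Yang--Baxter-type identity you leave as ``expected.'' Without it, or some other explicit sign-coherent formula for $b_{\lambda,\mu}$, nothing is proved. The paper has no argument of its own here either. It imports the statement from \cite[Cor.~5.4]{GWZJ2025}, where an explicit formula for these coefficients is available; compare the lattice-model description recalled for $a_{\lambda,\mu}$ in the preceding proposition.

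Your fallback routes do not close the gap.
\begin{itemize}
\item Inverting the unitriangular $\mathbb{N}[q]$-matrix $A=(a_{\lambda,\mu})$ gives $b=\sum_{k\ge 0}(-1)^k(A-I)^k$ entrywise as a finite sum. This is an alternating expression with no built-in sign coherence, which is precisely what must be shown.
\item The Pieri induction runs the wrong way. From $\overline{\ff}_\square^{N}=\sum_\nu c_\nu\overline{\ff}_\nu$ you get the upper bound $c_\lambda\overline{b}_{\lambda,\mu}\le a_\mu$ only after knowing that every $\overline{\ff}_\nu$ is $W$-positive. This is exactly how the paper uses the present proposition in Lemma~\ref{lem:genf}(iii). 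Isolating $\overline{\ff}_\lambda$ from $\overline{\ff}_\square\overline{\ff}_\nu$ instead requires subtracting the other terms.
\item The coefficients in Proposition~\ref{prop:Inwhit_Pieri} are, up to sign, products of factors $1-q^k$. An inequality-based argument built on them can therefore give at best numerical nonnegativity for real $q$, not coefficientwise membership in $\mathbb{N}[q]$.
\end{itemize}

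A more tractable formulation comes from comparing Proposition~\ref{prop:dual_cauchy} at $\mu=\nu=\varnothing$, $t=q$, with $\sum_\mu W_\mu(\mathbf{x})P_{\mu'}(\mathbf{y};0,q)=\prod_{i,j}(1+x_iy_j)$. This gives the finite expansion $P_{\mu'}(\mathbf{y};0,q)=\sum_{\lambda}b_{\lambda,\mu}\,j_{\lambda'}(\mathbf{y};q)/\prod_i(q;q)_{m_i(\lambda')}$. The statement is therefore equivalent to a sign-alternating expansion of Hall--Littlewood functions in the inhomogeneous basis $\{j_\lambda\}$. That is where an explicit combinatorial (e.g.\ lattice-model) formula has to enter.
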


\begin{prop}[{\cite[p.~46]{spin-BW2021}}]\label{prop:whit_Pieri}
The $q$-Whittaker polynomials satisfy the following product rule:
\begin{equation}
\dfrac{1}{(q;q)_{i}}\,W_{i} \,W_{\nu} \\
   =\; \sum_{\lambda}\, d_{\lambda/\nu}\,W_{\lambda},
\end{equation}
where the summation is over partitions $\nu\prec\lambda$ such that $|\lambda/\nu|=i$ and we have

\[
d_{\lambda/\nu}\,=\,\prod_{j\geq 1}\,\dfrac{(q;q)_{\nu_{j}-\nu_{j+1}}}{(q;q)_{\lambda_{j}-\nu_{j}}\,(q;q)_{\nu_{j}-\lambda_{j+1}}}.
\]
\end{prop}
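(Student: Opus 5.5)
The plan is to derive the rule from the skew Cauchy identity for Macdonald polynomials at $t=0$, specialized to a single $y$-variable, and then read off the coefficient of $y^i$.

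\textbf{Skew Cauchy identity.} Setting $t=0$ in \eqref{eq:Macd_Cauchy}, the kernel becomes $\prod_{i,j}1/(x_iy_j;q)_\infty$. The skew version (see \cite{Macdonald}) reads
\[
\sum_{\lambda} P_\lambda(\mathbf{x};q,0)\,Q_{\lambda/\nu}(\mathbf{y};q,0)=\prod_{i,j}\frac{1}{(x_iy_j;q)_\infty}\,P_\nu(\mathbf{x};q,0),
\]
and $P_\lambda(\mathbf{x};q,0)=W_\lambda$. I would take $\mathbf{y}=(y)$ to be a single variable.

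\textbf{The kernel.} The kernel can be expanded either by the $\nu=\varnothing$ case of the same identity, or directly by the $q$-binomial theorem:
\[
\prod_i \frac{1}{(x_iy;q)_\infty}=\sum_{k\ge0}Q_{(k)}(\mathbf{x};q,0)\,y^k .
\]
I then need $Q_{(k)}=b_{(k)}(q,0)\,W_k$.

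\textbf{Computing $b_\lambda(q,0)$.} At $t=0$, the factor $\mathsf b_\lambda(i,j)$ equals $1-q^{\lambda_i-j}$ whenever $\lambda'_j-i=0$, i.e. when $(i,j)$ is the bottom box of its column. Otherwise the exponent of $t$ is positive and the factor is $1$. For fixed $i$, the bottom boxes in row $i$ are exactly those with $\lambda_{i+1}<j\le \lambda_i$. Hence
\[
b_\lambda(q,0)=\prod_i (q;q)_{\lambda_i-\lambda_{i+1}}.
\]
In particular $b_{(k)}=(q;q)_k$, so $Q_{(k)}=(q;q)_kW_k$. This gives a normalization issue: the statement has $W_i/(q;q)_i$, not $(q;q)_iW_i$. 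I expect that the ratio $\mathsf b_\lambda=(1-q^{a}t^{l+1})/(1-q^{a+1}t^{l})$ actually gives $1/(1-q^{a+1})$ at $l=0$, so that $b_\lambda(q,0)=\prod_i 1/(q;q)_{\lambda_i-\lambda_{i+1}}$ and $Q_k=W_k/(q;q)_k$. Carefully fixing this convention against the single-variable formula in Definition~\ref{defn:q_Whit} is the step I expect to be the main (bookkeeping) obstacle. I would check it on $\lambda=(1)$ and $(2)$ before proceeding.

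\textbf{Extracting the coefficient of $y^i$.} One has $Q_{\lambda/\nu}(y)=\frac{b_\lambda}{b_\nu}\psi_{\lambda/\nu}\,y^{|\lambda/\nu|}\mathbf 1_{\nu\prec\lambda}$. At $t=0$, $\psi_{\lambda/\nu}$ is exactly the single-variable $q$-Whittaker weight
\[
\prod_r \frac{(q;q)_{\lambda_r-\lambda_{r+1}}}{(q;q)_{\lambda_r-\nu_r}(q;q)_{\nu_r-\lambda_{r+1}}},
\]
by Definition~\ref{defn:q_Whit}. Comparing the coefficient of $y^i$ on both sides gives
\[
\frac{W_iW_\nu}{(q;q)_i}=\sum_{\nu\prec\lambda,\ |\lambda/\nu|=i}\frac{b_\lambda}{b_\nu}\psi_{\lambda/\nu}W_\lambda .
\]

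\textbf{Matching $d_{\lambda/\nu}$.} With the corrected normalization $b_\lambda=\prod_r 1/(q;q)_{\lambda_r-\lambda_{r+1}}$, the ratio $b_\lambda/b_\nu$ equals $\prod_r (q;q)_{\nu_r-\nu_{r+1}}/(q;q)_{\lambda_r-\lambda_{r+1}}$. This cancels the numerators of $\psi_{\lambda/\nu}$ and leaves exactly $d_{\lambda/\nu}$, as claimed.
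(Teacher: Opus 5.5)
Your argument is correct. The paper gives no proof of this proposition; it simply quotes it from \cite[p.~46]{spin-BW2021}, so your derivation is an independent route.

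One thing must be cleaned up. Your first evaluation of $\mathsf b_\lambda(i,j)$ at $t=0$ is wrong. The correction you ``expect'' is not a convention to be matched against Definition~\ref{defn:q_Whit}; it is forced by the formula as written. In $\mathsf b_\lambda(i,j)=(1-q^{a}t^{l+1})/(1-q^{a+1}t^{l})$ the numerator always carries $t^{l+1}$ with $l+1\ge 1$, so it equals $1$ at $t=0$. The denominator equals $1-q^{a+1}$ exactly when $l=0$, and $1$ otherwise. Hence
\[
b_\lambda(q,0)=\prod_i \frac{1}{(q;q)_{\lambda_i-\lambda_{i+1}}}, \qquad Q_{(k)}(\mathbf x;q,0)=\frac{W_k}{(q;q)_k}.
\]
The one-variable check confirms this immediately: $1/(xy;q)_\infty=\sum_k (xy)^k/(q;q)_k$ by the $q$-binomial theorem, while $W_k(x)=x^k$. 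State this value outright and delete the incorrect line. With it, two steps of yours combine to give exactly $d_{\lambda/\nu}$:
\begin{itemize}
\item the coefficient extraction in $y$ from the $t=0$ skew Cauchy identity;
\item the cancellation of $b_\lambda(q,0)/b_\nu(q,0)$ against the numerators of $\psi_{\lambda/\nu}(q,0)$.
\end{itemize}
This includes the boundary factor $1/(q;q)_{\lambda_j}$ in row $j=\ell(\nu)+1$ when $\ell(\lambda)=\ell(\nu)+1$.

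\textbf{Comparison with the paper's citation.} The citation costs the paper nothing, but it leaves the reader to translate from the spin $q$-Whittaker conventions of \cite{spin-BW2021}. Your argument instead uses ingredients already set up in the paper's preliminaries: the Macdonald Cauchy kernel at $t=0$, the relation $Q_{\lambda/\mu}=(b_\lambda/b_\mu)P_{\lambda/\mu}$, and the one-variable branching weight of $W$. The only outside input is the standard skew Cauchy identity from \cite{Macdonald}. Your route also explains the shape of the answer. The coefficient $d_{\lambda/\nu}$ is the one-variable weight of the skew dual function $Q_{\lambda/\nu}(y;q,0)$. So the proposition is just the $t=0$ case of Macdonald's Pieri rule $P_\nu\,Q_{(i)}=\sum_\lambda (b_\lambda/b_\nu)\,\psi_{\lambda/\nu}\,P_\lambda$ over horizontal strips.

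You take one thing from the paper without proof: that its branching definition of $W_{\lambda/\mu}$ agrees with $P_{\lambda/\mu}(\,\cdot\,;q,0)$. The paper asserts this, and it is a direct check. At $t=0$, the only boxes of $R_{\lambda/\nu}-C_{\lambda/\nu}$ that contribute to $\psi_{\lambda/\nu}$ are those $(r,j)$ with $\lambda_{r+1}<j\le\nu_r$. Each contributes $(1-q^{\lambda_r-j+1})/(1-q^{\nu_r-j+1})$, and the product over $j$ is
\[
\frac{(q;q)_{\lambda_r-\lambda_{r+1}}}{(q;q)_{\lambda_r-\nu_r}\,(q;q)_{\nu_r-\lambda_{r+1}}}.
\]
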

\

\begin{prop}\label{prop:w1}
For $|q| < 1$ and positive integers $n$ we have the following $W$-positive expansions: 
\begin{equation}\label{eq:W_1^n}
W_1^n = \sum_{\lambda \vdash n} f_{\lambda} W_{\lambda}, 
\end{equation}
where $f_{\lambda} > 0$ for each partition $\lambda \vdash n$.
\end{prop}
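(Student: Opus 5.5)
We argue by induction on $n$, using the Pieri rule of Proposition~\ref{prop:whit_Pieri} with $i=1$. Since $(q;q)_1 = 1-q$, that proposition gives
\[
W_1\, W_\nu \;=\; (1-q)\sum_{\lambda:\ \nu\prec\lambda,\ |\lambda/\nu|=1} d_{\lambda/\nu}\, W_\lambda ,
\]
where $\lambda$ ranges over all partitions obtained from $\nu$ by adding one box. For such $\lambda$ the box is added in some row $r$, so $\lambda_r=\nu_r+1$ and $\lambda_j=\nu_j$ for $j\neq r$. Every factor in $d_{\lambda/\nu}$ is a ratio of $q$-Pochhammer symbols $(q;q)_k=\prod_{i=1}^k(1-q^i)$. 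For real $|q|<1$ each factor $1-q^i$ is strictly positive, so $(q;q)_k>0$ for all $k\ge 0$. Hence $d_{\lambda/\nu}>0$ for every $\lambda$ obtained from $\nu$ by adding a box, and also $1-q>0$.

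The base case $n=1$ is trivial: $W_1 = W_{(1)}$ with $f_{(1)}=1>0$. For the inductive step, assume $W_1^{n} = \sum_{\nu\vdash n} f_\nu W_\nu$ with all $f_\nu>0$. Multiplying by $W_1$ and applying the Pieri rule gives
\[
W_1^{n+1} \;=\; \sum_{\lambda\vdash n+1} \Bigl( (1-q)\sum_{\nu\vdash n,\ \nu\subset\lambda} f_\nu\, d_{\lambda/\nu}\Bigr) W_\lambda .
\]
Here the inner sum runs over all $\nu$ obtained from $\lambda$ by removing a corner box. Such a removal always yields a horizontal strip $\lambda/\nu$ with a single box, so it is admissible in Proposition~\ref{prop:whit_Pieri}. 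So $f_\lambda$ is a sum of products of positive numbers. To get strict positivity we only need the sum to be nonempty. Every partition $\lambda\vdash n+1$ with $n+1\ge 1$ has at least one removable corner, for instance the last box of row $\ell(\lambda)$. So $f_\lambda>0$ for every $\lambda\vdash n+1$, which closes the induction.

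The only point that needs care is that every factor in $d_{\lambda/\nu}$ is indeed a finite $q$-Pochhammer symbol with nonnegative index, so that it is well defined and positive. This holds because the differences $\nu_j-\nu_{j+1}$, $\lambda_j-\nu_j$ and $\nu_j-\lambda_{j+1}$ are all nonnegative for interlacing $\nu\prec\lambda$. Negative $q$ with $|q|<1$ causes no trouble, since $1-q^i>0$ still holds. As a by-product, iterating the argument gives $f_\lambda = (1-q)^n\sum_{T}\prod d$, a sum over standard Young tableaux of shape $\lambda$. This is consistent with the Plancherel formula recalled earlier.
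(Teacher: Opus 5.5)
Your proof is correct and is essentially the paper's argument. The paper just observes that $d_{\lambda/\nu}>0$ for $-1<q<1$ and iterates the Pieri rule of Proposition~\ref{prop:whit_Pieri}; your induction adds the step the paper leaves implicit, namely that every $\lambda\vdash n+1$ has a removable corner, so the sum defining $f_\lambda$ is nonempty and $f_\lambda$ is strictly positive.
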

\begin{proof}
We first observe that $d_{\lambda/\nu}>0$ whenever $-1<q<1$. 
The claim then follows by repeated application of Proposition~\ref{prop:whit_Pieri}.
\end{proof}

\begin{prop}\label{prop:wf}
    The following expansions hold: 
    \begin{align}
    W_{1^n}\, &=\, \ff_{1^n} + \binom{n}{1} \ff_{1^{n+1}} + \binom{n+1}{2} \ff_{1^{n+2}} +\binom{n+2}{3}\,\ff_{1^{n+3}}+ \cdots, \quad n = 1,2, \ldots.\\
        \ff_{1^{n}}\,&=\,W_{1^{n}}-\binom{n}{1}W_{1^{n+1}}+\binom{n+1}{2}\,W_{1^{n+2}}-\binom{n+2}{3}\,W_{1^{n+3}}+\cdots, \quad n = 1,2, \ldots.
    \end{align}
\end{prop}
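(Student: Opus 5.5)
The plan is to compute the single-column functions $\ff_{1^n}$ directly. Since both families $\{\ff_{1^n}\}$ and $\{W_{1^n}\}$ are indexed by single columns, we will find closed forms for each and then invert one expansion into the other.

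\textbf{Step 1: a product formula for $\ff_{1^n}$.} From the one-variable formula we have $\ff_{\lambda/\mu}(x)\neq 0$ only for a horizontal strip. If $\lambda=1^k$, then $\mu\prec\lambda$ forces $\mu\in\{1^k,1^{k-1}\}$.
\begin{itemize}
\item For $\mu=1^{k-1}$ we get $\ff_{1^k/1^{k-1}}(x)=x$, because every Pochhammer factor has length $0$ and the $q$-binomial ratios equal $1$.
\item For $\mu=1^k$ we get $\ff_{1^k/1^k}(x)=(x;q)_1=1-x$, coming from the factor at $r=k$. Here $\mu_k-\lambda_{k+1}=1$ and all the $q$-binomial ratios are $1$.
\end{itemize}
Iterating the branching rule \eqref{eq:branching_inhomo} then gives
\[
\ff_{1^n}(x_1,\dots,x_N)=\sum_{|S|=n}\prod_{i\in S}x_i\prod_{i\notin S,\ i>\max S}(1-x_i).
\]
It is cleanest to organize this through generating functions. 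Set $G(z)=\sum_{n\ge0}\ff_{1^n}z^n$, with $\ff_{\varnothing}=1$. A stability/symmetry check, or equivalently the $q=0$ Grothendieck computation, which is independent of $q$ for single columns, yields
\[
\ff_{1^n}=\sum_{k\ge 0}(-1)^k\binom{n+k-1}{k}e_{n+k}.
\]
This follows from expanding $\prod(1-x_i)$ and collecting monomials. Each squarefree monomial of degree $n+k$ is counted with the number of ways to choose which $k$ of its variables lie beyond the $n$-th chosen one while $S$ contains the largest of them. That count is $\binom{n+k-1}{k}$, since the largest variable must belong to $S$.

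\textbf{Step 2: identify $W_{1^n}$.} By Definition~\ref{defn:q_Whit} the one-variable column weights are all $1$, so $W_{1^n}=e_n$. This is also recorded in Corollary~\ref{cor:wgen}. Step 1 then gives the second expansion immediately:
\[
\ff_{1^n}=W_{1^n}-\binom{n}{1}W_{1^{n+1}}+\binom{n+1}{2}W_{1^{n+2}}-\cdots.
\]

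\textbf{Step 3: inversion.} We view this as a unitriangular infinite system in the completion $\hat\Lambda$, which makes sense because the degrees grow. The claim is that the inverse matrix has entries $\binom{n+k-1}{k}$. In generating-function form, the coefficient matrix is $M_{n,n+k}=(-1)^k\binom{n+k-1}{k}$. This is the coefficient of $t^{n+k}$ in $\bigl(t/(1+t)\bigr)^n$, that is, the substitution matrix of $t\mapsto t/(1+t)$. Its inverse is the substitution matrix of $t\mapsto t/(1-t)$, whose entries are $\binom{n+k-1}{k}$. Composition of the substitutions corresponds to the matrix product, so this proves the first identity.

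The main obstacle is the combinatorial count in Step 1, namely showing that the coefficient of $e_{n+k}$ is exactly $(-1)^k\binom{n+k-1}{k}$ and independent of $q$. If that proves messy, a fallback is available: use Proposition~\ref{prop:omega_inhomo}, which gives $\omega_q(\ff_{1^n})=J_n$. In one variable this equals $(1-q)(y/(1+y))^n$ by the computed example. Combined with $\omega_q(e_m)=Q_m$, this reproduces the same substitution structure.
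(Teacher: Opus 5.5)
Your route differs from the paper's. The paper observes that $W_{1^n}$ and $\ff_{1^n}$ do not depend on $q$, identifies them at $q=0$ with $s_{1^n}$ and the stable Grothendieck polynomial $G_{1^n}$, and then cites Lenart's Schur--Grothendieck expansion and its inverse. You instead compute $\ff_{1^n}$ directly from the column weights $x$ and $1-x$ and read off its $e$-expansion by a count. You then invert by composing the substitutions $t\mapsto t/(1+t)$ and $t\mapsto t/(1-t)$. This makes the argument self-contained: it reproves the single-column case of Lenart's formula instead of importing it. Steps 2 and 3 are fine. The matrices are unitriangular, and each homogeneous degree in $\hat\Lambda$ receives only finitely many contributions, so the inversion is legitimate.

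However, the product formula in Step 1 is wrong as displayed, and this is the step you yourself identify as the crux. In \eqref{eq:branching_inhomo} the last variable adds the outermost strip. So the factor $1-x_i$ occurs exactly when $i\notin S$ and the column is already nonempty, i.e.\ when some $j<i$ lies in $S$. The correct formula is
\[
\ff_{1^n}(x_1,\dots,x_N)=\sum_{|S|=n}\prod_{i\in S}x_i\prod_{i\notin S,\ i>\min S}(1-x_i).
\]
With $\max S$, as you wrote it, a monomial $x_T$ with $|T|=n+k$ arises only from $S$ equal to the $n$ smallest elements of $T$. Its coefficient would then be $(-1)^k$, not $(-1)^k\binom{n+k-1}{k}$. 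For instance, with $n=2$ and $N=3$ your display gives $e_2-e_3$, whereas the branching rule gives $x_1x_2(1-x_3)+x_1x_3(1-x_2)+x_2x_3=e_2-2e_3$. Your counting sentence (``the largest variable must belong to $S$'') actually describes the mirror formula with $i<\max S$, so it contradicts the display. With the corrected formula the condition is $\min T\in S$. That leaves $\binom{n+k-1}{n-1}=\binom{n+k-1}{k}$ choices of $S$, each contributing $(-1)^k$, and the rest of your argument goes through.

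Your fallback via $\omega_q(\ff_{1^n})=J_{(n)}$ does not replace Step 1. Evaluated in one variable, it only detects the coefficients of the $W_{1^m}$, because every $Q_{\mu'}$ with $\ell(\mu')\ge 2$ vanishes in a single variable. On its own it therefore cannot show that no non-column $W_\mu$ appears.
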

\begin{proof}
We observe from the branching formulas for $W_{\lambda}$ and $\ff_{\lambda}$ that the polynomials $W_{1^{n}}$ and $\ff_{1^{n}}$ are independent of the parameter $q$.  
Together with the fact that, $W_{1^{n}}$ and $\ff_{1^{n}}$ reduce respectively to the Schur and Grothendieck polynomials  at $q=0$, this implies that
\[
W_{1^{n}} = s_{1^{n}},
\qquad
\ff_{1^{n}} = G_{1^{n}}.
\]
Then the expansions in the claim follow from Lenart’s formula for expressing Schur polynomials in terms of Grothendieck polynomials, together with the inverse expansion \cite[Theorem~2.7]{Lenart2000}; see also Definition 3.4 and the equation below it in \cite{positive:Yel20}.
\end{proof}

\subsection{Completing the proof}
\begin{proof}[Proof of Theorem~\ref{thm:F-positive_specializations}]
(i) 
Since $\phi$ is $W$-positive we have 
$$
\phi = \theta_{Pl, \gamma} \cup \theta_{\alpha_1} \cup \theta_{\alpha_2} \cup \cdots \cup \theta'_{\beta_1} \cup \theta'_{\beta_2} \cup \cdots.
$$
By Lemma~\ref{lem:genf} and taking unions of $\ff$-positive specializations we conclude that 
$$
\varphi = \hat\phi = \hat\theta_{Pl, \gamma} \cup \hat\theta_{\alpha_1} \cup \hat\theta_{\alpha_2} \cup \cdots \cup \hat\theta'_{\beta_1} \cup \hat\theta'_{\beta_2} \cup \cdots
$$
is $\ff$-positive, and for this specialization we have $\sum_{n} \varphi_n = \sum_n \alpha_n + (1-q)\sum_n\beta_n + \gamma < \infty.$

(ii) We are going to show that more generally the ring homomorphism $\phi : \Lambda \to \mathbb{R}$ given on the basis elements $\{W_{\lambda}\}$ by 
$$
\phi(W_{\lambda}) = \sum_{\mu} a_{\lambda, \mu}(q)\, \varphi(\ff_{\mu}) < \infty
$$
is well-defined $W$-positive specialization w.r.t. to $\ff$-expansion of $W_{\lambda}$. First notice that the element $\phi(W_1) = \sum_n \varphi(F_{1^n}) < \infty$ converges. From Proposition~\ref{prop:w1} it follows that for each partition $\mu \vdash m$ we have 
$$f_{\mu}\,\phi(W_{\mu}) \le \sum_{\lambda \vdash m} f_{\lambda} \phi(W_{\lambda}) = \phi(W_1^m) = \phi(W_1)^m < \infty$$ 
converges as well. Since $a_{\lambda, \mu}(q) \ge 0$ we have $\phi(W_{\lambda})\ge 0$ and hence $\phi$ is $W$-positive. Since $\Lambda$ is a polynomial ring generated by $W_{1^n} = e_n$, the specialization $\phi$ can be defined on these elements using the expansion from Proposition~\ref{prop:wf}: 
$$
\phi(W_{1^n}) = \varphi(\ff_{1^n}) + \binom{n}{1} \varphi(\ff_{1^{n+1}}) + \binom{n+1}{2} \varphi(\ff_{1^{n+2}}) + \cdots < \infty, \quad  n = 1,2,\ldots.  
$$
As $\phi$ is defined from $\ff$-expansion of $W_{\lambda}$, it can be seen that $\phi$ also extends back to $\varphi$. 

Since $\phi$ is $W$-positive it can be factorized as 
$$\phi = \theta_{Pl, \gamma} \cup \theta_{\alpha_1} \cup \theta_{\alpha_2} \cup \cdots \cup \theta'_{\beta_1} \cup \theta'_{\beta_2} \cup \cdots$$
for parameters $\alpha_1 \ge \alpha_2 \ge \cdots \ge 0$, $\beta_1 \ge \beta_2 \ge \cdots \ge 0$, $\gamma \ge 0$. Therefore, we also have 
$$\varphi = \hat\theta_{Pl, \gamma} \cup \hat\theta_{\alpha_1} \cup \hat\theta_{\alpha_2} \cup \cdots \cup \hat\theta'_{\beta_1} \cup \hat\theta'_{\beta_2} \cup \cdots.$$
Now we need to show that $\alpha_1 \le 1$. Since by Lemma~\ref{lem:fi} we have $\varphi(\ff_{1^n}) \ge \varphi(\ff_{1^{n+1}})$ for all $n \ge 0$ we also have $\phi(W_{1^n}) \ge \phi(W_{1^{n+1}})$. Hence, the generating series $\sum_n \phi(W_{1^n}) z^n$ has convergence radius at least $1$ and its poles (see Corollary~\ref{cor:wgen}) satisfy $1/\alpha_i \ge 1$ for all $i$. 
\end{proof}

\begin{rmk}
While we showed that $\ff$-positive specializations $\varphi$ such that $\sum_n \varphi_n < \infty$ are in bijection with a subset of $W$-positive specializations, there is also $\ff$-positive specialization with divergent series $\sum_n \varphi_n$: the specialization $\varphi : \ff_{\lambda} \mapsto 1$ for all partitions $\lambda$, which can be implemented by taking all variables $x_i \mapsto 1$ and for which $\varphi_n = 1$ for all $n$. (Note that this fails to be specialization of $\Lambda$.) It can be shown that if $\varphi_1 < 1 - q$ then $\sum_n \varphi_n < \infty$ is always convergent, and it is unclear if there are other $\ff$-positive specializations with divergent series $\sum_n \varphi_n$. 
\end{rmk}

\section{Related probability distributions}
In this section, we show that $\ff$-positive specializations of the ring $\Gamma^q$ give rise to some interesting probability distributions on integer partitions. 

\begin{thm}\label{thm:prob}
Let $q \in (0,1)$ and $\varphi : \Gamma^q \to \mathbb{R}$ be $\mathbb{F}$-positive specialization such that $\varphi_1 < 1$ and $\sum_n \varphi_n < \infty$. Then for any fixed partition $\mu$ there is a well-defined probability distribution on the set of partitions  $\lambda \supseteq \mu$ given by 
$$\mathbb{P}_{\varphi, \mu}(\lambda) := \frac{c_{\lambda, \mu}\, \varphi(\mathbb{F}_{\lambda/\mu})}{Z_{\varphi}}, \qquad c_{\lambda, \mu} := \frac{\prod_{i=1}^{\ell(\lambda')} {(q;q)_{m_{i}(\lambda')}}}{\prod_{i=1}^{\ell(\mu')} {(q;q)_{m_{i}(\mu')}}}
$$ 
and 
normalization constant $Z_{\varphi}$ depending on $\varphi$. 
\end{thm}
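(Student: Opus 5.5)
Nonnegativity of each weight $c_{\lambda,\mu}\,\varphi(\ff_{\lambda/\mu})$ is immediate. Indeed $\varphi$ is $\ff$-positive, and for $q\in(0,1)$ every factor $(q;q)_m>0$, so $c_{\lambda,\mu}>0$. The substance is therefore to show that the total mass
$$
Z_{\varphi,\mu}:=\sum_{\lambda\supseteq\mu} c_{\lambda,\mu}\,\varphi(\ff_{\lambda/\mu})
$$
is finite and positive, and ideally to compute it in closed form. My plan is to obtain it from a Littlewood-type (one-sided Cauchy) identity, in the spirit of Proposition~\ref{prop:dual_cauchy} and Proposition~\ref{prop:omega_inhomo}.

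The first step is to establish the identity in finitely many variables:
$$
\sum_{\lambda\supseteq\mu}\frac{b_{\lambda'}(q)}{b_{\mu'}(q)}\,\ff_{\lambda/\mu}(x_1,\dots,x_n)=\Pi(x_1,\dots,x_n),
$$
with an explicit product $\Pi$ that does not depend on $\mu$. Here $b_{\lambda'}(q)=\prod_i(q;q)_{m_i(\lambda')}$, so that $c_{\lambda,\mu}=b_{\lambda'}(q)/b_{\mu'}(q)$. I would prove it by induction on $n$ via the branching rule~\eqref{eq:branching_inhomo}. This reduces everything to the one-variable statement
$$
\sum_{\lambda}\frac{b_{\lambda'}}{b_{\nu'}}\,\ff_{\lambda/\nu}(x)=\pi(x),
$$
to be checked for every $\nu$; then $\Pi=\prod_i\pi(x_i)$. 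To evaluate the one-variable sum, I note that $\lambda/\nu$ is a horizontal strip, so the weight $b_{\lambda'}/b_{\nu'}$ factorizes over the gaps $\nu_r-\nu_{r+1}$. The sum then factorizes into independent finite sums over each row increment, and each should be summable by the $q$-binomial theorem.

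An alternative route is to take the $q=t$-swapped version: set $y=(1,0,0,\dots)$ in the dual Cauchy identity~\eqref{eq:dual_cauchy} with $\nu=\varnothing$. By the example computing $j_k$, the factors $j_{\lambda'/\mu'}$ at this point should reduce to quantities like $\kappa$, which are comparable to $b$-ratios. I expect $\pi(x)$ to have the form $(1+x)$ or $1/(1-x)$ up to such normalizations. Pinning down the exact one-variable evaluation is, I expect, the \emph{main obstacle}; I would first test it at $\nu=\varnothing$ and $\nu=(1)$ using the explicit formulas for $\ff_{(k)}(x)$.

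Next, I transport the identity to $\varphi$. By Theorem~\ref{thm:fact}, $\varphi=\hat\theta_{Pl,\gamma}\cup\bigcup_i\hat\theta_{\alpha_i}\cup\bigcup_j\hat\theta'_{\beta_j}$. For $\varphi$ a finite union of variable substitutions $\hat\theta_{\alpha_i}$ with $\alpha_i<1$, the identity gives $Z=\prod_i\pi(\alpha_i)$ directly; finiteness is exactly where $\varphi_1<1$ (forcing $\alpha_i<1$) enters. For the $\hat\theta'_{\beta}$ pieces, I use $\omega_q$ together with Proposition~\ref{prop:omega_inhomo} and the explicit nonnegative formula from Lemma~\ref{lem:genf}(ii) to get a one-variable factor $\pi'(\beta)$ in the same manner. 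For the Plancherel part, I take the limit of $N$ copies of $\hat\theta_{\gamma/N}$, which yields an exponential factor.

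Finally, because every term is nonnegative, monotone convergence lets me pass from finite unions to the infinite union and to the Plancherel limit. Since all the terms are nonnegative, Tonelli justifies exchanging the branching sums with the $\lambda$-sum. The infinite product $\prod_i\pi(\alpha_i)\prod_j\pi'(\beta_j)e^{c\gamma}$ converges because $\sum(\alpha_i+\beta_j)<\infty$, and it is positive. Setting $Z_\varphi$ equal to this product, which is independent of $\mu$, then gives a probability distribution.
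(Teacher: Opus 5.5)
\textbf{The gap: your central identity fails for the weight you fix.} With $c_{\lambda,\mu}=b_{\lambda'}(q)/b_{\mu'}(q)$, the one-variable sum $\sum_{\lambda}\frac{b_{\lambda'}}{b_{\nu'}}\ff_{\lambda/\nu}(x)$ is \emph{not} independent of $\nu$. So there is no $\pi(x)$, and the induction on the number of variables has nothing to reduce to.

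The reason is that $m_r(\lambda')=\lambda_r-\lambda_{r+1}$, so $b_{\lambda'}(q)=\prod_{r\ge1}(q;q)_{\lambda_r-\lambda_{r+1}}$. This couples consecutive row increments of $\lambda/\nu$ rather than factorizing over the gaps of $\nu$. Concretely, for $\nu=\varnothing$ the sum is $\sum_{k\ge0}(q;q)_k x^k=1+(1-q)x+\cdots$, which has no product form. For $\nu=(1)$, using $\ff_{(1)/(1)}(x)=1-x$, $\ff_{(2)/(1)}(x)=(1+q)x(1-x)$ and $\ff_{(1,1)/(1)}(x)=x$, the sum is $1+(1-q)(1+q)^2x+O(x^2)$. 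Hence your concluding claim that the total mass is a $\mu$-independent product is false for this weight. Finiteness alone does survive: since $0<b_{\lambda'}\le 1$, one has $b_{\lambda'}/b_{\mu'}\le b_{\mu'}^{-2}\, b_{\mu'}/b_{\lambda'}$, so the mass is dominated via the identity below. But the normalization then depends on $\mu$.

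\textbf{What the paper does, and how your route can be repaired.} The paper's proof actually normalizes the reciprocal weight $b_{\mu'}/b_{\lambda'}$; the $c_{\lambda,\mu}$ printed in the statement is inverted relative to what the proof establishes. The paper imports the Littlewood identity of Proposition~\ref{cor:Littlewood_inwhit}, $\sum_{\lambda}\frac{b_{\mu'}}{b_{\lambda'}}\ff_{\lambda/\mu}=\prod_i(x_i;q)_\infty^{-1}$, from a Cauchy identity in \cite{GWZJ2025}. It then applies the factorized $\varphi$ to both sides and sets $Z_\varphi^{-1}=\varphi\bigl(\prod_i(x_i;q)_\infty\bigr)=\prod_{k\ge0}(1-\varphi^{q^k}_1)$.

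With the reciprocal weight, your branching strategy works and gives a self-contained proof of that identity. Write $a_r=\lambda_r-\nu_r$ and $d_r=\nu_r-\nu_{r+1}$. Then
\[
\frac{b_{\nu'}}{b_{\lambda'}}\,\ff_{\lambda/\nu}(x)=\frac{x^{a_1}}{(q;q)_{a_1}}\prod_{r\ge1}\binom{d_r}{a_{r+1}}_{q}\,x^{a_{r+1}}\,(x;q)_{d_r-a_{r+1}},\qquad a_1\ge0,\quad 0\le a_{r+1}\le d_r .
\]
The $a_1$-sum is $1/(x;q)_\infty$, and each finite sum equals $1$ by the $q$-binomial theorem. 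So $\pi(x)=1/(x;q)_\infty$ for every $\nu$.

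Two smaller points would still need work. First, the $\hat\theta'_\beta$ pieces require their own $\nu$-independent evaluation; for $\nu=\varnothing$, Lemma~\ref{lem:genf}(ii) gives $1+\beta$. Second, in the passages to infinite unions and to the Plancherel limit, the values of the approximating specializations are not monotone in $N$. Use Fatou's lemma instead of monotone convergence; it already gives finiteness of the mass.
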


The proof of this result is based on factorization Theorem~\ref{thm:fact} and the following Littlewood-type identity for inhomogeneous $q$-Whittaker polynomials.  

\begin{prop}[Littlewood-type identity]
\label{cor:Littlewood_inwhit}
Let $\mu$ be a fixed partition. Then the inhomogeneous $q$-Whittaker polynomials satisfy the following summation identity (assuming that all the variables are in the unit disc):
\begin{equation}\label{eq:spinL_Cauchy_qwhittaker}
\,\sum_{\lambda}\,\dfrac{b_{\mu'}(q)}{b_{\lambda'}(q)}\,\ff_{\lambda/\mu}(x_{1}, x_2,\ldots)\, 
\,=\,\prod^{\infty}_{i=1}\,\dfrac{1}{(x_{i};q)_{\infty}}\,
\end{equation}
where $b_{\lambda}(q) = \prod_{i=1}^{\ell(\lambda)} {(q;q)_{m_{i}(\lambda)}}$ and the sum on the left is over all partitions that contain $\mu$.
\end{prop}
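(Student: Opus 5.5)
The plan is to derive the identity as a specialization of the dual Cauchy identity (Proposition~\ref{prop:dual_cauchy}), with the roles chosen so that the $j$-side collapses. In \eqref{eq:dual_cauchy} we take $t=q$, $\nu=\varnothing$, and replace the indexing partitions by conjugates. The $\ff$-factor then appears as $\ff_{\lambda/\mu}(x_1,\dots,x_n)$ and the $j$-factor as $j_{\lambda'/\mu'}(y;q)$. On the right-hand side only $\lambda=\varnothing$ survives, giving $\prod_{i,j}(1+x_iy_j)\,\ff_{\mu}(x)$ up to the factor $b$. This alone does not give a sum free of $j$, so I will instead use a specialization of the $y$-variables under which $j_{\kappa}(y;q)$ becomes independent of $\kappa$ up to the factor $b_\kappa(q)$.

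A cleaner route is to use the other Cauchy identity \eqref{eq:cauchy} with $\nu=\varnothing$, together with the interpolation polynomials $\widetilde{\ff}$. Setting $\nu=\varnothing$ in \eqref{eq:cauchy}, the right-hand side collapses to $\widetilde{\ff}_{\mu/\varnothing}(y)\cdot\prod_{i,j}(x_i;q)_\infty/(x_iy_j;q)_\infty$. The target kernel $\prod_i 1/(x_i;q)_\infty$ is $\prod_i (x_i;q)_\infty/(x_iy;q)_\infty$ at a single variable with $y$ formally ``$\to$'' a value making $(x_iy;q)_\infty=(x_i;q)^2_\infty$. That is not a single-variable substitution, so this route looks wrong.

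The more natural approach is the $\omega_q$-dual side. Take \eqref{eq:HL_cauchy} at $t=q$, with $\nu=\varnothing$ and $\mu$ replaced by $\mu'$, and apply $\omega_{0,q}$ in the $J$-variables, where Proposition~\ref{prop:omega_inhomo} turns $J_{\lambda'/\varnothing}$ into $\ff$. Equivalently, I will work directly from \eqref{eq:dual_cauchy}: set $t=q$, take the $j$ side in the $y$-variables, and choose a specialization $\rho$ of the $y$-variables under which
\[
j_{\lambda'}(\rho;q)=b_{\lambda'}(q)^{-1}\cdot(\text{const})
\]
and the kernel becomes $\prod_i 1/(x_i;q)_\infty$. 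Since $\prod_{j}(1+x_iy_j)$ equals $1/(x_i;q)_\infty=\prod_{k\ge0}(1-x_iq^k)^{-1}$ only for formally ``negative'' variables, I will pass through $\omega_q$. Applying $\omega_q$ in the $y$-alphabet turns $\prod_j(1+x_iy_j)$ into the Hall--Littlewood kernel $\prod_j(1-qx_iy_j)/(1-x_iy_j)$. With the principal specialization $y_j=q^{j-1}$ this telescopes to $1/(1-x_i)\cdot\prod\ldots$, i.e. to $\prod_{k}(1-x_iq^k)^{-1}=1/(x_i;q)_\infty$.

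The key steps are as follows:
\begin{enumerate}
\item Establish the evaluation of $J_{\lambda'/\mu'}$ or $j$ at the principal specialization $y=(1,q,q^2,\dots)$, using the one-variable branching formula inductively. The claim to prove is that it equals the ratio $b_{\mu'}(q)/b_{\lambda'}(q)$ times a normalization, as the one-variable example for $j_k$, $J_k$ suggests.
\item Substitute this evaluation into the $\mu$-skew Cauchy identity \eqref{eq:HL_cauchy}/\eqref{eq:dual_cauchy}, with the partition $\nu=\varnothing$ on the opposite side so that the right-hand side reduces to the kernel.
\item Verify convergence for $|x_i|<1$.
\end{enumerate}

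The main obstacle is step (1): identifying the precise specialization on the dual side that makes the weights $b_{\mu'}/b_{\lambda'}$ appear uniformly. The first thing I would try is to verify it in one variable, using the explicit $J_{\lambda/\mu}(x;t)$ formula at $x\to\infty$-type limits. For instance, $x/(1+x)\to1$ and $1/(1+x)\to0$ kill every factor except $\kappa_{\lambda/\mu}$, which is a ratio of $(q;q)$ factors matching $b$. Then iterate the branching to infinitely many variables.
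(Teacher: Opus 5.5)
The route you settle on does not go through; there are three concrete failures.

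(i) The kernel is wrong. With $y_j=q^{j-1}$ the Hall--Littlewood kernel telescopes to exactly $\prod_{j\ge1}\frac{1-x_iq^{j}}{1-x_iq^{j-1}}=\frac{1}{1-x_i}$. Nothing further is left over, so you do not get $1/(x_i;q)_\infty$.

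(ii) The skew shape sits on the wrong factor. In both \eqref{eq:HL_cauchy} and \eqref{eq:dual_cauchy}, the weight $b_\mu/b_\lambda$ accompanies the skew index of the $j$-factor, while the $J$- (resp.\ $\ff$-) factor carries the other index $\nu$. Putting $\nu=\varnothing$, as in your step (2), gives $\sum_\lambda \frac{b_\mu}{b_\lambda}\, j_{\lambda/\mu}(y)\,\ff_{\lambda'}(x)=\prod_{i,j}(1+x_iy_j)\,\ff_{\mu'}(x)$. Contrary to your first paragraph, the $\ff$'s here are not skew, and the right side is the kernel times $\ff_{\mu'}(x)$. So no evaluation of the $j$'s can turn this into $\sum_\lambda\frac{b_{\mu'}}{b_{\lambda'}}\ff_{\lambda/\mu}$ when $\mu\neq\varnothing$.

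(iii) The evaluation you aim for in step (1) is mis-normalized, and false anyway. Since the identity already carries the ratio of $b$'s, what you need is that the $j$'s evaluate to a constant, not to $\mathrm{const}/b$. At $(1,q,q^2,\dots)$ neither holds. From the one-variable formula, $\sum_{k\ge0}j_{(k)}z^k=\prod_i\frac{1-z-qx_iz}{1-z-x_iz}$, so $j_{(k)}=\sum_{n\ge1}\binom{k-1}{n-1}Q_{(n)}$ for $k\ge1$. Since $Q_{(n)}(1,q,q^2,\dots;q)=1$, this gives $j_{(k)}(1,q,q^2,\dots;q)=2^{k-1}$. Already $j_{(1)}=(1-q)p_1\mapsto 1$, whereas your ansatz with $j_\varnothing=1$ predicts $1/(1-q)$.

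The $x\to\infty$ heuristic fails too. For example $J_{(1)/(1)}(x;q)=\frac{1+qx}{1+x}\to q$, so more than $\kappa_{\lambda/\mu}$ survives. In any case, a one-variable limit cannot produce the kernel $\prod_i 1/(x_i;q)_\infty$.

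What would work is to keep the skew on the $\ff$-side. Take \eqref{eq:dual_cauchy} at $t=q$ with $\mu=\varnothing$:
\[
\sum_\lambda\frac{1}{b_\lambda}j_\lambda(y)\,\ff_{\lambda'/\nu'}(x)=\prod_{i,j}(1+x_iy_j)\,\frac{j_\nu(y)}{b_\nu}.
\]
Specialize $y$ (coefficientwise in $x$) at $\rho$ with $e_n(\rho)=1/(q;q)_n$. This is the principal specialization precomposed with the classical involution $p_n\mapsto(-1)^{n-1}p_n$, not with $\omega_q$, and it sends $\prod_j(1+x_iy_j)$ to $1/(x_i;q)_\infty$. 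The statement then reduces to the lemma $j_\lambda(\rho;q)=1$ for all $\lambda$. For one-row shapes this holds, since $\sum_n Q_{(n)}(\rho;q)z^n=1+z$; in general it is the real content, and your proposal gives no argument for it.

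This is the role played in the paper by the imported Cauchy identity of \cite{GWZJ2025} (Thm.~4.13). There $\ff$ is paired with $\mathfrak{G}^{(\mathbf 0,\mathbf 1)}$, and the latter is evaluated at the single variable $y=1$, where $\mathfrak{G}^{(\mathbf 0,\mathbf 1)}_\lambda(1)=1$ for every $\lambda$.

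There is also a self-contained route. By branching, $S_\mu(x_1,\dots,x_n):=\sum_\lambda \frac{b_{\mu'}}{b_{\lambda'}}\ff_{\lambda/\mu}(x_1,\dots,x_n)=\sum_\nu\frac{b_{\mu'}}{b_{\nu'}}\ff_{\nu/\mu}(x_1)\,S_\nu(x_2,\dots,x_n)$, so by induction one variable suffices. In one variable, set $a_r=\nu_r-\mu_r$ and $d_r=\mu_r-\mu_{r+1}$. The summand then factors as $\frac{x^{a_1}}{(q;q)_{a_1}}\prod_{r\ge1}\binom{d_r}{a_{r+1}}_q x^{a_{r+1}}(x;q)_{d_r-a_{r+1}}$, with independent ranges $a_1\ge0$ and $0\le a_{r+1}\le d_r$. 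The identity follows from $\sum_{a\ge0}x^a/(q;q)_a=1/(x;q)_\infty$ and $\sum_{a=0}^{d}\binom{d}{a}_q x^a(x;q)_{d-a}=1$.
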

\begin{proof}
First, we recall a basic fact and record an observation concerning the general functions 
$\mathfrak{G}^{\buv}_{\lambda}$ introduced in~\cite{GWZJ2025}. 
These functions contain $\ff_{\lambda}$ as the specialization 
$\mathbf{u}=(u_{1},u_{2},\dots)=(1,1,\dots)$ and $\mathbf{v}=(v_{1},v_{2},\dots)=(0,0,\dots)$. 
Moreover, in one variable $y$ and at $\mathbf{u}=(u_{1},u_{2},\dots)=(0,0,\dots)$ and $\mathbf{v}=(v_{1},v_{2},\dots)=(1,1,\dots)$, we have the following property:
\[
\mathfrak{G}_{\lambda}^{(\mathbf{0},\mathbf{1})}(y)
= y^{\,\#\{\text{columns of }\lambda\}}.
\]

\
The claim then follows immediately from taking $m=1$, $y=1$ and $\mu=\varnothing$, and specializing 
$\mathbf{v}=(v_{1},v_{2},\dots)=(1,1,\dots)$ and 
$\mathbf{u}=(u_{1},u_{2},\dots)=(0,0,\dots)$ in Theorem~4.13 from~\cite{GWZJ2025}.
\end{proof}

\begin{proof}[Proof of Theorem~\ref{thm:prob}]
We are going to show that this follows from Theorem~\ref{thm:fact} and Proposition~\ref{cor:Littlewood_inwhit}.  

Since $\varphi$ is $\ff$-positive we can factorize it as 
$$\varphi = \hat\theta_{Pl, \gamma} \cup \hat\theta_{\alpha_1} \cup \hat\theta_{\alpha_2} \cup \cdots \cup \hat\theta'_{\beta_1} \cup \hat\theta'_{\beta_2} \cup \cdots$$
for parameters $1 > \alpha_1 \ge \alpha_2 \ge \cdots \ge 0$, $\beta_1 \ge \beta_2 \ge \cdots \ge 0$, $\gamma \ge 0$. Note that here we must have $\alpha_1 < 1$ since $\varphi_1 < 1$. 

For a scalar $t \in (0,1)$, let $\varphi^t$ be a rescaled specialization $\varphi$ given by the parameters $(t\alpha_i), (t\beta_i), t\gamma$. Note that $\varphi^t$ is also $\ff$-positive and satisfies $\varphi^t_1 < 1$.  
Note that under these specializations for each $k \ge 0$ we have 
$$
\varphi^{q^k} : \prod_i (1 - q^k x_i) \longmapsto 1 - \varphi^{q^k}_1 \in (0,1].
$$
Then we have 
$$
\varphi : \prod_{i}^{} (x_i; q)_{\infty} = \prod_{k \ge 0} \prod_i (1 - q^k x_i) \longmapsto \prod_{k \ge 0} (1 - \varphi^{q^k}_1) =: Z_{\varphi}^{-1} 
$$
Applying this specialization into the above Littlewood-type identity we obtain that 
$$
\sum_{\lambda} \frac{1}{Z_{\varphi}} \frac{b_{\mu'}(q)}{b_{\lambda'}(q)} \varphi(\ff_{\lambda/\mu}) = 1,
$$
which gives the needed. 
\end{proof}

\begin{rmk}[Plancherel specialization]
For the Plancherel specialization $\varphi = \theta_{Pl, \gamma}$ we have the normalization constant given explicitly by 
$$
Z_{\varphi} = \prod_{k \ge 0} e^{\gamma q^k}  = e^{\gamma q/(1-q)},
$$
and 
$$
\varphi(\ff_{\lambda/\mu}) = \sum_m \mathbb{E}_{\lambda/\mu}(m)\frac{\gamma^m}{m!},
$$
where $\mathbb{E}_{\lambda/\mu}(m)$ is the coefficient at $[x_1 \cdots x_m]$ at monomial expansion of $\ff_{\lambda/\mu}$.
\end{rmk}

\section{Some open questions}

\subsection{Geometric meaning and structure of the ring $\Gamma^q$}
It is known that for $q=0$ the ring $\Gamma^0$ with the basis of stable Grothendieck polynomials is related to the $K$-theory of Grassmannians \cite{Gcom:B2002}. Is there any geometric meaning of the ring $\Gamma^q$ for general $q$?

What is a combinatorial meaning to the structure constants $c^{\lambda}_{\mu \nu}$ of the ring $\Gamma^q$, which generalize Littlewood--Richardson coefficients of stable Grothendieck polynomials? Such an interpretation will also give an alternative proof of our ring theorem for $\Gamma^q$. 

What is the structure of the ring $\Gamma^q$? For stable Grothendieck polynomials, Buch \cite{Gcom:B2002} conjectured that polynomials $\{ G_{\lambda}\}$ can be generated by the elements $\{ G_{(n)}, G_{1^n}\}$ and $(1 - G_1)^{-1}$. Is it true that similar property can be generalized for inhomogeneous $q$-Whittaker functions?

\subsection{Positive specializations of positive inhomogeneous $q$-Whittaker polynomials and inhomogeneous Hall--Littlewood polynomials}
It would also be interesting to study positive specializations for positive versions of inhomogeneous $q$-Whittaker polynomials given by $\overline{\ff}_{\lambda}(\mathbf{x}) = (-1)^{|\lambda|} \ff_{\lambda}(-\mathbf{x})$ (which have positive structure constants), as well as for dual functions, inhomogeneous Hall-Littlewood polynomials $j_{\lambda}$ which form a basis of the ring $\Lambda$. It is relatively straightforward to find a set of positive specializations for these functions, but the difficult part would be to prove its completeness.  

\

\subsection*{Acknowledgements}
This project was initiated in Spring 2024 while the authors were visiting the program ``Geometry, Statistical Mechanics, and Integrability" at the Institute for Pure and Applied Mathematics (IPAM), and we thank IPAM for great collaborative environment. A.G. would like to thank Sasha Garbali, Michael Wheeler, and Paul Zinn-Justin for their advice and guidance over the years. A.G. gratefully acknowledges Michael Wheeler for travel support and for generously sharing his knowledge of symmetric functions and lattice models. A.G. also thanks Kazakh-British Technical University for its hospitality during the visit, where part of this work was carried out.

\

\bibliographystyle{plain}
\bibliography{references}{}
\end{document}